\theoremstyle{plain}
\newtheorem{theorem}{Theorem}[section]
\newtheorem{lemma}[theorem]{Lemma}
\newtheorem{corollary}[theorem]{Corollary}
\newtheorem{prop}[theorem]{Proposition}
\theoremstyle{definition}
\newtheorem{definition}[theorem]{Definition}
\newtheorem{remark}[theorem]{Remark}
\newtheorem{example}[theorem]{Example}
\newtheorem*{lemma*}{Lemma}
\newtheorem*{theorem*}{Theorem}
\newtheorem*{proposition*}{Proposition}
\newtheorem*{corollary*}{Corollary}
\newtheorem*{definition*}{Definition}
\DeclareMathOperator{\SO}{SO}
\newcommand{\C}{\mathbb{C}}
\newcommand{\N}{\mathbb{N}}
\newcommand{\R}{\mathbb{R}}
\newcommand{\Z}{\mathbb{Z}}
\newcommand{\p}{\mathfrak{p}}
\renewcommand{\a}{\mathfrak{a}}
\newcommand{\cfct}{\mathbf{c}}
\newcommand{\what}[1]{\hat{#1}}
\DeclareRobustCommand\widecheck[1]{{\mathpalette\@widecheck{#1}}}
\def\@widecheck#1#2{%
    \setbox\z@\hbox{\m@th$#1#2$}%
    \setbox\tw@\hbox{\m@th$#1%
       \widehat{%
          \vrule\@width\z@\@height\ht\z@
          \vrule\@height\z@\@width\wd\z@}$}%
    \dp\tw@-\ht\z@
    \@tempdima\ht\z@ \advance\@tempdima2\ht\tw@ \divide\@tempdima\thr@@
    \setbox\tw@\hbox{%
       \raise\@tempdima\hbox{\scalebox{1}[-1]{\lower\@tempdima\box
\tw@}}}%
    {\ooalign{\box\tw@ \cr \box\z@}}}
\begin{document}

\title[Inequalities for the Heckman--Opdam transform]{Hardy--Littlewood inequalities for the Heckman--Opdam transform}
\author{Troels Roussau Johansen}
\address{ 
Mathematisches Seminar \\
Chr.-Albrechts--Universit\"at zu Kiel\\
Ludewig-Mey--Str. 4, DE-24098 Kiel\\
Germany
}

\email{johansen@math.uni-kiel.de}
\keywords{Hausdorff--Young, Hardy--Littlewood, interpolation, root system, hypergeometric Fourier transform}
\subjclass[2010]{Primary 33C67; secondary 43A15, 43A32, 43A90}
\begin{abstract}
We establish Hardy--Littlewood inequalities for the Heckman--Opdam transform associated to a general root datum $(\a,\Sigma,m)$ that generalizes an analogous result for the spherical Fourier transform on a Riemannian symmetric space of the non-compact type due to Eguchi and Kumahara. In particular we obtain a more precise Hausdorff--Young inequality that generalizes a recent result due to Narayanan, Pasquale, and Pusti.
\end{abstract}
\maketitle
\section{Introduction}
The classical Hausdorff--Young inequality $\|\what{f}\|_q\leq c_p\|f\|_p$, $1\leq p\leq 2$, $\frac{1}{p}+\frac{1}{q}=1$, for the Euclidean Fourier transform can be viewed as a partial extension of the Plancherel theorem to $L^p$-functions. More generally, the Fourier transform extends to a continuous mapping from $L^p(\R^n)$ into the Lorentz space $L^{p',p}(\R^n)$, a result that is due to Paley. A variation on this theme is provided by the Hardy--Littlewood inequality which may be stated as follows:  Let $f$ be a measurable function on $\R^n$ such that $x\mapsto f(x)\|x\|^{n(1-2/q)}$ belongs to $L^q(\R^n)$, where $q\geq 2$. Then $f$ has a well-defined Fourier transform $\what{f}$ in $L^q(\R^n)$ and there exists a positive constant $A_q$ independent of $f$ such that
\begin{equation}\label{HL-ineq1}
\Bigl(\int_{\R^n}\vert\what{f}(\xi)\vert^qd\xi\Bigr)^{1/q}\leq A_q\Bigl(\int_{\R^n}\vert f(x)\vert^q\|x\|^{n(q-2)}dx\Bigr)^{1/q}.
\end{equation}

By duality and general properties of the Fourier transform, one has the following equivalent formulation: For every $p\in(1,2)$ there exists a positive constant $B_p$ independent of $f$ such that
\begin{equation}\label{HL-ineq2}
\Bigl(\int_{\R^n}|\what{f}(\xi)|^p|\xi|^{n(p-2)}\,d\xi\Bigr)^{1/p}\leq B_p\Bigl(\int_{\R^n}|f(x)|^p\,dx\Bigr)^{1/p}.
\end{equation}

An analogue of \eqref{HL-ineq1} for the spherical transform on a Riemannian symmetric space $G/K$ was obtained by Eguchi and Kumahara in \cite[Theorem~1, Section~5]{Eguchi-Kumahara}:
\begin{theorem}\label{thm.1}
	Let $q\geq 2$. The spherical Fourier transform can be defined for $K$-invariant functions $f$ on $G/K$ with the property that $f\cdot\sigma^{n(1-2/q)}\Omega^{1-2/q}$ belongs to $L^q(K\setminus G/K)$, and there exists a positive constant $A_q$ that is independent of $f$ such that
	\begin{equation}\label{ineq-HL-GK}
	\Bigl(\frac{1}{\vert W\vert}\int_{\a^*}\vert\widetilde{f}(\lambda)\vert^q\,\vert\mathbf{c}(\lambda)\vert^{-2}\,d\lambda\Bigr)^{1/q} \leq A_q\Bigl(\int_G\vert f(x)\vert^q\sigma(x)^{n(q-2)}\Omega(x)^{q-2}\,dx\Bigr)^{1/q}
	\end{equation}
	for all $f\in\mathcal{S}(K\setminus G/K)$.
\end{theorem}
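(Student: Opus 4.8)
The plan is to read \eqref{ineq-HL-GK} as the boundedness of the spherical transform $\mathcal F\colon f\mapsto\widetilde f$ from the weighted space $L^q\bigl(\sigma^{n(q-2)}\Omega^{q-2}\,dx\bigr)$ into $L^q(\a^*,|\cfct(\lambda)|^{-2}\,d\lambda)$, and to reach the whole range $q\geq 2$ by real interpolation anchored at the two ends $q=2$ and $q=\infty$, followed by a conversion of a Lorentz norm into the stated weighted norm. At $q=2$ both exponents $n(1-2/q)$ and $1-2/q$ vanish, so \eqref{ineq-HL-GK} reduces to the Plancherel identity $\frac1{|W|}\int_{\a^*}|\widetilde f|^2|\cfct|^{-2}\,d\lambda=\int_G|f|^2\,dx$ and holds with constant $1$; this is the first endpoint. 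The second endpoint is the elementary bound $\|\widetilde f\|_\infty\leq\int_G|f|\,\varphi_0\,dx\leq\|f\|_{L^1(G)}$, which follows from $|\varphi_\lambda(x)|\leq\varphi_0(x)\leq 1$ for real $\lambda$. Thus $\mathcal F$ is of strong type $(2,2)$ against Plancherel measure and of strong type $(1,\infty)$.

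Next I would interpolate these two endpoints by the real (Marcinkiewicz--Hunt) method rather than by Riesz--Thorin, in order to retain the second Lorentz index. Off-diagonal interpolation between strong type $(1,\infty)$ and strong type $(2,2)$ yields, for $1<p<2$, the Paley-type mapping $\mathcal F\colon L^{p,s}(G)\to L^{p',s}(\a^*,|\cfct|^{-2}d\lambda)$ with the second index $s$ preserved on both sides. Specializing to $p=q'$ and $s=q$, and using $L^{q,q}(|\cfct|^{-2}d\lambda)=L^q(|\cfct|^{-2}d\lambda)$, gives
\begin{equation*}
\Bigl(\frac1{|W|}\int_{\a^*}|\widetilde f(\lambda)|^q|\cfct(\lambda)|^{-2}\,d\lambda\Bigr)^{1/q}\leq C_q\,\|f\|_{L^{q',q}(G)}.
\end{equation*}
This is the crux: it is the symmetric-space analogue of Paley's theorem, and it already carries the full strength of \eqref{ineq-HL-GK} in disguise. (Plain Stein--Weiss interpolation with change of measure between the same two endpoints would yield only the Hausdorff--Young inequality $\mathcal F\colon L^p\to L^{p'}(|\cfct|^{-2}d\lambda)$; it is precisely the Lorentz refinement that produces the sharper weighted statement.)

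The final step is to identify $\|f\|_{L^{q',q}(G)}$ with the right-hand side of \eqref{ineq-HL-GK}. Writing the Lorentz norm through the decreasing rearrangement of $f$ with respect to Haar measure, $\|f\|_{L^{q',q}(G)}^q\asymp\int_0^\infty f^*(s)^q\,s^{q-2}\,ds$ since $q/q'-1=q-2$, and changing variables $s=\vol(B(o,R))$ via the volume growth of $G/K$, one is led to a weight comparable to $\vol(B(o,\sigma(x)))^{q-2}$. The content of the definitions of $\sigma$ and $\Omega$ is exactly that $\vol(B(o,\sigma(x)))\asymp\sigma(x)^n\,\Omega(x)$, the factor $\sigma^n$ recording the Euclidean (dimension $n$) behaviour near the origin and $\Omega$ the non-Euclidean growth at infinity; this converts the displayed inequality into \eqref{ineq-HL-GK}, the domain of $\mathcal F$ being extended to all $f$ with $f\sigma^{n(1-2/q)}\Omega^{1-2/q}\in L^q$ by density of $\mathcal S(K\setminus G/K)$.

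The main obstacle is this last comparison rather than the interpolation itself. It requires sharp two-sided control of the volume density of $G/K$ and of the Harish-Chandra $\cfct$-function, so that the passage $s=\vol(B(o,R))$ and the identification $\vol(B(o,\sigma))\asymp\sigma^n\Omega$ are quantitatively correct. One must further be careful that weighted $L^q$ norms are not rearrangement invariant, so the equivalence between $\|f\|_{L^{q',q}}$ and the weighted integral has to be justified through the monotonicity of $\sigma^{n(q-2)}\Omega^{q-2}$ in $\sigma$ together with the Hardy--Littlewood rearrangement inequality. The polynomial growth of $|\cfct(\lambda)|^{-2}$, which replaces the homogeneous Lebesgue measure of the Euclidean model, is what makes both the endpoint bookkeeping and this weight comparison genuinely non-trivial in the present setting.
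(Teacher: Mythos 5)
Your endpoint bounds and the interpolation step are exactly the skeleton that the paper itself uses (it quotes Theorem \ref{thm.1} from Eguchi--Kumahara, but proves its root-system generalization, theorem \ref{thm.HY-qlarge}, this way): strong type $(2,2)$ from Plancherel, type $(1,\infty)$ from $|\varphi_\lambda|\leq 1$, and Lorentz-space (Hunt/Marcinkiewicz) interpolation yielding the Paley-type bound $\bigl(\frac{1}{|W|}\int_{\a^*}|\widetilde{f}|^q|\cfct(\lambda)|^{-2}d\lambda\bigr)^{1/q}\leq C_q\|f\|^*_{q',q}$. Up to that point your argument is correct and coincides with the paper's route.

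The genuine gap is in your final step. The identification $\vol\bigl(B(o,\sigma(x))\bigr)\asymp\sigma(x)^n\Omega(x)$ is false: the left-hand side depends only on $\sigma(x)$, whereas $\Omega$ depends on the direction of the Cartan component of $x$ and vanishes on the Weyl chamber walls (in rank $\geq 2$), so the two sides are not comparable; even in rank one, $\vol(B(o,R))\asymp\int_0^R\Omega(\exp t)\,dt\asymp e^{2\rho R}$ while $R^n\Omega(\exp R)\asymp R^n e^{2\rho R}$, so they already differ by unbounded polynomial factors. More fundamentally, no equivalence between $\|f\|^*_{q',q}$ and $\bigl(\int_G|f|^q\sigma^{n(q-2)}\Omega^{q-2}dx\bigr)^{1/q}$ can hold: the Lorentz norm is rearrangement invariant with respect to Haar measure and the weighted norm is not, so a function concentrated near a chamber wall (where $\Omega\approx 0$) and one with the same distribution supported in the interior of the chamber give equal left-hand sides but wildly different right-hand sides. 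Only the one-sided bound $\|f\|^*_{q',q}\lesssim\bigl\|f\,\sigma^{n(1-2/q)}\Omega^{1-2/q}\bigr\|_q$ is true, and it cannot follow from any pointwise weight or volume comparison; it is essentially equivalent to the sublevel-set estimate $\bigl|\{x\in G\,:\,\sigma(x)^n\Omega(x)\leq t\}\bigr|\leq Ct$, which is Eguchi--Kumahara's key lemma and which the paper axiomatizes as the ``Young function'' property (cf.\ lemma \ref{lemma-J} and the hyperbolic Young function). Given that estimate, the correct bridge is O'Neil's multiplicative inequality (theorem \ref{thm.Oneil}): write $f=(f\psi^{1-2/q})\cdot\psi^{2/q-1}$ with $\psi=\sigma^n\Omega$, observe that $\psi^{2/q-1}\in L^{q/(q-2),\infty}$ precisely by the sublevel estimate, and conclude $\|f\|^*_{q',q}\leq\|f\psi^{1-2/q}\|_q\,\|\psi^{2/q-1}\|^*_{q/(q-2),\infty}$. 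Your fallback via the reverse Hardy--Littlewood rearrangement inequality can also be made rigorous, but it requires exactly the same sublevel estimate (the weight is not a monotone function of $\sigma$, so ``monotonicity in $\sigma$'' is not available); either way, the missing ingredient is the sublevel-set lemma, not a ball-volume asymptotic.
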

Here $\sigma(x)=\left<X,X\right>^{1/2}$ where $\left<\cdot,\cdot\right>$ is the Cartan--Killing form and $G\ni x=k\exp X\in K\times\mathfrak{p}$, and $\Omega(\exp H)=c\prod_{\alpha\in\Sigma}\vert\sinh\alpha(H)\vert^{m(\alpha)}$, $H\in\mathfrak{a}$, the usual weight and $\mathcal{S}(K\setminus G/K)$ an $L^2$-based Schwartz space of $K$-invariant functions on $G/K$. An interpolation argument leads to an analogous statement for exponents below $2$:

\begin{theorem}\label{thm.2}
Let $p\in(1,2]$ and $\frac{1}{p}+\frac{1}{q}=1$. Let $r\in[p,q]$ and set $\mu=\frac{1}{r}+\frac{1}{q}-1=\frac{1}{r}-\frac{1}{p}$. Then there exists a positive constant $B_r$ independent of $f$ such that
\[
\Bigl(\frac{1}{|W|}\int_{\a^*}\vert\widetilde{f}(\lambda)\vert^q\vert\mathbf{c}(\lambda)\vert^{-2}\,d\lambda\Bigr)^{1/q} \leq B_r\Bigl(\int_G\vert f(x)\vert^r\sigma(x)^{-n\mu r}\Omega(x)^{-\mu r}\,dx\Bigr)^{1/r}\]
for all $f$ satisfying $f\cdot\sigma^{-n\mu}\Omega^{-\mu}\in L^r(K\setminus G/K)$.
\end{theorem}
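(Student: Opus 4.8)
The plan is to realize the stated inequality as a single complex interpolation between two endpoint estimates that share the fixed target space $Y = L^q\bigl(\a^*,\,|\mathbf{c}(\lambda)|^{-2}\,d\lambda/|W|\bigr)$. The first thing I would check is that the two ends $r=q$ and $r=p$ of the range $[p,q]$ are already in hand. For $r=q$ one computes $\mu=\tfrac2q-1$, so that $-n\mu r=n(q-2)$ and $-\mu r=q-2$, and the inequality becomes precisely the Hardy--Littlewood estimate \eqref{ineq-HL-GK} of Theorem~\ref{thm.1}. For $r=p$ one has $\mu=0$, the weights on the right collapse to $1$, and the inequality reduces to the Hausdorff--Young estimate
\[
\Bigl(\frac{1}{|W|}\int_{\a^*}|\widetilde{f}(\lambda)|^q\,|\mathbf{c}(\lambda)|^{-2}\,d\lambda\Bigr)^{1/q}\leq B_p\,\Bigl(\int_G|f(x)|^p\,dx\Bigr)^{1/p}
\]
for the spherical transform, itself obtainable by interpolating the elementary bound $\|\widetilde{f}\|_\infty\leq\|f\|_1$ (valid since the spherical functions $\varphi_\lambda$, $\lambda\in\a^*$, are uniformly bounded by $1$) against the Plancherel theorem. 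It therefore suffices to interpolate between these two estimates as $r$ runs through $[p,q]$.

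Concretely, I would regard the spherical transform $T\colon f\mapsto\widetilde{f}$, defined initially on the dense subspace $\mathcal S(K\setminus G/K)$, as one linear map carrying each of the two source spaces $X_0=L^p(G,dx)$ and $X_1=L^q\bigl(G,\sigma^{n(q-2)}\Omega^{q-2}\,dx\bigr)$ boundedly into $Y$, with norms $B_p$ and $A_q$ respectively. By the Stein--Weiss interpolation theorem for weighted $L^p$-spaces, for the value $\theta\in[0,1]$ determined by
\[
\frac1r=\frac{1-\theta}{p}+\frac{\theta}{q},
\]
the complex interpolation space $[X_0,X_1]_\theta$ coincides with $L^r(G,w_\theta\,dx)$, where $w_\theta=\bigl(\sigma^{n(q-2)}\Omega^{q-2}\bigr)^{r\theta/q}$, and $T$ maps it into $Y$ with norm at most $B_p^{1-\theta}A_q^{\theta}$.

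The decisive step is to verify that $w_\theta$ is exactly the weight $\sigma^{-n\mu r}\Omega^{-\mu r}$ demanded by the statement. From the defining relation for $\theta$ one reads off $\mu=\tfrac1r-\tfrac1p=\theta\bigl(\tfrac1q-\tfrac1p\bigr)$, whereas the conjugacy $\tfrac1p+\tfrac1q=1$ gives $\tfrac{q-2}{q}=1-\tfrac2q=-\bigl(\tfrac1q-\tfrac1p\bigr)$, and hence $\tfrac{(q-2)\theta}{q}=-\mu$. Substituting this into the exponents of $w_\theta$ turns them into $-n\mu r$ and $-\mu r$, so that $w_\theta=\sigma^{-n\mu r}\Omega^{-\mu r}$ as required; one then sets $B_r=B_p^{1-\theta}A_q^{\theta}$ and passes from $\mathcal S(K\setminus G/K)$ to the full admissible class by density, thereby defining $\widetilde{f}$ for every $f$ with $f\cdot\sigma^{-n\mu}\Omega^{-\mu}\in L^r(K\setminus G/K)$.

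The interpolation arithmetic is forced by the identity $\tfrac1p+\tfrac1q=1$ and presents no real difficulty; the hard part will be the two foundational points on which it rests. I would need to secure the Hausdorff--Young endpoint in exactly the normalization above, and --- more delicately --- to justify the Stein--Weiss machinery in this setting: that the weights $1$ and $\sigma^{n(q-2)}\Omega^{q-2}$ are admissible, that $\mathcal S(K\setminus G/K)$ is simultaneously dense in $X_0$, $X_1$ and every intermediate $L^r(G,w_\theta\,dx)$, and that $T$ is consistently defined on these spaces, so that the interpolated bound genuinely extends to all $f$ in the stated class rather than merely to the dense core.
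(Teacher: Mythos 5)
Your proof is correct and takes essentially the same route as the paper, which presents Theorem \ref{thm.2} as following from Theorem \ref{thm.1} by ``an interpolation argument'' (the one of Eguchi--Kumahara): namely Stein--Weiss interpolation with change of measures between the Hardy--Littlewood endpoint $r=q$ (Theorem \ref{thm.1}) and the Hausdorff--Young endpoint $r=p$, the latter obtained from $\|\widetilde{f}\|_\infty\leq\|f\|_1$ and Plancherel. Your exponent bookkeeping, in particular the identification $w_\theta=\sigma^{-n\mu r}\Omega^{-\mu r}$, is exactly right, so the argument is complete modulo the routine density and consistency points you already flag.
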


 It was remarked in the MathSciNet review by Michael Cowling that one could simplify the proof of Eguchi and Kumahara by means of more refined interpolation techniques. These  were later incorporated in \cite{Mohanty-II} where the authors established an analogue of \eqref{HL-ineq2} for the Helgason--Fourier transform on a noncompact Riemannian symmetric space of rank one: It holds that
\begin{equation}\label{eqn.MRSS}
\int_{\a^*} \|\widetilde{f}(\lambda,\cdot)\|_{L^1(K)}|\lambda|^{p-2}(1+|\lambda|)^{-(m_\gamma+m_{2\gamma})}|\mathbf{c}(\lambda)|^{-2}\,d\lambda\leq C\|f\|_p^p\end{equation}
for $1<p<2$. According to \cite[Remark~4.6]{Mohanty-II}, their method also works for higher rank spaces. While we share this sentiment, it turns out to be slightly involved to fill in the necessary details. One may also object that the appearance of the average over $K$ is not natural. A different version was recently obtained in \cite{Ray-Sarkar-trans}, to which we shall return later. A further drawback of \eqref{eqn.MRSS} is that for $p=2$ it does not resemble the Parseval identity, and section \ref{sec.HY-ineqs} opens with the observation that the analogue of \eqref{eqn.MRSS} for the Heckman--Opdam transform, or even just the Jacobi transform in rank one, does not hold for arbitrary non-negative root multiplicities. We also wish to emphasize a quantitative difference between \eqref{HL-ineq1} and \eqref{HL-ineq2}: In the first inequality a weight is introduced on the function-side, whereas the second inequality incorporates a weight on the Fourier transform side. Theorem \ref{thm.1} and theorem \ref{thm.2} therefore resemble \eqref{HL-ineq1}, whereas \eqref{eqn.MRSS} resembles  \eqref{HL-ineq2}.
\smallskip

It is the purpose of the present paper to obtain analogues of \eqref{HL-ineq1} and \eqref{HL-ineq2} for the Heckman--Opdam transform associated to a triple $(\a,\Sigma,m)$, where $\a$ is an Euclidean $n$-dimensional vector space, $\Sigma$ a root system in $\a^*$ and $m$ a positive multiplicity function.  In order to place the contributions of the present paper in perspective, the reader is reminded that some classical aspects of the $L^2$-theory for hypergeometric Fourier analysis in root systems (that is, Plancherel and Paley--Wiener theorems and an inversion formula) was already obtained in \cite{Opdam-acta}, whereas the $L^p$-analysis is  much more recent. As far as we can ascertain, the first decisive contribution was given in the recent publication \cite{Narayanan-Pasquale-Pusti}, and the results we obtain should be seen as natural contributions to the general theme of classical harmonic analysis in a root system framework,
\medskip

The details pertaining to harmonic analysis in root systems will be presented in section \ref{sec.root1}. There are several standard references but we follow closely the presentation in \cite{Narayanan-Pasquale-Pusti} as far as the Heckman--Opdam theory is concerned. Section \ref{sec.root1} also summarizes the interpolation theorems for Lorentz spaces. An immediate consequence is a generalized Hausdorff--Young inequality of Paley-type. Section \ref{sec.HY-ineqs} presents several versions of the Hardy--Littlewood inequality for the Heckman--Opdam transforms, corresponding to different weights. The last section briefly outlines a generalization of the Eguchi--Kumahara result for the Cartan motion groups. On can introduce a  `flat' Heckman--Opdam transform $\mathcal{F}_0$ in analogy with generalized Bessel transform on the flat space $G_0/K$, and we obtain Hardy--Littlewood inequalities for $\mathcal{F}_0$ as well. This involves generalized Bessel-type functions associated with root systems that were already considered by Opdam in \cite[Section~6]{Opdam-Bessel}. The connection to spherical functions on the Cartan motion group was explicitly indicated in \cite[Remark~6.12]{Opdam-Bessel} and later established formally in \cite{deJeu-PW}.

\section{Harmonic analysis in root systems}\label{sec.root1}
Let $\a$ be an $n$-dimensional real Euclidean vector space with inner product $\langle\cdot,\cdot\rangle$ and let $\a^*$ denote the linear dual of $\a$. For $\lambda\in\a^*$ let $x_\lambda$ be the unique vector in $\a$ such that $\lambda(x)=\langle x,x_\lambda\rangle$ for every $x\in\a$. Define an inner product  on $\a^*$ via $\langle \lambda,\mu\rangle=\langle x_\lambda,x_\mu\rangle$, and let $\a_\C$ and $\a_\C^*$ denote the complexifications of $\a$ and $\a^*$. The inner products on $\a$ and $\a^*$ extend by $\C$-linearity to inner products on $\a_\C$ and $\a_\C^*$ that will denoted by the same symbol. Set $\lambda_\alpha=\frac{\langle\lambda,\alpha\rangle}{\langle\alpha,\alpha\rangle}$ and $|x|=\langle x,x\rangle^{1/2}$ for $x\in\a$. 

Let $\Sigma$ be a root system in $\a^*$ and let $W$ denote the associated Weyl group generated  by the root reflections $r_\alpha:\lambda\mapsto\lambda -2\lambda_\alpha\alpha$ for $\alpha\in\Sigma$. Fix a compatible set $\Sigma^+$ of positive roots in $\Sigma$ and let $\Pi=
\{\alpha_1,\ldots,\alpha_n\}\subset\Sigma^+$ be the associated set of simple roots. Let $\Sigma_0$ denote the set of roots in $\Sigma$ that are indivisible, in the sense that if $\alpha$ belongs to $\Sigma_0$, then $\alpha/2$ is not a root. A (strictly) positive multiplicity function is a $W$-invariant function $m:\Sigma\to(0,\infty)$. We often write $m_\alpha=m(\alpha)$. By $W$-invariance it holds that $m_{w\alpha}=m_\alpha$ for all $\alpha\in\Sigma$ and $w\in W$. We adhere to the conventions in \cite{Narayanan-Pasquale-Pusti} rather than Heckman and Opdam: Their root system $\mathfrak{R}$ and multiplicity function $k$ are related to $\Sigma$ and $m$ above by the identities $\mathfrak{R}=\{2\alpha\,:\,\alpha\in\Sigma\}$, $k_{2\alpha}=m_\alpha/2$ for $\alpha\in\Sigma$. Set $\Sigma_0^+=\Sigma^+\cap\Sigma_0$. The complexification $\a_\C$ of $\a$ may be viewed as the Lie algebra of the complex torus $A_\C=\a_\C/\{2\pi ix_\alpha/\langle\alpha,\alpha\rangle\,:\,\alpha\in\Sigma\}\Z$. Let $\exp:\a_\C\to A_\C$ be the exponential map. The real form $A=\exp\a$ of $A_\C$ is an abelian subgroup of $A_\C$ with Lie algebra $\a$ such that $\exp:\a\to A$ is a diffeomorphism, by means of which we shall often identify $\a$ with $A$. The $W$-action extends to $\a$ by duality, and to $\a_\C^*$ and $\a_\C$ by $\C$-linearity. Moreover $W$ acts via the left regular representation of functions on either one of these. The positive Weyl chamber $\a^+$ is defined as the set of elements $x\in\a$ for which $\alpha(x)>0$ for all $\alpha\in\Sigma^+$.

Let $P=\{\lambda\in\a^*\,:\,\lambda_\alpha\in\Z\text{ for all }\alpha\in\Sigma\}$ denote the  restricted weight lattice. To $\lambda\in P$ is associated the single-valued exponential $e^\lambda:A_\C\to\C$ given by $e^\lambda(h)=e^{\lambda(\log h)}$, and these are the characters of $A_\C$. It can be seen that $\mathrm{span}_\C\{e^\lambda\}$ is isomorphic to the ring $\C[A_\C]$ of regular functions on $A_\C$, the latter viewed as an algebraic variety, and $W$ acts on it by $w(e^\lambda):=e^{w\lambda}$ (which is well-defined since $P$ is $W$-invariant). The set of regular points for the $W$-action on $A_\C$ coincides with the set $A_\C^{\mathrm{reg}}=\{h\in\C\,:\,e^{2\alpha(\log h)}\neq 1\text{ for all }\alpha\in\Sigma\}$, and the  algebra $\C[A_\C^{\mathrm{reg}}]$ of regular functions on $A_\C^{\mathrm{reg}}$ is the subalgebra of the quotient field of $\C[A_\C]$ generated by $\C[A_\C]$ and $\{\frac{1}{1-e^{-2\alpha}}\,:\,\alpha\in\Sigma^+\}$. We denote by $\C[A_\C^{\mathrm{reg}}]^W$ the subalgebra of $W$-invariant elements.

Let $S(\a_\C)$ denote the symmetric algebra over $\a_\C$ and let $S(\a_\C)^W$ denote the subalgebra of its $W$-invariant elements. An element $p\in S(\a_\C)$ gives rise to a constant coefficient differential operator $\partial(p)$ acting on functions $f$ on $A_\C$ such that $\partial(x)$ is the directional derivative in the direction of $x$ for every $x\in\a$. We shall denote the algebra $\{\partial(p)\,:\,p\in S(\a_\C)\}$ by the symbol $S(\a_\C)$, which is justified since $p\mapsto\partial(p)$ is an algebra isomorphism. Let $\mathbb{D}(A_\C^{\mathrm{reg}})=\C[A_\C^{\mathrm{reg}}]\otimes S(\a_\C)$ denote the algebra of differential operators on $A_\C$ with coefficients in $\C[A_\C^{\mathrm{reg}}]$ and $\mathbb{D}(A_\C^{\mathrm{reg}})^W$ its subalgebra of $W$-invariant elements, where $W$ acts by $w(\varphi\otimes\partial(p))=(w\varphi)\otimes\partial(wp)$. 

We introduce an associative algebra structure on $\mathbb{D}(A_\C^{\mathrm{reg}})\otimes\C[W]$ via $(D_1\otimes w_1)\cdot(D_2\otimes w_2)=D_1w_1(D_2)\otimes w_1w_2$, where $(wD)(wf):=w(Df)$. Elements of $\mathbb{D}(A_\C^{\mathrm{reg}})\otimes\C[W]$ are the differential-reflection operators on $A_\C^{\mathrm{reg}}$, and they act on functions $f$ on $A_\C^{\mathrm{reg}}$ by $(D\otimes w)f=D(wf)$.
\begin{definition}
The \emph{Cherednik operator} $T_x\in\mathbb{D}(A_\C^{\mathrm{reg}})\otimes\C[W]$ associated with $x\in\a$ is defined by 
\[T_x=\partial_x-\rho(x)+\sum_{\alpha\in\Sigma^+}m_\alpha\alpha(x)(1-e^{-2\alpha})^{-1}\otimes(1-r_\alpha),\]
where $2\rho=\sum_{\alpha\in\Sigma^+}m_\alpha\alpha\in\a^*$.
\end{definition}
It is a deep result that the operators $T_x$ commute, an important consequence of which is that the map $x\mapsto T_x$ extends uniquely to an algebra homomorphism $p\mapsto T_p$ of $S(\a_\C)$ into $\mathbb{D}(A_\C^{\mathrm{reg}})\otimes\C[W]$. 

Define $\Upsilon:\mathbb{D}(A_\C^{\mathrm{reg}})\otimes\C[W]\to\mathbb{D}(A_\C^{\mathrm{reg}})$ by $\Upsilon(\sum_j D_j\otimes w_j)=\sum_jD_j$. Then $\Upsilon(P)f=Pf$ for every $P\in\mathbb{D}(A_\C^{\mathrm{reg}})\otimes\C[W]$ and every $W$-invariant function $f$ on $A_\C^{\mathrm{reg}}$. In particular we can define $D_p:=\Upsilon(T_p)$ for every $p\in S(\a_\C)$.  One can show that $D_p$ belongs to $\mathbb{D}(A_\C^{\mathrm{reg}})^W$ whenever $p\in S(\a_\C)^W$, and that $\mathbb{D}:=\{D_p\,:\,p\in S(\a_\C)^W\}$ is a commutative subalgebra of $\mathbb{D}(A_\C^{\mathrm{reg}})^W$, see \cite[p.~232]{Narayanan-Pasquale-Pusti} for details. In the geometric case where $(\a,\Sigma,m)$ corresponds to the root datum of a Riemannian symmetric space, $\mathbb{D}$ is the algebra of radial components along $A$ of $G$-invariant differential operators on $G/K$. The analogue of the radial component of the Laplace--Beltrami operator is the operator $D_{p_L}$, where $p_L\in S(\a_\C)^W$ is the polynomial $p_L(\lambda)=\langle\lambda,\lambda\rangle$. Then $D_{p_L}=L+\langle\rho,\rho\rangle$ where 
\[L=L_\a+\sum_{\alpha\in\Sigma^+}m_\alpha\coth \alpha \partial(x_\alpha);\]
here $L_\a$ is the Euclidean Laplace operator on $\a$, and $\coth \alpha=\frac{1+e^{-2\alpha}}{1-e^{-2\alpha}}$.

\begin{definition}
Let $\lambda\in\a_\C^*$ be fixed. The \emph{hypergeometric function with spectral parameter} $\lambda\in\a_\C^*$ is the unique analytic $W$-invariant function $\varphi_\lambda$ on $\a$ that satisfies the system of differential equations
\[D_p\varphi = p(\lambda)\varphi,\quad p\in S(\a_\C)^W\]
and is normalized by $\varphi_\lambda(0)=1$.
\end{definition}
\begin{example}[The rank one case]
In the case $n=1$, $\Sigma^+$ consists of at most two elements, $\alpha$ and $2\alpha$. Identify $\a$ and $\a^*$ with $\R$ by setting $x_\alpha/2\equiv 1$ and $\alpha\equiv 1$. Then $\a_+\simeq (0,\infty)$, and $W=\{-1,1\}$ acts on $\R$ and $\C$ by multiplication. The algebra $\mathbb{D}$ is generated by a single element, for example the operator $D_{\rho_L}=L+\rho^2$, where $\rho=m_\alpha/2+m_{2\alpha}$. The hypergeometric system of differential equations used to define $\varphi$ reduces to the sdifferential equation
\[\frac{d^2\varphi}{dz^2}+(m_\alpha\coth z+m_{2\alpha}\coth(2z))\frac{d\varphi}{dz}=(\lambda^2-\rho^2)\varphi\]
which may be transformed into the hypergeometric differential equation
\[\zeta(1-\zeta)\frac{d^2\psi}{d\zeta^2}+(c-(1+a+b)\zeta)\frac{d\psi}{d\zeta}-ab\zeta=0\]
where $\zeta=\frac{1}{2}(1-\cosh z)$, $a=\frac{\lambda+\rho}{2}$, $b=\frac{-\lambda+\rho}{2}$, and $c=\frac{m_\alpha+m_{2\alpha}+1}{2}$. The solution $\varphi_\lambda$ is therefore  the Jacobi functions 
\[\varphi_\lambda(t)={_2}F_1\Bigl(\frac{m_\alpha/2+m_{2\alpha}+\lambda}{2},\frac{m_\alpha/2+m_{2\alpha}-\lambda}{2};\frac{m_\alpha+m_{2\alpha}+1}{2};-\sinh^2t\Bigr)\]
which are well known to describe the elementary spherical functions on a rank one Riemannian symmetric space $G/K$.
\end{example}

Existence, uniqueness and regularity properties of $\varphi_\lambda$ were investigated in several publications of Heckman and Opdam, later sharpened by Schapira \cite{Schapira} (where the functions are denoted $F_\lambda$ and their non-symmetric versions $G_\lambda$) and most recently by Narayanan, Pasquale, and Pusti \cite{Narayanan-Pasquale-Pusti}. Since we do not need to estimate the functions $\varphi_\lambda$ and the associated Harish-Chandra series expansions in the results that follow, we merely refer the reader to \cite[Sections~2--4]{Narayanan-Pasquale-Pusti} for the details.

\begin{definition}
The \emph{Heckman--Opdam} transform of a function $f\in C_c^\infty(\a)^W$ is defined by
\[\mathcal{F}f(\lambda)=\int_\a f(x)\varphi_\lambda(x)\,d\mu(x).\]
\end{definition}
Often $\mathcal{F}$ is called the hypergeometric Fourier transform, since the functions $\varphi_\lambda$ can be seen as generalized hypergeometric functions. In some literature, such as \cite{Schapira}, these are denoted by $F_\lambda$. Their non-symmetric version appear as $G_\lambda$, in terms of which one can define a hypergeometric Fourier transform of a function $f\in C_c^\infty(\a)$ that is not necessarily $W$-invariant. The terminology `hypergeometric' was certainly used by Delorme in \cite{Delorme-hyper} but the transform itself was studied earlier by Cherednik from a different perspective, and by Opdam in \cite{Opdam-acta}. When $\mathcal{F}$ acts on functions that might not be $W$-invariant, we therefore talk about the Cherednik--Opdam transform or the hypergeometric transform. Its restriction to $W$-invariant functions is the transform that Heckman and Opdam studied, hence the name. A convenient analogy is to think of the Heckman--Opdam transform as a root space generalization of the spherical Fourier transform associated with a Riemannian symmetric space. The more general Cherednik--Opdam transform is not related to the Helgason--Fourier transform on a symmetric space, however.
\begin{definition}
	The $\cfct$-function associated with $(\a,\Sigma,m)$ is defined by
	\[\cfct(\lambda)=c\prod_{\alpha\in\Sigma_0^+}\cfct_\alpha(\lambda),\quad \text{with}\quad \cfct_\alpha(\lambda)=\frac{2^{-\lambda_\alpha}}{\Gamma(\frac{\lambda_\alpha}{2}+\frac{m_\alpha}{4}+\frac{1}{2})\Gamma(\frac{\lambda_\alpha}{2}+\frac{m_\alpha}{4}+\frac{m_{2\alpha}}{2})},\]
	where $c$ is a normalizing constant that is chosen so that $\cfct(\rho)=1$.
\end{definition}
It is known that $|\cfct_\alpha(\lambda)|^{-2}\asymp|\!\left<\lambda,\alpha\right>\!|(1+\|\lambda\|)^{m_\alpha+m_{2\alpha}-2}$, so by the Cauchy--Schwarz inequality in $\a$,
\begin{equation}\label{eqn.c-est1}
|\cfct_\alpha(\lambda)|^{-2}\asymp \begin{cases} \|\lambda\|^{2}(1+\|\lambda\|)^{m_\alpha+m_{2\alpha}-2}&\text{for }\lambda\in i\a^*\text{ with } \|\lambda\| \text{ large},\\
\|\lambda\|^{2}&\text{for } \lambda\in i\a^*\text{ with } \|\lambda\|\lesssim 1\end{cases}.
\end{equation}
In particular,
\[|\cfct(\lambda)|^{-2}\asymp \prod_{\alpha\in\Sigma_0^+}|\!\left<\lambda,\alpha\right>\!|^2(1+|\!\left<\lambda,\alpha\right>\!|)^{m_\alpha+m_{2\alpha}-2} \asymp \|\lambda\|^{2|\Sigma_0^+|}(1+\|\lambda\|)^{\beta-2|\Sigma_0^+|},\]
where $\beta=\sum_{\alpha\in\Sigma_0^+}(m_\alpha+m_{2\alpha})$ and where $|\Sigma_0^+|$ is the cardinality of $\Sigma_0^+$. 

Let $dx$ denote a fixed normalization of the Haar measure on the abelian group $\a$, and associate to $(\a,\Sigma,m)$ the weighted measure $d\mu(x)=J(x)\,dx$ on $\a$, where
\[J(x)=\prod_{\alpha\in\Sigma^+}|e^{\alpha(x)}-e^{-\alpha(x)}|^{m_\alpha}.\]

It is known, cf.  \cite[Theorem~1.13]{Narayanan-Pasquale-Pusti} and the references to the literature, that there exists a suitable normalization of the measure $d\lambda$ on $i\a^*$ such that the transform $\mathcal{F}$ extends to an isometric isomorphism from $L^2(\a,d\mu)^W$ onto $L^2(i\a^*,|\cfct(\lambda)|^{-2}d\lambda)^W$. Moveover, for $f\in C_c^\infty(\a)^W$,
\[f(x)=\int_{i\a^*}\mathcal{F}f(\lambda)\varphi_{-\lambda}(x)|\cfct(\lambda)|^{-2}\,d\lambda\]
for all $x\in\a$.

\begin{definition}
For $p\in(0,2]$, set $\epsilon_p=\frac{2}{p}-1$. Let $C(\epsilon_p)$ be the convex hull in $\a^*$ of the set $\{\epsilon_p w\rho\,:\,w\in W\}$, and let $\a^*_{\epsilon_p}=C(\epsilon_p\rho)+i\a^*$.
\end{definition}

The following two versions of the Hausdorff--Young theorem for the Heckman--Opdam transform were recently established by Narayanan, Pasquale, and Pusti, cf. \cite[Lemma~5.2, Lemma~5.3]{Narayanan-Pasquale-Pusti}. The first proof involves Riesz--Thorin interpolation, whereas the second uses interpolation with an analytic family of operators.
\begin{lemma} 
Let $f\in L^p(A,d\mu)^W$. Then the following properties hold.
\begin{enumerate}[label=(\alph*)]
\item The hypergeometric transform $\mathcal{F}f(\lambda)$ is well defined for all $\lambda$ in the interior of $\a^*_{\epsilon_p}$ and defines a holomorphic function.
\item 	Let $p,q$ be so that $1<p<2$ and $\frac{1}{p}+\frac{1}{q}=1$. Then there exists a positive constant $c_p$ independent of $f$ so that 
\begin{equation}\label{ineq-HY-root-1}
\|\mathcal{F}f\|_q = \Bigl(\int_{i\a^*}|\mathcal{F}f(\lambda)|^q\,d\nu(\lambda)\Bigr)^{1/q}\leq c_p\|f\|_p.
\end{equation}
\end{enumerate}
\end{lemma}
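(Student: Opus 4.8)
The plan is to settle part (b) by Riesz--Thorin interpolation between the Plancherel isometry and an elementary $L^1$--$L^\infty$ bound, and to settle part (a) by Hölder's inequality combined with the integrability of the hypergeometric functions over the tube $\a^*_{\epsilon_p}$. Throughout I would first work on the dense subspace $C_c^\infty(\a)^W$, where $\mathcal{F}f$ is unambiguously given by the defining integral, and extend to $L^p(A,d\mu)^W$ by density at the end.

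For part (a), fix $\lambda$ in the interior of $\a^*_{\epsilon_p}$, so that $\mathrm{Re}\,\lambda$ lies in the interior of $C(\epsilon_p\rho)$. Hölder's inequality with exponents $p$ and $q$ gives $|\mathcal{F}f(\lambda)|\le \|f\|_p\,\|\varphi_\lambda\|_{L^q(\a,d\mu)}$, so everything rests on the claim that $\varphi_\lambda\in L^q(\a,d\mu)$ with a bound that is locally uniform in $\lambda$. This is where the (cited) estimates for the hypergeometric functions enter: combining the pointwise majorization $|\varphi_\lambda(x)|\le \varphi_{\mathrm{Re}\,\lambda}(x)$ with the Harish-Chandra--type asymptotics one has $\varphi_{\mathrm{Re}\,\lambda}(x)\lesssim (1+|x|)^{d}\,e^{(\max_{w\in W} w\mathrm{Re}\,\lambda-\rho)(x)}$ for $x\in\a^+$, and since the density $J$ grows like $e^{2\rho(x)}$ on the chamber, the product $\varphi_{\mathrm{Re}\,\lambda}^q\,J$ is integrable precisely when $\max_{w\in W} w\mathrm{Re}\,\lambda<\epsilon_p\rho$, i.e. precisely when $\mathrm{Re}\,\lambda$ lies in the interior of $C(\epsilon_p\rho)$; here one uses $\epsilon_p=1-2/q$. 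Holomorphy of $\lambda\mapsto\mathcal{F}f(\lambda)$ then follows by differentiating under the integral sign: for each fixed $x$ the map $\lambda\mapsto\varphi_\lambda(x)$ is holomorphic on $\a_\C^*$, and the bounds just described are locally uniform in $\lambda$, so Morera's theorem (or dominated convergence applied to difference quotients) applies.

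For part (b) I would interpolate between the endpoints $p=1$ and $p=2$, keeping the target contour fixed at $i\a^*$ with the Plancherel measure $d\nu(\lambda)=|\cfct(\lambda)|^{-2}\,d\lambda$. At $p=2$ the Plancherel theorem quoted above asserts that $\mathcal{F}$ is an isometry $L^2(\a,d\mu)^W\to L^2(i\a^*,d\nu)^W$, which is the endpoint bound $\|\mathcal{F}f\|_{L^2(d\nu)}=\|f\|_2$. At the other endpoint, for $\lambda\in i\a^*$ one has $|\varphi_\lambda(x)|\le \varphi_0(x)\le 1$ for every $x\in\a$, whence
\[
\|\mathcal{F}f\|_{L^\infty(i\a^*,d\nu)}\le \sup_{\lambda\in i\a^*}\int_\a |f(x)|\,|\varphi_\lambda(x)|\,d\mu(x)\le \|f\|_{L^1(\a,d\mu)}.
\]
Riesz--Thorin interpolation for the linear operator $f\mapsto \mathcal{F}f|_{i\a^*}$, acting between the fixed measure spaces $(\a,d\mu)$ and $(i\a^*,d\nu)$, then yields boundedness $L^{p}\to L^{q}$ with $\frac1p=1-\frac\theta2$ and $\frac1q=\frac\theta2$ for $\theta\in(0,1)$; these exponents satisfy $\frac1p+\frac1q=1$ and range over all $p\in(1,2)$, which is exactly \eqref{ineq-HY-root-1}. (The alternative ``analytic family'' argument shifts the contour analytically and would instead reproduce the bound on the translated tube, but for the statement on $i\a^*$ the real interpolation above suffices.)

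The main obstacle is not the interpolation, which is routine once the endpoints are in place, but the control of the hypergeometric functions $\varphi_\lambda$: the global estimate $|\varphi_\lambda(x)|\le 1$ on $i\a^*$ needed for the $L^1$--$L^\infty$ endpoint and, more delicately, the sharp integrability threshold for $\varphi_\lambda\in L^q(\a,d\mu)$ that pins down the tube $\a^*_{\epsilon_p}$ in part (a). Since the present paper explicitly refrains from reproving these estimates, I would invoke them from \cite{Narayanan-Pasquale-Pusti} and \cite{Schapira}; with those in hand the argument above is complete.
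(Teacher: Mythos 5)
Your proposal is correct and matches the approach behind this lemma: the paper does not reprove it but cites it from \cite{Narayanan-Pasquale-Pusti} (Lemmas~5.2--5.3), explicitly noting that the proof goes by Riesz--Thorin interpolation, which is exactly your part~(b) argument (Plancherel at $p=2$, the bound $|\varphi_\lambda(x)|\le 1$ on $i\a^*$ at $p=1$), while your part~(a) via H\"older and the sharp $L^q$-integrability of $\varphi_\lambda$ on the tube $\a^*_{\epsilon_p}$ is likewise the cited argument, resting on the same estimates for $\varphi_\lambda$ from \cite{Narayanan-Pasquale-Pusti} and \cite{Schapira} that the paper itself invokes without proof.
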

A more precise formulation is given as follows. Let $f\in L^1(\a,d\mu)^W\cap L^2(\a,d\mu)^W$. If $1\leq p\leq 2$, $q=\frac{p}{p-1}$, then $\|\mathcal{F}f\|_{q}\leq c_p\|f\|_p$. Since $L^p$ is dense in $L^1\cap L^2$ for $1\leq p\leq 2$, the Heckman--Opdam transform $\mathscr{F}_pf$ can be defined uniquely for all $f\in L^p(\a,d\mu)^W$, $1\leq p\leq 2$, so that $\mathscr{F}_p:L^p(\a,d\mu)^W\to L^q(i\a^*,d\nu)^W$ is a linear contraction with $\mathscr{F}_pf=\mathcal{F}f$ for all $f\in L^1(\a,d\mu)^W\cap L^2(\a,d\mu)^W$. It is known from \cite[Theorem~5.4]{Narayanan-Pasquale-Pusti} that $\mathscr{F}_p$ is injective on $L^p(\a,d\mu)^W$ whenever $p\in[1,2]$. 

\begin{lemma}\label{lemma.NPP-2}
Let $f\in L^p(\a,d\mu)^W$ for some $p\in(1,2)$ and let $\eta$ be in the interior of $C(\epsilon_p\rho)$. Then the following properties hold:
\begin{enumerate}[label=(\alph*)]
\item Assume $\frac{1}{p}+\frac{1}{q}=1$. There exists a positive constant $C_{p,\eta}$ such that for all $f\in L^p(\a,d\mu)^W$,
\begin{equation}\label{ineq-HY-root-2}
\Bigl(\int_{i\a^*}|\mathcal{F}f(\lambda+\eta)|^q|\cfct(\lambda)|^{-2}\,d\lambda\Bigr)^{1/q}\leq C_{p,\eta}\|f\|_p.
\end{equation}
\item It holds that $\sup_{\lambda\in i\a^*}|\mathcal{F}f(\lambda+\eta)|\leq C_{p,\eta}\|f\|_p$ and 
\[\lim_{\lambda\in\a^*_{\epsilon_p},|\Im\lambda|\to\infty} |\mathcal{F}f(\lambda)|=0.\]
\end{enumerate}
\end{lemma}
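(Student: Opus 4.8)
My plan is to establish the pointwise bound in (b) first and then feed it into the weighted estimate (a).

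For (b) I would start from $\mathcal{F}f(\lambda+\eta)=\int_\a f(x)\varphi_{\lambda+\eta}(x)\,d\mu(x)$ and apply Hölder's inequality with exponents $p$ and $q$, which gives $|\mathcal{F}f(\lambda+\eta)|\le\|f\|_p\,\|\varphi_{\lambda+\eta}\|_{L^q(d\mu)}$. Everything then reduces to the uniform bound $\sup_{\nu\in\a^*}\|\varphi_{\eta+i\nu}\|_{L^q(d\mu)}<\infty$. Using the standard estimates for the hypergeometric functions (so that $|\varphi_{\eta+i\nu}(x)|$ is dominated, up to a polynomial factor in $|x|$, by the leading Harish--Chandra exponential $e^{(\eta-\rho)(H)}$ for $x=\exp H$ with $H\in\a^+$) together with $d\mu(x)=J(x)\,dx\asymp e^{2\rho(H)}\,dx$, the integral $\int_{\a^+}\varphi_\eta(x)^q\,d\mu(x)$ is governed by $\int_{\a^+}e^{(q\eta-(q-2)\rho)(H)}\,dH$, which converges exactly when $\eta<(1-2/q)\rho=\epsilon_p\rho$ on the chamber, i.e. when $\eta$ lies in the interior of $C(\epsilon_p\rho)$. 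This is precisely the hypothesis, and it yields the first inequality of (b). The vanishing $\lim_{|\Im\lambda|\to\infty}|\mathcal{F}f(\lambda)|=0$ I would obtain as a Riemann--Lebesgue statement: it is clear for $f\in C_c^\infty(\a)^W$ by exploiting the oscillation of $\varphi_\lambda$ (integrating by parts against the Cherednik operators), and it passes to general $f$ by density using the uniform bound just proved.

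For (a) write $\eta=\epsilon_p\xi$ with $\xi$ in the interior of $C(\rho)$, which is legitimate since $C(\epsilon_p\rho)=\epsilon_p\,C(\rho)$. The natural tool is Stein's theorem for an analytic family of operators: on the strip $0\le\Re z\le1$ set $T_zf(\lambda)=|\cfct(\lambda)|^{-(1-z)}\mathcal{F}f(\lambda+z\xi)$, which is holomorphic in $z$ (the scalar factor equals $e^{-(1-z)\log|\cfct(\lambda)|}$ and $\lambda+z\xi$ stays in the tube). One checks that $z=\theta:=\epsilon_p$ gives $|T_\theta f(\lambda)|^q=|\cfct(\lambda)|^{-2}|\mathcal{F}f(\lambda+\eta)|^q$, because $q(1-\epsilon_p)=2$; so $\|T_\theta f\|_{L^q(d\lambda)}^q$ is exactly the quantity to be bounded. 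The two boundary estimates one wants are the Plancherel isometry on $\Re z=0$, namely $\|T_{it}f\|_{L^2(d\lambda)}^2=\int_{i\a^*}|\cfct(\lambda)|^{-2}|\mathcal{F}f(\lambda+it\xi)|^2\,d\lambda$, and the $L^1$--$L^\infty$ bound on $\Re z=1$, namely $\sup_\lambda|\mathcal{F}f(\lambda+\xi+it\xi)|\le C\|f\|_1$, the latter being exactly the uniform boundedness of $\varphi_\mu$ for $\Re\mu=\xi\in C(\rho)$. Interpolation would then give $\|T_\theta f\|_{L^q}\le C\|f\|_p$, first for $f\in C_c^\infty(\a)^W$ and then by density.

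The step I expect to be the main obstacle is the Plancherel endpoint, and it is genuinely delicate because of the degeneracy of the weight $|\cfct(\lambda)|^{-2}$ at $\lambda=0$. Substituting $\mu=\lambda+it\xi$ turns the $\Re z=0$ estimate into $\int|\cfct(\mu-it\xi)|^{-2}|\mathcal{F}f(\mu)|^2\,d\mu$, and near $\mu=0$ the translated weight stays positive while $|\cfct(\mu)|^{-2}\asymp\|\mu\|^{2|\Sigma_0^+|}$ vanishes; thus the line operator is not bounded on all of $L^2$ and the interpolation cannot be run verbatim. I would circumvent this by localizing in $\lambda$. On $\{\|\lambda\|\le1\}$ the weight $|\cfct(\lambda)|^{-2}\lesssim\|\lambda\|^{2|\Sigma_0^+|}$ is bounded and integrable, so the sup-bound from (b) already gives $\int_{\|\lambda\|\le1}|\mathcal{F}f(\lambda+\eta)|^q|\cfct(\lambda)|^{-2}\,d\lambda\lesssim\|f\|_p^q$. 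On $\{\|\lambda\|>1\}$ one has $|\cfct(\lambda)|^{-2}\asymp|\cfct(\lambda+\eta)|^{-2}\asymp\|\lambda\|^{\beta}$, so the estimate collapses to the Hausdorff--Young inequality on the shifted contour $\eta+i\a^*$ away from the origin, where the Plancherel weight no longer degenerates and the analytic-family argument (or, equivalently, a Stein--Weiss weighted interpolation on the fixed contour) goes through. Reassembling the two regimes with constants independent of $f$, and verifying the subexponential growth of the line norms demanded by Stein's theorem, is the part that requires care.
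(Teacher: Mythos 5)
First, for calibration: the paper itself contains no proof of this lemma. It is quoted from Narayanan--Pasquale--Pusti (their Lemmas~5.2 and 5.3), the only indication of method being that the proof ``uses interpolation with an analytic family of operators'' together with the boundedness criterion $|\varphi_\lambda(x)|\leq 1$ for $\lambda\in C(\rho)+i\a^*$. Your plan follows the same general strategy, and your part (b) is essentially sound: H\"older reduces the sup-bound to $\sup_{\nu\in\a^*}\|\varphi_{\eta+i\nu}\|_{L^q(d\mu)}<\infty$, your exponent bookkeeping is correct ($q\epsilon_p=q-2$, so the $\rho$-contributions cancel exactly and interiority of $\eta$ in $C(\epsilon_p\rho)$ leaves an $e^{-\delta\|x\|}$ margin), and the Riemann--Lebesgue statement follows from $p_L(\lambda)\mathcal{F}f(\lambda)=\mathcal{F}(D_{p_L}f)(\lambda)$ on $C_c^\infty(\a)^W$ plus density.

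The genuine gap is in (a), at exactly the point you flagged, and your repair does not close it. Stein's theorem requires, for \emph{every} $t\in\R$ (with constants of admissible growth), the boundary bound
\[
\int_R|\cfct(\lambda)|^{-2}\,|\mathcal{F}f(\lambda+it\xi)|^2\,d\lambda\;\leq\;M_0(t)^2\,\|f\|_2^2
\]
on whatever target region $R\subseteq i\a^*$ the family acts. Restricting to $R=\{\|\lambda\|>1\}$ does not remove the obstruction, because the vertical translations built into the family drag $R$ across the origin: substituting $\mu=\lambda+it\xi$ turns the left side into $\int_{\{\|\mu-it\xi\|>1\}}|\cfct(\mu-it\xi)|^{-2}|\mathcal{F}f(\mu)|^2\,d\mu$, and once $\|t\xi\|=2$, say, this region contains the ball $\{\|\mu\|\leq\delta\}$ for every $\delta<1/2$. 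On that ball the product formula for the Plancherel density gives the pointwise ratio $|\cfct(\mu-it\xi)|^{-2}/|\cfct(\mu)|^{-2}\gtrsim\delta^{-2}$: for any root $\alpha_0$ with $\langle\xi,\alpha_0\rangle\neq0$ (such a root exists whenever $\eta\neq0$; for $\eta=0$ there is no shift and nothing to repair), the factor $|\langle\mu-it\xi,\alpha_0\rangle|^2$ stays bounded below while $|\langle\mu,\alpha_0\rangle|^2\lesssim\delta^2$. Choosing $\mathcal{F}f=\mathbf{1}_{\{\|\mu\|\leq\delta\}}$ --- legitimate, since $\mathcal{F}$ maps $L^2(\a,d\mu)^W$ \emph{onto} $L^2(i\a^*,d\nu)^W$ --- and letting $\delta\to0$ forces $M_0(t)=\infty$. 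So the $\Re z=0$ estimate fails for your family, localized or not; the obstruction is the non-translation-invariance of $d\nu$, which no decomposition in $\lambda$ can cure. Your parenthetical alternative, a Stein--Weiss interpolation on the fixed contour $\eta+i\a^*$, is also unavailable: for $f\in L^2$ the transform $\mathcal{F}f$ exists only on $i\a^*$ (recall $C(\epsilon_2\rho)=\{0\}$), so there is no $(2,2)$ endpoint on any shifted contour.

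The missing idea is to let the weight travel with the shift instead of sitting at the unshifted point: for instance, replace $|\cfct(\lambda)|^{-(1-z)}$ by $\bigl[\cfct(\lambda+z\xi)\,\cfct(-\lambda-z\xi)\bigr]^{-(1-z)/2}$, whose modulus on the line $\Re z=0$ equals $|\cfct(\mu)|^{-1}$ at the shifted point $\mu=\lambda+it\xi$, so that this boundary becomes an exact translate of the Plancherel identity. The price is paid elsewhere: one needs the pole/zero analysis of the Gamma factors on the tube (to choose a holomorphic branch and control the phase, which contributes only admissible growth in $t$), and at $z=\epsilon_p$ a comparison of the resulting weight $|\cfct(\lambda+\eta)\cfct(-\lambda-\eta)|^{-1}$ with $|\cfct(\lambda)|^{-2}$; it is only in this last comparison that elementary asymptotics --- and, if one likes, your two-region split together with the sup-bound from (b) --- legitimately enter. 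Without an ingredient of this kind, the central interpolation step of your proposal fails.
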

An important ingredient in the proof of both results is a characterization of the set of spectral parameters $\lambda$ for which $\varphi_\lambda$ is bounded. This description was obtained in  \cite[Theorem~4.2]{Narayanan-Pasquale-Pusti}: $\varphi_\lambda$ is bounded if and only if $\lambda\in C(\rho)+i\a^*$, in which case $|\varphi_\lambda(x)|\leq 1$ for every $x\in\a$. Note that $C(\epsilon_p\rho)\subset C(\rho)$ for $p\geq 2$. Also note that $C(\epsilon_2\rho)=\{0\}$.

\begin{lemma}\label{lemma.p1p2}
\begin{enumerate}[label=(\roman*)]
\item If $f$ belongs to $(L^{p_1}\cap L^{p_2})(\a,d\mu)^W$ for some $p_1,p_2\in[1,2]$, then $\mathscr{F}_{p_1}f=\mathscr{F}_{p_2}f$ $\nu$-almost everywhere on $i\a^*$.
\item If $h$ belongs to $(L^{q_1}\cap L^{q_2})(i\a^*,d\nu)^W$ for some $q_1,q_2\in[1,2]$, then $\mathscr{I}_{q_1}h=\mathscr{I}_{q_2}h$ $\mu$-almost everywhere on $\a$.
\end{enumerate}
\end{lemma}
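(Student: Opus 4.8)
The plan is to reduce both assertions to the single fact that, by construction, every $\mathscr{F}_p$ (respectively every $\mathscr{I}_q$) restricts to one and the same map on a common dense core, and then to transfer an almost-everywhere identity across two distinct modes of norm convergence by extracting nested subsequences.

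For (i) I may assume $p_1\le p_2$ and write $q_i=p_i/(p_i-1)$. The central step is to manufacture a \emph{single} sequence approximating $f$ simultaneously in $L^{p_1}(\a,d\mu)$ and in $L^{p_2}(\a,d\mu)$. Since the density $J$ is continuous, $d\mu$ assigns finite measure to compact sets; fixing a $W$-invariant exhaustion $K_1\subset K_2\subset\cdots$ of $\a$ by compacta (e.g.\ balls centred at the origin, which are $W$-invariant because $W$ acts orthogonally) and putting $f_n=f\,\mathbf{1}_{\{|f|\le n\}}\mathbf{1}_{K_n}$, each $f_n$ is $W$-invariant, bounded, and supported on a set of finite $\mu$-measure, hence $f_n\in L^1(\a,d\mu)^W\cap L^2(\a,d\mu)^W$. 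Because $|f_n|\le|f|$ and $f_n\to f$ pointwise, dominated convergence with majorant $2^{p_i}|f|^{p_i}$ gives $\|f_n-f\|_{p_i}\to0$ for $i=1,2$.

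Now $f_n$ lies in the common core, so $\mathscr{F}_{p_1}f_n=\mathcal{F}f_n=\mathscr{F}_{p_2}f_n$; call this common value $g_n$. Contractivity of $\mathscr{F}_{p_1}\colon L^{p_1}\to L^{q_1}$ and of $\mathscr{F}_{p_2}\colon L^{p_2}\to L^{q_2}$ then forces $g_n\to\mathscr{F}_{p_1}f$ in $L^{q_1}(i\a^*,d\nu)$ and $g_n\to\mathscr{F}_{p_2}f$ in $L^{q_2}(i\a^*,d\nu)$. As both are convergences in $L^{q}(d\nu)$ for the \emph{same} measure $\nu$, I pass to a subsequence of $(g_n)$ converging $\nu$-a.e.\ to $\mathscr{F}_{p_1}f$, and then to a further subsequence converging $\nu$-a.e.\ to $\mathscr{F}_{p_2}f$; along the common subsequence the two $\nu$-a.e.\ limits agree, whence $\mathscr{F}_{p_1}f=\mathscr{F}_{p_2}f$ $\nu$-a.e.

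Part (ii) is proved by the same scheme with the roles of the two sides interchanged: the operators $\mathscr{I}_{q}$ are the continuous extensions to $L^{q}(i\a^*,d\nu)^W$ of a single inversion map that agree on the core $L^1(i\a^*,d\nu)^W\cap L^2(i\a^*,d\nu)^W$, so truncating $h$ against an exhaustion of $i\a^*$ by sets of finite $\nu$-measure yields one sequence converging to $h$ in both $L^{q_1}$ and $L^{q_2}$, and the identical subsequence argument gives $\mathscr{I}_{q_1}h=\mathscr{I}_{q_2}h$ $\mu$-a.e. I expect the only genuinely delicate point to be this passage from convergence in two different $L^{q}$-norms to one pointwise statement — which is exactly why one approximates with a single sequence and extracts nested a.e.-convergent subsequences, rather than comparing two independent approximations. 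The remaining verifications (that the truncations sit in the common core and converge in the relevant norms) are routine.
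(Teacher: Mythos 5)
Your part (i) is essentially the paper's own argument: the paper picks a single sequence of simple $W$-invariant functions $g_n$ converging to $f$ simultaneously in $L^{p_1}$ and $L^{p_2}$ (your truncations $f\,\mathbf{1}_{\{|f|\le n\}}\mathbf{1}_{K_n}$ play exactly the same role), applies the Hausdorff--Young inequality \eqref{ineq-HY-root-1} to get $\mathcal{F}g_n\to\mathscr{F}_{p_1}f$ in $L^{p_1'}(d\nu)$ and $\mathcal{F}g_n\to\mathscr{F}_{p_2}f$ in $L^{p_2'}(d\nu)$, and then passes to almost-everywhere convergent subsequences. Your version is in fact slightly more careful: the paper extracts two subsequences without requiring one to be a subsequence of the other, which as literally written does not force the two a.e.\ limits to agree; your nested extraction fixes this small imprecision.

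Part (ii) is where you genuinely diverge. The paper does not rerun the approximation argument; it deduces (ii) from (i) together with the injectivity of $\mathscr{F}_p$ on $L^p(\a,d\mu)^W$ for $p\in(1,2]$ (quoted from Narayanan--Pasquale--Pusti), using that $\mathscr{I}_{q_j}h\in L^{q_j'}(\a,d\mu)^W$. You instead repeat the scheme of (i) for the inversion operators directly. That route is legitimate and has the merit of being self-contained (it avoids the nontrivial injectivity theorem), but it silently relies on two facts your write-up should make explicit: first, that $\mathscr{I}_q$ is \emph{defined} as the continuous extension of one fixed inversion map from the core $L^1(i\a^*,d\nu)^W\cap L^2(i\a^*,d\nu)^W$ (the paper never defines $\mathscr{I}_q$, so this must be stated as the working definition); and second, a Hausdorff--Young inequality for the inversion, $\|\mathcal{I}h\|_{q'}\le C\|h\|_q$ for $q\in[1,2]$, which is what guarantees both that the extension exists and that $\mathcal{I}$ of your truncations converges in the two target norms. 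The latter is true and easy --- the bound $|\varphi_{-\lambda}(x)|\le 1$ for $\lambda\in i\a^*$ gives the $(1,\infty)$ estimate, Plancherel gives $(2,2)$, and Riesz--Thorin interpolates --- but the paper only states Hausdorff--Young for the forward transform, so this supplement is needed to close your argument. With it, your proof of (ii) is correct and arguably more uniform than the paper's; the paper's is shorter granted the injectivity result.
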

\begin{proof}
\begin{enumerate}[label=(\roman*)]
\item Choose a sequence $\{g_n\}_{n=1}^\infty$ of simple $W$-invariant functions on $\a$ such that 
\[\lim_{n\to\infty}\|f-g_n\|_{p_1}=\lim_{n\to\infty}\|f-g_n\|_{p_2}=0.\]
Each function $\mathcal{F}g_n$ belongs to $(L^{p_1^\prime}\cap L^{p_2^\prime})(i\a^*,d\nu)^W$ by the Hausdorff--Young inequality \eqref{ineq-HY-root-1}, and
\[\lim_{n\to\infty}\|\mathscr{F}_{p_1}f-\mathcal{F}g_n\|_{p_1^\prime}=\lim_{n\to\infty}\|\mathscr{F}_{p_2}f\mathcal{F}g_n\|_{p_2^\prime}=0.\]
One can therefore extract subsequences $\{\mathcal{F}g_{n_k}\}_{k=1}^\infty$ and $\{\mathcal{F}g_{n_l}\}_{l=1}^\infty$ of $\{\mathcal{F}g_n\}_{n=1}^\infty$ such that $\mathcal{F}g_{n_k}\to\mathscr{F}_{p_1}f$ and $\mathcal{F}g_{n_l}\to\mathscr{F}_{p_2}f$ $\nu$-almost everywhere on $i\a^*$, from which it follows that $\mathscr{F}_{p_1}f=\mathscr{F}_{p_2}f$ $\nu$-almost everywhere on $i\a^*$ as claimed.
\item Since $\mathscr{I}_{q_j}h\in L^{q_j^\prime}(\a,d\mu)^W$ for $j=1,2$, the claim follows from the injectivity of $\mathscr{F}_p$ for $p\in(1,2]$ and (i).
\end{enumerate}
\end{proof}
The remainder of the section is concerned with interpolation results in Lorentz spaces that will be needed in our proof of the Hardy--Littlewood inequality. The interested reader may consult \cite[Chapter~V]{Stein-Weiss-analysis} for detailed proofs and historical remarks. Let $(X,\mu)$ be a $\sigma$-finite measure space and let $p\in(1,\infty)$. Define
\[\|f\|^*_{p,q}=\begin{cases}\displaystyle \Bigl(\frac{q}{p}\int_0^\infty t^{q/p-1}f^*(t)^q\,dt\Bigr)^{1/q}&\text{if } q<\infty\\ 
\displaystyle \sup_{t>0} t\lambda_f(t)^{1/p}&\text{when } q=\infty\end{cases}\]
where $\lambda_f$ is the distribution function of $f$ and $f^*$ the non-increasing rearrangement of $f$, that is
\[\lambda_f(s) =\mu(\{x\in X\,:\, |f(x)|>s\}) \quad\text{and}\quad f^*(t)=\inf \{s\,:\,\lambda_f(s)\leq t\}.\]
By definition, the Lorentz space $L^{p,q}(X)$ consists of measurable functions $f$ on $X$ for which $\|f\|^*_{p,q}<\infty$. 

\begin{definition}
Let $(X,d\mu)$ and $(Y,d\overline{\nu})$ be $\sigma$-finite measure spaces. A linear operator $T:L^p(X,d\mu)\to L^q(Y,d\overline{\nu})$ is \emph{strong type $(p,q)$} if it is continuous on $L^p(X,d\mu)$. Moreover, $T$ is \emph{weak type $(p,q)$} if there exists a positive constant $K$ independent of $f$ such that for all $f\in L^p(X,d\mu)$ and all $t>0$,
\[\mu\bigl(\bigl\{y\in Y\,:\, |Tf(y)|>t\bigr\}\bigr)\leq \Bigl(\frac{K}{s}\|f\|_{L^p(X,d\mu)}\Bigr)^q.\]
The infimum if such $K$ is the weak type $(p,q)$ norm of $T$.
\end{definition}

For $1\leq p,q\leq\infty$, $L^{p,p}(X,d\mu)=L^p(X,d\mu)$, and $\|f\|^*_{p,q_2}\leq\|f\|^*_{p,q_1}$ whenever $q_1\leq q_2$. The following version of H\"older's inequality for Lorentz spaces is well-known.
\begin{prop}
Let $0<p,q,r\leq\infty$, $0<s_1,s_2\leq\infty$. Then
\[\|f\cdot g\|_{r,s}^* \leq C_{p,q,s_1,s_2}\|f\|^*_{p,s_1}\|g\|^*_{q,s_2}\]
where $\frac{1}{p}+\frac{1}{q}=\frac{1}{r}$ and $\frac{1}{s_1}+\frac{1}{s_2}=\frac{1}{s}$. 
\end{prop}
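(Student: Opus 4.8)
The plan is to reduce the estimate to the ordinary H\"older inequality on the half-line $(0,\infty)$ by exploiting the submultiplicativity of the non-increasing rearrangement. As a preliminary step I would record the pointwise bound $(fg)^*(t_1+t_2)\leq f^*(t_1)\,g^*(t_2)$, valid for all $t_1,t_2>0$. To prove it, put $u=f^*(t_1)$ and $v=g^*(t_2)$; since $\{x:|f(x)g(x)|>uv\}\subseteq\{x:|f(x)|>u\}\cup\{x:|g(x)|>v\}$, the distribution functions obey $\lambda_{fg}(uv)\leq\lambda_f(u)+\lambda_g(v)\leq t_1+t_2$, and the definition of $f^*$ gives the claim. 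Specializing to $t_1=t_2=t/2$ yields the convenient form $(fg)^*(t)\leq f^*(t/2)\,g^*(t/2)$, which is all that the rest of the argument uses.

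Assume first that $p,q,r,s,s_1,s_2$ are all finite. I would insert the pointwise bound into the definition of $\|fg\|^*_{r,s}$ and change variables via $u=t/2$; this produces a factor $2^{1/r}$ and reduces matters to estimating $\int_0^\infty u^{s/r-1}f^*(u)^s g^*(u)^s\,du$. The decisive point is the bookkeeping of exponents: from $\tfrac{1}{r}=\tfrac{1}{p}+\tfrac{1}{q}$ together with $\tfrac{s}{s_1}+\tfrac{s}{s_2}=1$ (which is $\tfrac{1}{s}=\tfrac{1}{s_1}+\tfrac{1}{s_2}$ multiplied by $s$) one obtains $\tfrac{s}{r}-1=\bigl(\tfrac{s}{p}-\tfrac{s}{s_1}\bigr)+\bigl(\tfrac{s}{q}-\tfrac{s}{s_2}\bigr)$, so the weight $u^{s/r-1}$ factors into a part destined for $f$ and a part for $g$. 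I would then apply H\"older's inequality on $(0,\infty)$ with the conjugate exponents $\tfrac{s_1}{s}$ and $\tfrac{s_2}{s}$, conjugate precisely because $\tfrac{s}{s_1}+\tfrac{s}{s_2}=1$. A short computation shows the exponents on $u$ collapse to $\tfrac{s_1}{p}-1$ and $\tfrac{s_2}{q}-1$ respectively, so the two resulting factors are exactly $(\|f\|^*_{p,s_1})^{s}$ and $(\|g\|^*_{q,s_2})^{s}$ up to the explicit constants $\tfrac{p}{s_1}$ and $\tfrac{q}{s_2}$ coming from the normalizing factor in the definition of $\|\cdot\|^*$. Collecting the powers yields the asserted inequality, with $C_{p,q,s_1,s_2}=2^{1/r}\bigl(\tfrac{s}{r}\bigr)^{1/s}\bigl(\tfrac{p}{s_1}\bigr)^{1/s_1}\bigl(\tfrac{q}{s_2}\bigr)^{1/s_2}$.

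It remains to treat the configurations in which one or more indices equal $\infty$, where the integral is replaced by $\sup_{t>0}t\,\lambda_f(t)^{1/p}=\sup_{t>0}t^{1/p}f^*(t)$. Here $s=\infty$ forces $s_1=s_2=\infty$, and the same submultiplicativity bound reduces the claim to the elementary estimate $\sup_t t^{1/r}(fg)^*(t)\leq\bigl(\sup_t t^{1/p}f^*(t/2)\bigr)\bigl(\sup_t t^{1/q}g^*(t/2)\bigr)$; when exactly one of $s_1,s_2$ is infinite one substitutes the weak-type bound $g^*(t/2)\leq\|g\|^*_{q,\infty}(t/2)^{-1/q}$ directly in place of the H\"older step, and the case $p=\infty$ or $q=\infty$ is handled by $f^*(t/2)\leq\|f\|_\infty$. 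I expect the genuine content to sit entirely in the rearrangement lemma of the first paragraph; once it is available the finite case is pure bookkeeping, and the principal nuisance will be arranging the several degenerate index configurations so that a single constant $C_{p,q,s_1,s_2}$ covers them uniformly.
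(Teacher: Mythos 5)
The paper does not actually prove this proposition: it is quoted as ``well-known'' (it is O'Neil's H\"older inequality for Lorentz spaces), with the reader referred to the literature, so there is no argument in the paper to compare yours against; I therefore judge your proof on its own merits, and it is correct — indeed it is the standard argument. Your rearrangement lemma $(fg)^*(t_1+t_2)\leq f^*(t_1)\,g^*(t_2)$ is proved correctly; the one step worth making explicit is $\lambda_f(f^*(t_1))\leq t_1$ (and likewise for $g$), which you use silently in passing from $\lambda_{fg}(uv)\leq\lambda_f(u)+\lambda_g(v)$ to the bound $t_1+t_2$, and which rests on the right-continuity of the distribution function. The exponent bookkeeping is right: $\tfrac{s}{r}-1=\bigl(\tfrac{s}{p}-\tfrac{s}{s_1}\bigr)+\bigl(\tfrac{s}{q}-\tfrac{s}{s_2}\bigr)$, the pair $s_1/s$, $s_2/s$ is conjugate and admissible for H\"older since $s\leq s_1$ and $s\leq s_2$, the resulting weights are exactly $u^{s_1/p-1}$ and $u^{s_2/q-1}$, and your constant $2^{1/r}\bigl(\tfrac{s}{r}\bigr)^{1/s}\bigl(\tfrac{p}{s_1}\bigr)^{1/s_1}\bigl(\tfrac{q}{s_2}\bigr)^{1/s_2}$ correctly accounts for the normalizing factors in the paper's definition of $\|\cdot\|^*_{p,q}$. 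The degenerate cases are also handled soundly (in particular $s=\infty$ does force $s_1=s_2=\infty$, and the weak-type substitution $g^*(u)\leq\|g\|^*_{q,\infty}u^{-1/q}$ is the right move when exactly one of $s_1,s_2$ is infinite, using that $\tfrac1s=\tfrac1{s_1}$ then forces $s=s_1$); the only configuration you gloss over is $p=\infty$ with $s_1<\infty$, where $L^{\infty,s_1}$ is trivial or undefined, so nothing needs proving there — a sentence saying so would close that loop.
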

The dual of $L^{p,q}(X,d\mu)$ is the space $L^{p^\prime,q^\prime}(X,d\mu)$, where $\frac{1}{p}+\frac{1}{p^\prime}=1=\frac{1}{q}+\frac{1}{q^prime}$, and the dual of $L^{1,q}(X,d\mu)$ is $\{0\}$ when $1<q<\infty$. The following interpolation theorem is classical and can be found as Theorem 3.15 in \cite[Chapter~V]{Stein-Weiss-analysis}. It subsumes the interpolation theorem of Marcinkiewicz, for example.

\begin{theorem}[Interpolation between Lorentz spaces]\label{thm.lorentz-inter} Suppose $T$ is a subadditive operator of (restricted) weak types $(r_j,p_j)$, $j=0,1$, with $r_0<r_1$ and $p_0\neq p_1$, then there exists a constant $B=B_\theta$ such that  $\|T\|^*_{p,q}\leq B\|f\|^*_{r,q}$ for all $f$ belonging to the domain of $T$ and to $L^{r,q}$, where $1\leq q\leq\infty$, 
\begin{equation}\label{eqn.interpolate-indices}
\frac{1}{p}=\frac{1-\theta}{p_0}+\frac{\theta}{p_1},\quad \frac{1}{r}=\frac{1-\theta}{r_0}+\frac{\theta}{r_1}\quad\text{and}\quad 0<\theta<1.
\end{equation}
\end{theorem}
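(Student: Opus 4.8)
The plan is to establish the result by a Marcinkiewicz-type argument in the spirit of Calderón, reducing the boundedness of $T$ to a single pointwise estimate for the non-increasing rearrangement $(Tf)^*$ followed by a Hardy inequality. First I would rephrase the weak-type hypotheses in terms of rearrangements: the assumption that $T$ is of weak type $(r_j,p_j)$ is equivalent to the bound $(Tg)^*(t)\leq K_j\,t^{-1/p_j}\|g\|_{r_j}$ for $j=0,1$ and all admissible $g$. The goal then becomes to produce, for each $t>0$, a good decomposition $f=g_t+h_t$ and to exploit subadditivity through $(Tf)^*(t)\leq (Tg_t)^*(t/2)+(Th_t)^*(t/2)$.

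For the decomposition I would truncate $f$ at the height $y=f^*(s)$, where $s=s(t)$ is to be chosen, setting $g_t=(|f|-y)_+\,\mathrm{sgn}\,f$ (the large part, to be measured in $L^{r_0}$) and $h_t=f-g_t$ (the small part, measured in $L^{r_1}$). Elementary rearrangement identities give $g_t^*(u)=(f^*(u)-y)_+$ and $h_t^*(u)=\min(f^*(u),y)$, whence
\[
\|g_t\|_{r_0}^{r_0}\leq \int_0^s f^*(u)^{r_0}\,du,\qquad \|h_t\|_{r_1}^{r_1}\leq s\,f^*(s)^{r_1}+\int_s^\infty f^*(u)^{r_1}\,du.
\]
Inserting these into the weak-type bounds yields a pointwise majorization of $(Tf)^*(t)$ by the sum of two Hardy-type averages of $f^*$, that is, by the \emph{Calderón operator} applied to $f^*$. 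The scale $s(t)$ is chosen so that the two contributions balance; the hypotheses $r_0<r_1$ and $p_0\neq p_1$ fix the admissible exponent $\kappa=(1/p_1-1/p_0)/(1/r_1-1/r_0)$ in the choice $s(t)=t^{\kappa}$, and the interpolation relations \eqref{eqn.interpolate-indices} guarantee that both terms scale like $t^{-1/p}$ relative to the critical profile $f^*(u)\sim u^{-1/r}$.

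It remains to pass from the pointwise estimate $(Tf)^*(t)\lesssim t^{-1/p_0}\bigl(\int_0^{s(t)}f^*(u)^{r_0}\,du\bigr)^{1/r_0}+t^{-1/p_1}\bigl(\int_{s(t)}^\infty f^*(u)^{r_1}\,du\bigr)^{1/r_1}$ to the norm inequality $\|Tf\|^*_{p,q}\leq B\|f\|^*_{r,q}$. This is exactly where Hardy's inequality enters: after the substitution that turns $s(t)=t^{\kappa}$ into a common integration variable, each of the two Hardy-type operators is bounded on the weighted $L^q$-space defining $\|\cdot\|^*_{r,q}$ precisely because the exponents produced by \eqref{eqn.interpolate-indices} place us on the correct side of the Hardy threshold; here the chain $r_0<r<r_1$ is essential for the convergence of the two tail integrals. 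Summing the two contributions gives the claimed constant $B=B_\theta$, and the endpoint $q=\infty$ is handled by the same computation with the supremum replacing the integral.

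The step I expect to be the main obstacle is the careful treatment of the \emph{restricted} weak-type hypothesis: when the bounds are only assumed for characteristic functions of finite-measure sets, the truncation $f=g_t+h_t$ must be replaced by a decomposition into the dyadic layers $\{2^k<|f|\leq 2^{k+1}\}$, and one estimates $(Tf)^*$ by summing the restricted weak-type bounds over these layers before passing to the rearrangement. Reassembling the dyadic contributions into the Calderón operator without losing the correct $q$-dependence—so as to obtain the full off-diagonal range $L^{r,q}\to L^{p,q}$ rather than merely the diagonal case $q=p$ supplied by the classical Marcinkiewicz theorem—is the delicate point, and it is precisely this refinement that the Lorentz-space formulation of Theorem \ref{thm.lorentz-inter} is designed to capture.
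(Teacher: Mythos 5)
A preliminary point of calibration: the paper contains no proof of this theorem at all. It is stated as a classical result and attributed to Theorem 3.15 of Chapter V of Stein--Weiss, so the only meaningful comparison is with the proof in that reference (Hunt's form of the Marcinkiewicz interpolation theorem). Your proposal is, in outline, exactly that classical proof: Calder\'on's truncation of $f$ at height $f^*(s(t))$, the rearrangement identities for the two pieces, subadditivity in the form $(Tf)^*(t)\le (Tg_t)^*(t/2)+(Th_t)^*(t/2)$, the balancing choice $s(t)=t^{\kappa}$, and Hardy's inequality to pass from the pointwise bound by the Calder\'on operator to the $L^{r,q}\to L^{p,q}$ estimate. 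Your exponent bookkeeping is consistent with \eqref{eqn.interpolate-indices}: one checks $1/p_0-1/p=\kappa(1/r_0-1/r)$ and $1/p_1-1/p=\kappa(1/r_1-1/r)$, and the chain $r_0<r<r_1$ (a consequence of $0<\theta<1$) is indeed what places both Hardy averages on the convergent side. So the strategy is correct and coincides with the proof in the source that the paper cites.

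Two steps remain thinner than a complete argument, and you have flagged the more serious one yourself. First, the reformulation $(Tg)^*(t)\le K_j\,t^{-1/p_j}\|g\|_{r_j}$ encodes genuine weak type; under the \emph{restricted} hypothesis this bound is available only for characteristic functions, and your $g_t$ and $h_t$ are not of that form. The repair is the one you sketch---reduce to simple functions, decompose into nested level sets, and sum restricted weak-type bounds over the layers before rearranging---but this is a substantive portion of the Stein--Weiss proof, not a remark; the parenthetical ``(restricted)'' in the statement is precisely what forces it. Second, the Hardy step requires care when $q$ is small relative to $r_0$ and $r_1$: the naive Minkowski--Hardy argument needs the outer exponent $q/r_j$ to be at least one, and otherwise one must exploit the monotonicity of $f^*$ (or a power trick) to push the $q$-th power inside the inner integrals. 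Neither issue is a wrong turn, and both are resolved along the lines you indicate, so your sketch is a faithful reconstruction of the proof the paper outsources to the literature.
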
	
\begin{corollary}[Paley's extension of the Hausdorff--Young inequality]\label{cor.HY-Lorentz}
	If $f\in L^p(\R^n)$, $1<p\leq 2$, then its Fourier transform $\what{f}$ belongs to $L^{p',p}(\R^n)$ and there exists a constant $B=B_p$ independent of $f$ such that $\|\what{f}\|_{p',p}^*\leq B_p\|f\|_p$, where $\frac{1}{p}+\frac{1}{p'}=1$. In particular the Fourier transform is a continuous linear mapping from $L^p(\R^n)$ to the Lorentz space $L^{p',p}(\R^n)$ for $1<p<2$.
\end{corollary}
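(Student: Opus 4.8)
The plan is to realize the Euclidean Fourier transform $\what{\cdot}$ as an operator interpolating between two classical endpoint estimates and then to invoke the interpolation theorem between Lorentz spaces (Theorem \ref{thm.lorentz-inter}). First I would record the two endpoint bounds: the elementary inequality $\|\what{f}\|_\infty\le\|f\|_1$ shows that the Fourier transform is of strong type $(1,\infty)$, and the Plancherel theorem $\|\what{f}\|_2=\|f\|_2$ shows that it is of strong type $(2,2)$. By Chebyshev's inequality every operator of strong type $(r,s)$ is of weak type $(r,s)$ with the same constant (for $s=\infty$ the weak-type condition reduces to the uniform bound $\|\what{f}\|_\infty\le K\|f\|_1$), so $\what{\cdot}$ is simultaneously of weak type $(1,\infty)$ and weak type $(2,2)$.

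Next I would feed these two endpoints into Theorem \ref{thm.lorentz-inter} with domain exponents $r_0=1$, $r_1=2$ and target exponents $p_0=\infty$, $p_1=2$; since $r_0<r_1$ and $p_0\neq p_1$, the hypotheses are met. For $1<p<2$ I would choose the interpolation parameter $\theta=2/p'\in(0,1)$, which is admissible because $\tfrac{1}{p'}=1-\tfrac1p\in(0,\tfrac12)$. With this choice the interpolated domain exponent satisfies $\frac1r=(1-\theta)+\tfrac\theta2=1-\tfrac{1}{p'}=\frac1p$, while the interpolated target exponent $p_\theta$ satisfies $\frac{1}{p_\theta}=\tfrac\theta2=\frac1{p'}$; thus $r=p$ and $p_\theta=p'$.

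The decisive step is then to exploit the freedom in the second Lorentz index. The conclusion of Theorem \ref{thm.lorentz-inter} reads $\|\what{f}\|^*_{p',q}\le B\|f\|^*_{p,q}$ for every $q\in[1,\infty]$; selecting $q=p$ and using the identity $L^{p,p}(\R^n)=L^p(\R^n)$, so that $\|f\|^*_{p,p}=\|f\|_p$, yields exactly $\|\what{f}\|^*_{p',p}\le B_p\|f\|_p$. In particular $\what{f}\in L^{p',p}(\R^n)$, and the map $f\mapsto\what{f}$ is a continuous linear mapping from $L^p(\R^n)$ into $L^{p',p}(\R^n)$ for $1<p<2$.

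I do not expect a genuine obstacle here: the endpoint bounds are classical and the remaining argument is bookkeeping in the interpolation indices. The only point requiring care is the index matching—one must verify that the single parameter $\theta=2/p'$ simultaneously forces the interpolated domain index to collapse to $p$ and the interpolated target index to $p'$, and that choosing the free Lorentz exponent $q=p$ converts the right-hand side back into the ordinary $L^p$-norm. This is precisely the refinement of the Hausdorff--Young inequality to the target space $L^{p',p}$ rather than the larger $L^{p'}$, and it motivates the Lorentz-space machinery that the paper will later transport to the Heckman--Opdam setting.
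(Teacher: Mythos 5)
Your proposal is correct and follows essentially the same route as the paper: both take the endpoints $(r_0,p_0)=(1,\infty)$ (Riemann--Lebesgue) and $(r_1,p_1)=(2,2)$ (Plancherel), apply Theorem \ref{thm.lorentz-inter}, and choose the free Lorentz index $q$ equal to the domain exponent so that $\|f\|^*_{p,p}=\|f\|_p$. Your explicit parametrization $\theta=2/p'$ is just a cleaner bookkeeping of what the paper phrases as ``exchanging the roles of $p$ and $p'$,'' so there is nothing substantive to add.
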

\begin{proof}
Taking $(r_0,p_0)=(1,\infty)$, $(r_1,p_1)=(2,2)$ in theorem \ref{thm.lorentz-inter}, the conditions in \eqref{eqn.interpolate-indices} translate into $\frac{1}{p}=\frac{\theta}{2}$ and $\frac{1}{r}=1-\frac{\theta}{2}$, that is, $r=p'$. Furthermore take $q=r$. Since $\theta\in(0,1)$ in the hypothesis of theorem \ref{eqn.interpolate-indices}, the role of $p$ and $p'$ must be exchanged when we consider the setup in the present corollary. (Since $\frac{2}{p}=\theta\in(0,1)$ if and only if $p>2$). With this adjustment in mind, the conclusion to theorem \ref{eqn.interpolate-indices} becomes
$\|\what{f}\|^*_{p',p}\leq B\|f\|^*_{p,p}=B\|f\|_p$.
\end{proof}

As in the proof of corollary \ref{cor.HY-Lorentz}, we obtain the following extension immediately from the interpolation theorem \ref{thm.lorentz-inter}.
\begin{corollary}
The Heckman--Opdam transform is a continuous mapping from $L^p(A,d\mu)^W$ to $L^{p',p}(i\a^*,d\nu)^W$ whenever $1<p<2$.
\end{corollary}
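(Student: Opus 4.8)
The plan is to mimic the proof of Corollary~\ref{cor.HY-Lorentz} almost verbatim, replacing the Euclidean Fourier transform by $\mathcal{F}$ and supplying the two endpoint estimates appropriate to the Heckman--Opdam setting. The whole argument reduces to checking that $\mathcal{F}$ is of strong type $(1,\infty)$ and of strong type $(2,2)$, and then feeding these into Theorem~\ref{thm.lorentz-inter} with the same pair of endpoints used in Corollary~\ref{cor.HY-Lorentz}.

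First I would record the two endpoints. For the left endpoint I would use the bound $|\varphi_\lambda(x)|\le 1$, valid for all $\lambda\in i\a^*\subset C(\rho)+i\a^*$ and all $x\in\a$, quoted above from \cite[Theorem~4.2]{Narayanan-Pasquale-Pusti}. Hence for $f\in L^1(A,d\mu)^W$,
\[
|\mathcal{F}f(\lambda)|\le\int_\a|f(x)|\,|\varphi_\lambda(x)|\,d\mu(x)\le\|f\|_1,\qquad\lambda\in i\a^*,
\]
so that $\mathcal{F}$ is of strong type $(1,\infty)$, and in particular of weak type $(1,\infty)$. For the right endpoint I would invoke the Plancherel theorem recalled above: $\mathcal{F}$ extends to an isometric isomorphism of $L^2(\a,d\mu)^W$ onto $L^2(i\a^*,d\nu)^W$, so it is of strong type $(2,2)$ and hence of weak type $(2,2)$.

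Next I would apply Theorem~\ref{thm.lorentz-inter} with $(r_0,p_0)=(1,\infty)$ and $(r_1,p_1)=(2,2)$, exactly as in Corollary~\ref{cor.HY-Lorentz}. The relations \eqref{eqn.interpolate-indices} give $\frac1p=\frac\theta2$ and $\frac1r=1-\frac\theta2$, whence $r=p'$; taking $q=r$ and exchanging the roles of $p$ and $p'$ (as in the corollary, since $\frac2p=\theta\in(0,1)$ forces $p>2$) yields
\[
\|\mathcal{F}f\|^*_{p',p}\le B_p\|f\|^*_{p,p}=B_p\|f\|_p
\]
for $1<p<2$. Since linearity makes $\mathcal{F}$ subadditive and the strong-type endpoints furnish in particular the restricted weak-type hypotheses, the theorem applies directly, giving the asserted continuity $L^p(A,d\mu)^W\to L^{p',p}(i\a^*,d\nu)^W$.

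Finally I would remark that $W$-invariance is preserved throughout, so the image lands in $L^{p',p}(i\a^*,d\nu)^W$, and that the operator being interpolated is unambiguous because Lemma~\ref{lemma.p1p2} guarantees that the various $L^p$-extensions $\mathscr{F}_p$ agree on overlaps. I expect no serious obstacle here: the only points requiring care are the bookkeeping of the interpolation indices (and the $p\leftrightarrow p'$ swap inherited from Corollary~\ref{cor.HY-Lorentz}) and the verification that the target Lorentz norm is taken with respect to $d\nu=|\cfct(\lambda)|^{-2}\,d\lambda$, both of which are routine once the two endpoint bounds above are in place.
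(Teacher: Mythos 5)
Your proposal is correct and follows essentially the same route as the paper: the paper's proof consists of the single remark that the result follows from Theorem~\ref{thm.lorentz-inter} exactly as in Corollary~\ref{cor.HY-Lorentz}, with the two endpoints being precisely the ones you supply (the $L^1\to L^\infty$ bound from $|\varphi_\lambda(x)|\leq 1$ on $i\a^*$ and the $L^2$ Plancherel theorem). Your write-up merely makes explicit the endpoint verifications and index bookkeeping that the paper leaves implicit.
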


The preceding two corollaries are stronger than their respective standard forms since $L^{p^\prime,p}$ is continuously and properly embedded in $L^{p^\prime}$.

The last result on Lorentz spaces that we will need is due to R. O'Neil, \cite{Oneil-convolution}, and concerns the pointwise product of two functions. 

\begin{theorem}\label{thm.Oneil} 
	Let $q\in(2,\infty)$ and set $r=\frac{q}{q-2}$. For $g\in L^q(X)$ and $h\in L^{r,\infty}(X)$ it holds that $gh$ belongs to $L^{q',q}(X)$ with $\|gh\|^*_{q',q}\leq\|g\|_q\|h\|_{r,\infty}^*$.
\end{theorem}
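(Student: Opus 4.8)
The plan is to reduce the bound to a one–dimensional inequality on the half–line by means of non–increasing rearrangements, exploiting the algebraic identity $\tfrac{q}{r}=q-2$, forced by $r=\tfrac{q}{q-2}$, which makes the relevant powers of $t$ cancel exactly. First I would unravel the definition of the Lorentz quasinorm: since the indices are $(q',q)$ and $\tfrac{q}{q'}=q-1$, one has $\tfrac{q}{q'}-1=q-2$, so the assertion is equivalent to an estimate of the form
\[
\int_0^\infty t^{q-2}\,(gh)^*(t)^q\,dt\ \le\ C_q\,\bigl(\|h\|^*_{r,\infty}\bigr)^q\,\|g\|_q^q .
\]
The hypothesis $h\in L^{r,\infty}(X)$ enters only through the equivalent pointwise bound $h^*(s)\le \|h\|^*_{r,\infty}\,s^{-1/r}$ on its rearrangement, while $\|g\|_q^q=\int_0^\infty g^*(t)^q\,dt$ by equimeasurability.

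The analytic engine is the submultiplicativity of the rearrangement under pointwise products, $(gh)^*(t_1+t_2)\le g^*(t_1)\,h^*(t_2)$ for all $t_1,t_2>0$. This follows from the elementary inclusion $\{\,|gh|>\alpha\beta\,\}\subseteq\{\,|g|>\alpha\,\}\cup\{\,|h|>\beta\,\}$, which gives $\lambda_{gh}(\alpha\beta)\le\lambda_g(\alpha)+\lambda_h(\beta)$; choosing $\alpha=g^*(t_1)$ and $\beta=h^*(t_2)$ yields the claim. Writing $t_1=(1-\theta)t$ and $t_2=\theta t$ for a fixed $\theta\in(0,1)$ and inserting the weak–type bound for $h^*$ produces the pointwise estimate $(gh)^*(t)\le \|h\|^*_{r,\infty}\,(\theta t)^{-1/r}\,g^*\bigl((1-\theta)t\bigr)$. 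Raising this to the power $q$, multiplying by $t^{q-2}$ and integrating, the exponent of $t$ becomes $q-2-\tfrac{q}{r}=0$ precisely because $r=\tfrac{q}{q-2}$; the power of $t$ therefore disappears, and the substitution $v=(1-\theta)t$ collapses the integral to $\bigl(\|h\|^*_{r,\infty}\bigr)^q\,\theta^{-q/r}(1-\theta)^{-1}\int_0^\infty g^*(v)^q\,dv$. This is exactly the desired inequality.

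The main obstacle is not the structure of the argument but the precise value of the constant. The crude choice $\theta=\tfrac12$ already gives a finite $C_q$, and optimizing over $\theta$ sharpens it; recovering O'Neil's clean normalization as in the statement requires his more careful analysis, for which I would follow \cite{Oneil-convolution}. An alternative route, which avoids the loss inherent in submultiplicativity, is to start from the integrated Hardy--Littlewood inequality $\int_0^t (gh)^*(s)\,ds\le\int_0^t g^*(s)\,h^*(s)\,ds$ and combine it with Hardy's inequality for the averaging operator $f\mapsto \tfrac1t\int_0^t f$; this again produces the cancellation $q-2-\tfrac{q}{r}=0$ and reduces the whole estimate to $\|g\|_q$, at the expense of the Hardy constant. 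Either way, the substance of the proof is the single exponent identity forced by $r=\tfrac{q}{q-2}$, and the delicate point is merely the bookkeeping of the multiplicative constant.
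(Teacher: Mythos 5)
The paper itself contains no proof of this statement: it is quoted as O'Neil's theorem with a citation to \cite{Oneil-convolution} and then used as a black box in the proof of theorem \ref{thm.HY-qlarge}. Your argument is therefore a self-contained substitute rather than a variant of anything in the paper, and its structure is sound: the submultiplicativity $(gh)^*(t_1+t_2)\le g^*(t_1)h^*(t_2)$, the pointwise bound $h^*(s)\le\|h\|^*_{r,\infty}s^{-1/r}$, and the exponent cancellation $q-2-\tfrac{q}{r}=0$ do combine to give
\[
\bigl(\|gh\|^*_{q',q}\bigr)^q=\frac{q}{q'}\int_0^\infty t^{q-2}\bigl((gh)^*(t)\bigr)^q\,dt
\le (q-1)\,\theta^{-(q-2)}(1-\theta)^{-1}\bigl(\|h\|^*_{r,\infty}\bigr)^q\,\|g\|_q^q,
\]
i.e.\ the asserted inequality up to a constant $C_q$ depending only on $q$. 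Your alternative route via $\int_0^t(gh)^*\le\int_0^t g^*h^*$ and the weighted Hardy inequality is also valid, since the weight exponent $q-2$ is strictly below the critical value $q-1$.

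One point you flagged tentatively should be stated more forcefully: the constant $1$ in the statement cannot be recovered by optimizing over $\theta$ or by any other refinement, because under the paper's own (Stein--Weiss) normalization $\|f\|^*_{p,q}=\bigl(\frac{q}{p}\int_0^\infty t^{q/p-1}f^*(t)^q\,dt\bigr)^{1/q}$ the constant-$1$ inequality is in fact false. Indeed, on $X=(0,\infty)$ with Lebesgue measure take $g=\chi_{(0,1)}$ and $h(x)=x^{-1/r}$; then $\|g\|_q=1$ and $\|h\|^*_{r,\infty}=\sup_s s\lambda_h(s)^{1/r}=1$, while $(gh)^*(t)=t^{-1/r}$ for $t\in(0,1)$ and $0$ otherwise, so that $\bigl(\|gh\|^*_{q',q}\bigr)^q=(q-1)\int_0^1 t^{q-2}t^{-(q-2)}\,dt=q-1>1$. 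Thus your $C_q$-version is the correct statement under the paper's conventions (the clean constant in the literature is tied to a different normalization of the Lorentz quasinorm), and nothing is lost downstream: in the proof of theorem \ref{thm.HY-qlarge} every constant is absorbed into $D_q$, so your proof fully supports the way theorem \ref{thm.Oneil} is used in the paper.
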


\section{The Hardy--Littlewood inequalities}\label{sec.HY-ineqs}
The first part of the present section generalizes the inequality \eqref{eqn.MRSS}. We have decided to treat the rank one case separately as an illustrative example of the interpolation arguments that will be used throughout the section. 
\smallskip

Assume $\mathrm{dim}\,\a=1$, $m_\alpha+m_{2\alpha}\geq 1$, and define $Tf(\lambda)=|\lambda|^2\mathcal{F}f(\lambda)$. Since $-m_\alpha-m_{2\alpha}+1\leq 0$, it follows that $(1+|\lambda|)^{-(m_\alpha+m_{2\alpha})+1}\leq 1$ for all $\lambda\in i\a^*$. Moreover,
\[\begin{split}
\|Tf\|_2^2 &=\int_{i\a^*}|Tf(\lambda)|^2|W|^{-1}(1+|\lambda|)^{-(m_\alpha+m_{2\alpha})+1}|\lambda|^{-4}|\cfct(\lambda)|^{-2}\,d\lambda\\
&= |W|^{-1}\int_{i\a^*}|\mathcal{F}f(\lambda)|^2(1+|\lambda|)^{-(m_\alpha+m_{2\alpha})+1}|\cfct(\lambda)|^{-2}\,d\lambda\\
&\leq |W|^{-1}\int_{i\a^*}|\mathcal{F}f(\lambda)|^2|\cfct(\lambda)|^{-2}\,d\lambda = \|f\|_2^2,
\end{split}\]
so $T$ is of strong type $(2,2)$ as an operator from $L^2(A,d\mu)^W$ into $L^2(i\a^*,d\overline{\nu})^W=L^2(i\a^*,|W|^{-1}(1+|\lambda|)^{-(m_\alpha+m_{2\alpha})+1}|\lambda|^{-4}d\nu(\lambda))^W$. This is no longer true when $m_\alpha+m_{2\alpha}<1$, in which case one would have to employ a different type of weight and/or modify the measure $d\overline{\nu}$.

Note that $|\mathcal{F}f(\lambda)|\leq C\|f\|_1$ for all $\lambda\in i\a^*$ and $f\in L^1(A,d\mu)$.. For $t>0$ and $0\neq f\in L^1(A,d\mu)^W$, define 
\[E_t(f)=\{\lambda\,:\,|Tf(\lambda)|>t\},\quad 
A_t(f)=\Bigl\{\lambda\,:\,|\lambda|>\Bigl(\frac{t}{C\|f\|_1}\Bigr)^{1/2}\Bigr\},\quad\text{and}\quad
a_t =\Bigl(\frac{t}{C\|f\|_1}\Bigr)^{1/2}.\]
It follows from the definition of $T$ that $E_t(f)\subset A_t(f)$ for all $t>0$, hence $|E_t(f)|\leq|A_t(f)|$, where
\[\begin{split}
|A_t(f)| &= \frac{1}{|W|}\int_{A_t(f)} (1+|\lambda|)^{-(m_\alpha+m_{2\alpha})+1}|\lambda|^{-4}|\cfct(\lambda)|^{-2}\,d\lambda\\
&\asymp \frac{1}{|W|}\int_{A_t(f)} (1+|\lambda|)^{-(m_\alpha+m_{2\alpha})+1}|\lambda|^{-4}|\lambda|^2(1+|\lambda|)^{m_\alpha+m_{2\alpha}-2}\,d\lambda\\
&=\frac{1}{|W|}\int_{A_t(f)}|\lambda|^{-2}(1+|\lambda|)^{-1}\,d\lambda=\frac{1}{|W|}\int_{a_t}^\infty |\lambda|^{-2}(1+|\lambda|)^{-1}\,d\lambda\\
&\approx \frac{1}{|W|}\int_{a_t}^\infty\frac{d\lambda}{\lambda^3} = Ca_t^{-2}=C'\frac{\|f\|_1}{t}
\end{split}
\]
This shows that $T$ is also of weak type $(1,1)$ as an operator from $L^2(A,d\mu)^W$ into $L^2(i\a^*,d\overline{\nu})^W$. It follows from the Marcinkiewicz interpolation theorem that $T$ is of strong type $(p,p)$, for $p\in(1,2)$, that is
\begin{equation}\label{eqn.inter-ineq}
\frac{1}{|W|}\int_{i\a^*}|Tf(\lambda)|^p(1+|\lambda|)^{-(m_\alpha+m_{2\alpha})+1}|\lambda|^{-4}\,d\nu(\lambda)\leq C\|f\|_p^p.
\end{equation}
Since $1+|\lambda|\geq|\lambda|^{2-p}$ for all $\lambda$, it holds that $(1+|\lambda|)^{-(m_\alpha+m_{2\alpha})+1}\geq|\lambda|^{2-p}(1+|\lambda|)^{-(m_\alpha+m_{2\alpha})}$, which allows us to rewrite \eqref{eqn.inter-ineq} as 
\[\frac{1}{|W|}\int_{i\a^*}|\mathcal{F}f(\lambda)|^p|\lambda|^{p-2}(1+|\lambda|)^{-(m_\alpha+m_{2\alpha})}\,d\nu(\lambda)\leq C\|f\|_p^p.\]

\begin{remark}
Consider the more natural weighted measure $d\widetilde{\nu}(\lambda)=|W|^{-1}|\lambda|^{-4}|\cfct(\lambda)|^{-2}\,d\lambda$. The operator $T$ is then of strong type $(2,2)$ as an operator from $L^2(A,d\mu)^W$ into $L^2(i\a^*,d\widetilde{\nu})^W$ \emph{for all} $m_\alpha,m_{2\alpha}$ but it is no clear if $T$ is of weak type $(1,1)$. A good choice of weights is therefore essential.

It is also possible to consider weighted measures of the form $d\what{\mu}(\lambda)=\psi(\lambda)|\cfct(\lambda)|^{-2}d\lambda$ where 
\[\psi(\lambda)=\begin{cases}\psi_1(\lambda)&\text{for }\|\lambda\|\leq 1\\ \psi_2(\lambda)&\text{for } \|\lambda\|>1\end{cases}\]
for suitable choices of $\psi_1,\psi_2$, but this leads to so much freedom that one should no longer speak of Hausdorff--Young inequalities. It would, however, allow one to treat the case $0\leq m_\alpha+m_{2\alpha}<1$ as well.
\end{remark}

The next result may be seen as a weighted Hardy--Littlewood inequality, from which a natural analogue of the Hardy--Littlewood inequality in \cite{Mohanty-II} will follow. It is important to allow a certain freedom in the choice of weights, since it would otherwise be difficult to `guess' the correct formulation in higher rank. The parameter constraints arise from having to be able to find an elementary proof of the required weak type $(1,1)$ estimate. Recall that $\beta=\sum_{\alpha\in\Sigma_0^+}(m_\alpha+m_{2\alpha})$, where $m:\Sigma\to[0,\infty)$ is a non-negative multiplicity function, and $m_\alpha:=m(\alpha)$.

\begin{prop}\label{prop.abstract-interpolate}
Assume $\beta+n>0$ and $1<p<2$, and consider the operator $T$ defined on $L^2(X,d\mu)$, $X=A/W$, by
$Tf(\lambda)=\|\lambda\|^{k+n}\mathcal{F}f(\lambda)$, where $k\geq 0$. Moreover let 
\[(Y,d\overline{\nu})=\Bigl(i\a^*, \frac{1}{\vert W\vert}\|\lambda\|^a(1+\|\lambda\|)^b|\cfct(\lambda)|^{-2}\,d\lambda\Bigr)\]
where the parameters $k,a,b$ satisfy the conditions 
\begin{enumerate}[label=(\roman*)]
\item $a+b\leq \frac{2}{3}(n-\beta)$
\item $a+b+\beta+n=-(k+n)$
\end{enumerate}
Then $T$ is of strong type $(2,2)$ and weak type $(1,1)$ as an operator from $L^2(X,d\mu)$ into $L^2(Y,d\overline{\nu})$, and therefore of strong type $(p,p)$. More precisely, there exists a positive constant $C_p$ independent of $f$ such that
\[\frac{1}{|W|}\int_{i\a^*}\vert\mathcal{F}f(\lambda)\vert^p \|\lambda\|^{(k+n)p+a}(1+\|\lambda\|)^b\,d\nu(\lambda)\leq C_p\|f\|_p^p\]
for every $f\in L^p(A,d\mu)^W$.
\end{prop}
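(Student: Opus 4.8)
The plan is to recognize the stated inequality as the strong type $(p,p)$ bound for $T$, obtained by interpolating between the endpoints $(2,2)$ and $(1,1)$, exactly as in the rank one computation preceding the proposition. Once $T$ is shown to be of strong type $(2,2)$ and weak type $(1,1)$ as a map $L^2(X,d\mu)\to L^2(Y,d\overline{\nu})$, the Marcinkiewicz interpolation theorem — a special case of Theorem \ref{thm.lorentz-inter} with $(r_0,p_0)=(1,1)$, $(r_1,p_1)=(2,2)$ — yields strong type $(p,p)$ for every $p\in(1,2)$. The displayed inequality is then just the unravelling of $\|Tf\|_{L^p(Y,d\overline{\nu})}^p\le C_p\|f\|_p^p$ after inserting $Tf(\lambda)=\|\lambda\|^{k+n}\mathcal{F}f(\lambda)$ and the explicit form of $d\overline{\nu}$, so that the power $\|\lambda\|^{(k+n)p}$ combines with $\|\lambda\|^a$.

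For the strong type $(2,2)$ estimate I would substitute the definitions directly. Since $|Tf(\lambda)|^2=\|\lambda\|^{2(k+n)}|\mathcal{F}f(\lambda)|^2$, the factor $\|\lambda\|^{2(k+n)}$ merges with the weight $\|\lambda\|^a$ appearing in $d\overline{\nu}$, and the Plancherel identity $\|f\|_2^2=|W|^{-1}\int_{i\a^*}|\mathcal{F}f(\lambda)|^2|\cfct(\lambda)|^{-2}\,d\lambda$ reduces the matter to the pointwise bound $\|\lambda\|^{2(k+n)+a}(1+\|\lambda\|)^b\le C$ on $i\a^*$. Near the origin this requires the exponent $2(k+n)+a$ to be nonnegative, while for large $\|\lambda\|$ the controlling exponent is $2(k+n)+a+b$, which by condition (ii) equals $k-\beta$; thus the weight is bounded provided $k\le\beta$, the higher-rank analogue of the standing assumption $m_\alpha+m_{2\alpha}\ge 1$ in the rank one model. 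This step is routine once the exponents are tracked.

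The weak type $(1,1)$ estimate is the heart of the matter, and I expect it to be the main obstacle. Here I would use that $\varphi_\lambda$ is bounded by $1$ on $i\a^*$, whence $|\mathcal{F}f(\lambda)|\le C\|f\|_1$ and therefore $|Tf(\lambda)|\le C\|\lambda\|^{k+n}\|f\|_1$. Consequently the super-level set $E_t=\{\lambda:|Tf(\lambda)|>t\}$ is contained in the exterior ball $A_t=\{\lambda\in i\a^*:\|\lambda\|>a_t\}$ with $a_t=(t/C\|f\|_1)^{1/(k+n)}$, so it suffices to bound $\overline{\nu}(A_t)$. Inserting the asymptotics $|\cfct(\lambda)|^{-2}\asymp\|\lambda\|^{2|\Sigma_0^+|}(1+\|\lambda\|)^{\beta-2|\Sigma_0^+|}$ and passing to polar coordinates turns this into the elementary radial integral $\int_{a_t}^\infty s^{a+2|\Sigma_0^+|+n-1}(1+s)^{b+\beta-2|\Sigma_0^+|}\,ds$, and the weak type $(1,1)$ requirement is that this be $\lesssim a_t^{-(k+n)}\asymp\|f\|_1/t$ uniformly in $a_t\in(0,\infty)$.

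The delicate point is precisely this uniformity: because $a_t$ sweeps out all of $(0,\infty)$ as $t$ varies, the radial integral must display the decay rate $a_t^{-(k+n)}$ for large radius while remaining suitably controlled for small radius, and the two regimes of $|\cfct|^{-2}$ — the power $\|\lambda\|^{2|\Sigma_0^+|}$ near $0$ against $\|\lambda\|^{\beta}$ at infinity — must be reconciled with a single constant. Condition (ii) is calibrated so that the tail exponent $a+b+\beta+n$ equals $-(k+n)$, forcing the large-radius asymptotics to match $a_t^{-(k+n)}$ exactly, whereas condition (i), namely $a+b\le\frac{2}{3}(n-\beta)$, is what secures convergence and the correct low-frequency behaviour; carrying out these two case analyses and bookkeeping the implied constant is the only genuinely technical part. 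With both endpoint estimates established, Marcinkiewicz interpolation then furnishes the strong type $(p,p)$ bound and the constant $C_p$, completing the proof.
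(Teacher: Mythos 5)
Your proposal follows essentially the same route as the paper's own proof: strong type $(2,2)$ from the Plancherel theorem plus pointwise boundedness of the weight $\|\lambda\|^{2(k+n)+a}(1+\|\lambda\|)^b$, weak type $(1,1)$ from the bound $|\mathcal{F}f(\lambda)|\leq C\|f\|_1$, the containment $E_t(f)\subset A_t(f)$, the $\cfct$-function asymptotics and polar coordinates with the tail exponent pinned down by condition (ii), and finally Marcinkiewicz interpolation. The one inaccuracy is your attribution of the small-radius/convergence control to condition (i): since (i) constrains only the sum $a+b$ (and is in fact implied by (ii) together with $k\geq 0$ and $\beta+n>0$), it cannot play that role — but this is a defect shared with the paper, whose proof never invokes (i) and silently passes over the same uniformity issue you correctly flag.
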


\begin{proof}
Consider the measure spaces $(X,d\mu)=(A/W,d\mu)$ and 
\[(Y,d\overline{\nu})=\Bigl(i\a^*, \frac{1}{\vert W\vert}\|\lambda\|^a(1+\|\lambda\|)^b|\cfct(\lambda)|^{-2}\,d\lambda\Bigr).\]
Let $Tf(\lambda)=\|\lambda\|^{k+n}\mathcal{F}f(\lambda)$, where $k$ is to be determined shortly. As in the rank one calculation that preceded the present theorem, the crux of the proof will be to verify that $T$ is strong type $(2,2)$ and weak type $(1,1)$ as an operator from $(X,d\mu)$ into $(Y,d\overline{\nu})$. As for the first property, it follows from Plancherel's theorem that
\[\|Tf\|_2^2 =\frac{1}{|W|}\int_{i\a^*}|\mathcal{F}f(\lambda)|^2\|\lambda\|^{2(k+n)}\|\lambda\|^a(1+\|\lambda\|)^b|\cfct(\lambda)|^{-2}\,d\lambda \simeq \|f\|_2^2\]
\emph{provided} $\psi(\lambda)=\|\lambda\|^{2(k+n)}\|\lambda\|^a(1+\|\lambda\|)^b\asymp 1$. This holds whenever $2(k+n)+a+b\leq 0$ (the factor $\psi$ stays bounded for $\|\lambda\|\gg 1$) and at the same time $2(k+n)+a\geq 0$ (so that $\psi$ is bounded near $\lambda=0$).

For $t>0$ and $f\in L^1\cap L^2$, $\|f\|_1\neq 0$, consider the sets $E_t(f)=\{\lambda\,:\,\vert Tf(\lambda)\vert>t\}$ and $A_t(f)=\{\lambda\,:\,\|\lambda\|>(\frac{t}{C\|f\|_1})^{\frac{1}{k+n}}\}$, where $C$ is the constant coming from the estimate $\vert\mathcal{F}f(\lambda)|\leq C\|f\|_1$. By definition of $T$ it follows that $E_t(f)\subset A_t(f)$, whereby
\[\begin{split}
|E_t(f)|& \leq |A_t(f)| =\frac{1}{|W|}\int_{A_t(f)} \|\lambda\|^a(1+\|\lambda\|)^b|\cfct(\lambda)\|^{-2}\,d\lambda\\ &\simeq \int_{A_t(f)} \|\lambda\|^{a+2\vert\Sigma_0^+\vert}(1+\|\lambda\|)^{b+\beta-2\vert\Sigma_0^+\vert}\,d\lambda\simeq \int_{A_t(f)}\|\lambda\|^{a+b+\beta}\,d\lambda \\ &= C'\int_{a_t}^\infty s^{a+b+\beta}s^{n-1}\,ds=C'\Bigl(\Bigl(\frac{t}{C\|f\|_1}\Bigr)^{\frac{1}{k+n}}\Bigr)^{a+b+\beta+n} = C''\frac{\|f\|_1}{t}
\end{split}\]
since $a+b+\beta+n=-(k+n)$ by construction\footnote{A natural choice would be to choose $k=\beta$, in which case one should add the assumption $\beta+n>0$. This is automatic for symmetric spaces where root multiplicities are integers, but for more general choices of root multiplicities this could be violated.}.
\end{proof}

The extension to the case $p>2$ utilizes the stronger interpolation result theorem \ref{thm.lorentz-inter} and is motivated by the rank one statement in \cite[Theorem~4.5]{Mohanty-II}. A similar argument leads to a generalization of \cite[Lemma~§4.1]{Anker-besov} but we leave it to the interested reader to write down the details.

\begin{definition}
	A \emph{Young function} is a measurable function $\psi:\a_+\to\R$ with the property that $\mu(\{x\in\a\,:\,\vert\psi(x)\vert\leq t\})\lesssim t$ for all $t>0$.
\end{definition}
\begin{example}\label{example.young}
	In $\R^n$, the function $\psi(x)=\|x\|^m$ is a Young function in $\R^n$ if and only if $m=n$, since $|\{x\in\R^n\,:\,\|x\|^m<t\}|=|B(0,t^{1/m})|=Ct^{n/m}$. Since norms on $\R^n$ are equivalent, the $n$.th power of \emph{any} norm on $\R^n$ gives rise to a Young function.
\end{example}
It is easy to construct Young functions associated with $(\a,\Sigma,m)$. An infinite family of examples is given by $\psi(x)=h(x)J(x)$, where $h(x)=h_0(\|x\|)$ is radial and satisfies $\int_0^\infty \frac{s^n-1}{h_0(s)}ds<\infty$. In this case,
\begin{equation}\label{ineq.sublevel}
\mu(\{x\,:\,\vert\psi(x)\vert\leq t\})\leq t\int_{\a}\frac{dx}{h(x)} =Ct\int_0^\infty\frac{s^{n-1}}{h_0(s)}\,ds = C't\end{equation}
for every $t>0$. For many purposes the estimate in \eqref{ineq.sublevel} is too crude, however. The norm power $\|\cdot\|^n$ would not meet the requirement that $\int_0^\infty s^{n-1}/s\,ds$ be finite, for example, so the estimate \eqref{ineq.sublevel} is only sensible when the measure of the sublevel sets $\{x\,:\,|\psi(x)|<t\}$  cannot be estimated directly. One such example is the following.

\begin{definition}
	The \emph{hyperbolic} Young function associated with $(\a,\Sigma,m)$ is the function $\psi_h(x)=\cosh(\|x\|)J(x)$. This is a root system analogue of the Young function considered in \cite[Lemma~4.4]{Mohanty-II}. \end{definition}

Let $\psi$ be a Young function for $(\a,\Sigma,m)$ and $q>2$. The space $L^{(q)}_\psi(\a_+)$ consists of all measurable $W$-invariant functions $f:\a\to\C$ such that
\[\|f\|_{(q),\psi}:=\Bigl(\int_{\a_+}\vert f(x)\vert^q\psi(x)^{q-2}\,d\mu(x)\Bigr)^{1/q}<\infty.\]
In other words, $f$ belongs to $L^{(q)}_\psi$ if and only if $f\cdot\psi^{1-\frac{2}{q}}$ belongs to $L^q$.
\medskip

The next result is a generalization of theorem \ref{thm.1} in the introduction.

\begin{theorem}\label{thm.HY-qlarge}
	Let $q>2$ and $f\in L^{(q)}_{\psi_h}(\a_+)$. There exists a positive constant $D_q$ independent of $f$ such that
	\[\frac{1}{|W|}\int_{i\a^*}|\mathcal{F}f(\lambda)|^q\,d\nu(\lambda)\leq D_q^q\|f\|^q_{(q),\psi}.\]
\end{theorem}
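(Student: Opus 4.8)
The plan is to mimic the classical proof of the Hardy--Littlewood inequality \eqref{HL-ineq1} in the Lorentz-space formalism: factor the input function through O'Neil's product theorem and then feed the result into a Lorentz-refined Hausdorff--Young inequality for $\mathcal{F}$. First I would record the two endpoint mapping properties of $\mathcal{F}$ that are available from the preceding sections. The uniform bound $|\mathcal{F}f(\lambda)|\le C\|f\|_1$ for $\lambda\in i\a^*$ exhibits $\mathcal{F}$ as a (strong, hence weak) type $(1,\infty)$ operator from $L^1(\a,d\mu)^W$ into $L^\infty(i\a^*,d\nu)^W$, while the Plancherel theorem makes it of type $(2,2)$ into $L^2(i\a^*,d\nu)^W$. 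Applying the interpolation theorem \ref{thm.lorentz-inter} with $(r_0,p_0)=(1,\infty)$ and $(r_1,p_1)=(2,2)$, and taking the free Lorentz index equal to $q$, yields for the value $\theta$ with $\tfrac1q=\tfrac\theta2$ (so that the interpolated exponents are $p=q$ and $r=q'$) the bound $\|\mathcal{F}f\|_{L^q(d\nu)}=\|\mathcal{F}f\|^*_{q,q}\le B\,\|f\|^*_{q',q}$. In other words $\mathcal{F}$ maps $L^{q',q}(\a,d\mu)^W$ continuously into $L^q(i\a^*,d\nu)^W$.

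The second ingredient is to realize the weight so that Theorem \ref{thm.Oneil} applies with precisely this target space $L^{q',q}$. Set $r=\frac{q}{q-2}$ and write $g=f\cdot\psi_h^{1-2/q}$ and $h=\psi_h^{-(1-2/q)}$, so that $f=g\,h$ and, by the very definition of the weighted norm, $\|g\|_{L^q(d\mu)}=\|f\|_{(q),\psi_h}$. The key observation is that a Young function satisfies $\psi_h^{-1}\in L^{1,\infty}(\a,d\mu)$: the defining sublevel estimate $\mu(\{\psi_h\le s^{-1}\})\lesssim s^{-1}$ is exactly the statement that the distribution function of $\psi_h^{-1}$ is $O(1/s)$. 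Raising to the power $1-\tfrac2q\in(0,1)$ and using that $u\in L^{1,\infty}$ implies $u^{\theta}\in L^{1/\theta,\infty}$, one obtains $h\in L^{r,\infty}(\a,d\mu)$ with $\|h\|^*_{r,\infty}$ depending only on $q$ and the Young constant of $\psi_h$. Theorem \ref{thm.Oneil}, applied with this $q$ and $r$, then shows $f=gh\in L^{q',q}(\a,d\mu)^W$ with $\|f\|^*_{q',q}\le\|g\|_{L^q(d\mu)}\,\|h\|^*_{r,\infty}$.

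Combining the two steps, $\mathcal{F}f$ is well defined (as the continuous extension of $\mathcal{F}$ to $L^{q',q}$) and
\[
\|\mathcal{F}f\|_{L^q(d\nu)}\le B\,\|f\|^*_{q',q}\le B\,\|h\|^*_{r,\infty}\,\|g\|_{L^q(d\mu)}=D_q\,\|f\|_{(q),\psi_h},
\]
which upon taking $q$-th powers is the asserted inequality with constant $D_q^q$. I expect the only genuinely delicate points to be bookkeeping rather than substance: one must check that the second Lorentz indices line up so that O'Neil's output space $L^{q',q}$ is exactly the domain on which the interpolated Hausdorff--Young estimate is available (both carry the index $q$), and that $\mathcal{F}f$ is interpreted via the density argument since $f\in L^{q',q}$ need not lie in $L^1\cap L^2$. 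The structural content---that the hyperbolic Young function is engineered precisely so that $\psi_h^{-(1-2/q)}$ sits in the weak space $L^{q/(q-2),\infty}$ demanded by Theorem \ref{thm.Oneil}---is what singles out $\psi_h^{q-2}$ as the correct weight, exactly as $\|x\|^{n(q-2)}$ is in the Euclidean situation of Example \ref{example.young}.
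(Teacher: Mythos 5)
Your proof is correct and is essentially the paper's own argument: the same Lorentz-space interpolation between the $(1,\infty)$ and $(2,2)$ endpoints via theorem \ref{thm.lorentz-inter}, giving $\|\mathcal{F}f\|^*_{q,q}\leq B\|f\|^*_{q',q}$, followed by O'Neil's theorem \ref{thm.Oneil} applied to the factorization $f=\bigl(f\psi_h^{1-2/q}\bigr)\cdot\psi_h^{-(1-2/q)}$, with the Young-function sublevel estimate placing $\psi_h^{-(1-2/q)}$ in $L^{q/(q-2),\infty}(\a,d\mu)$. Your closing remark about interpreting $\mathcal{F}f$ by density corresponds exactly to the paper's reduction to simple functions and its final density step.
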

\begin{proof}
	Let $f$ be a simple function on $A$ and let $Tf(\lambda)=\mathcal{F}f(\lambda)$ (we do not need to add weights to the operator that enters the interpolation argument). Then $\|Tf\|^*_{\infty,\infty}=\|Tf\|_\infty\leq C\|f\|_1=\|f\|^*_{1,1}$, and by the Plancherel theorem it furthermore holds that $\|Tf\|_{2,\infty}^*\leq\|Tf\|^*_{2,2}\leq\|f\|_2\leq\|f\|^*_{2,1}$. By interpolation (cf. theorem \ref{thm.lorentz-inter}) it follows that $\|Tf\|^*_{q,q}\leq \|f\|^*_{q',q}$.
	
	Now define $g(x)=f(x)\psi_h(x)^{1-\frac{2}{q}}$, where $\psi_h(x)=\cosh(\|x\|)J(x)$. Then $g$ belongs to $L^q(A,d\mu)^W$ by hypothesis, since
	\[\|g\|^q_q=\int_A\vert f(x)\vert^q\vert\psi_h(x)\vert^{q-2}\,d\mu(x) = \|f\|^q_{(q),\psi_h}\]
	It follows from the sublevel set estimate implied by $\psi_h$ being a Young function for $(\a,\Sigma,m)$ that 
	\[\mu\bigl(\{x\,:\,\vert\psi_h(x)\vert^{\frac{2}{q}-1}>t\}\bigr) = \mu\Bigl(\Bigl\{x\,:\,\vert\psi_h(x)\vert^{1-\frac{2}{q}}<\frac{1}{t}\Bigr\}\Bigr)\leq Ct^{-\frac{q}{q-2}},\]
	whence $\psi^{\frac{2}{q}-1}$ belongs to $L^{r,\infty}(A,d\mu)$, where $r=\frac{q}{q-2}$. By an application of O'Neil's theorem \ref{thm.Oneil} it is seen that
\[\begin{split}
\frac{1}{|W|}\int_{i\a^*}|\mathcal{F}f(\lambda)|^q\,d\nu(\lambda)&\leq \|f\|^*_{p,q}\leq \|g\|_q\|\psi_h\|^*_{t,\infty}\\ & \leq C\int_A|f(x)|^q|\psi_h(x)|^{q-2}J(x)\,dx = c\|f\|^p_{(q),\psi_h}
\end{split}\]
which was the desired conclusion for simple functions. The extension to general functions in $L^{(q)}_{\psi_h}(\a_+)$ now follows by standard density arguments.
\end{proof}

As briefly mentioned in the introduction, Ray and Sarkar were able to obtain a different version of the Hardy--Littlewood inequality for the Helgason--Fourier transform. They might have been motivated by the complex version of the Hausdorff--Young inequality, cf. \eqref{ineq-HY-root-2}. where the transform is extended holomorphically into a certain domain in the complex plane. At the same time Ray and Sarkar used slightly different weights in their interpolation argument, the result being the following theorem, cf. \cite[Theorem~4.11]{Ray-Sarkar-trans} (in their notation $S$ denotes a Damek--Ricci space, but the reader may replace it with a hyperbolic space).

\begin{theorem}\label{thm.Ray-Sarkar}
\begin{enumerate}[label=(\roman*)]
\item Let $1<q\leq 2$ be fixed. Then for $f\in L^p(S)$, $1<p\leq q$,
\[\Bigl(\int_\R\|\widetilde{f}(\lambda\pm i\gamma_q\rho,\cdot)\|_{L^q(N)}^r (|\lambda||\cfct(\lambda)|^{-2})^{r/p'-1}|\cfct(\lambda)|^{-2}\,d\lambda\Bigr)^{1/r}\leq C_{\pm,p,q}\|f\|_p\]
where $\frac{1}{r}=1-\frac{q'-1}{p'}$.
\item Let $2\leq q<\infty$ be fixed. Then for $f\in L^{(p)}(S)$ with $q\leq p<\infty$, 
\[\Bigl(\int_\R\|\widetilde{f}(\lambda\pm i\gamma_{q'}\rho,\cdot)\|^p_{L^{q'}(N)}|\cfct(\lambda)|^{-2}\,d\lambda\Bigr)^{1/p}\leq C_{\pm,p,q}\|f\|_{(p)}.\]
\end{enumerate}
\end{theorem}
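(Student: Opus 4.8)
The plan is to establish both parts by transplanting to the Helgason--Fourier transform on $S=NA$ the two interpolation schemes already used above: the weak-type/Lorentz scheme behind Proposition~\ref{prop.abstract-interpolate} for the range in~(i), and the O'Neil scheme behind Theorem~\ref{thm.HY-qlarge} for the range in~(ii). The one feature absent from the real-variable arguments above is the contour shift $\lambda\mapsto\lambda\pm i\gamma_q\rho$, which I would justify exactly as in Lemma~\ref{lemma.NPP-2}: for $f$ in the relevant $L^p$-class the transform $\widetilde f(\lambda,n)$ continues holomorphically in $\lambda$ to the tube $\{\,|\Im\lambda|<\gamma_q\rho\,\}$ and the underlying kernel stays bounded up to the boundary, so that $\widetilde f(\,\cdot\pm i\gamma_q\rho,\cdot)$ is a bona fide bounded operator on each endpoint. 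Accordingly I would first record the three inputs I need: the Plancherel identity $\|f\|_{L^2(S)}^2\asymp\int_\R\|\widetilde f(\lambda,\cdot)\|_{L^2(N)}^2|\cfct(\lambda)|^{-2}\,d\lambda$; the shifted Hausdorff--Young inequality for $S$ (the analogue of \eqref{ineq-HY-root-2}); and the $\cfct$-asymptotics \eqref{eqn.c-est1}, which will convert the sublevel-set bookkeeping into the stated powers of $|\lambda|$ and $|\cfct(\lambda)|^{-2}$.

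For part~(i) I would fix $1<q\le2$ and regard $\Phi_\pm f(\lambda)=\|\widetilde f(\lambda\pm i\gamma_q\rho,\cdot)\|_{L^q(N)}$ as a scalar function of $\lambda$ on $\R$ with base measure $|\cfct(\lambda)|^{-2}\,d\lambda$. The two endpoints fed into the interpolation are the strong estimate at $p=q$ (where $r=q'$ and the weight exponent $r/p'-1$ vanishes), which is precisely the shifted Hausdorff--Young inequality, and a weak-type estimate as $p\to1$ (where $r\to1$ and the weight exponent tends to $-1$), which I would extract from the uniform bound $|\widetilde f(\lambda\pm i\gamma_q\rho,n)|\le C\|f\|_1$ together with \eqref{eqn.c-est1}, by the very sublevel-set computation carried out in the proof of Proposition~\ref{prop.abstract-interpolate}. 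The weight $(|\lambda|\,|\cfct(\lambda)|^{-2})^{r/p'-1}$ is then dictated precisely by the requirement that this endpoint computation close, and it is cleanest to produce the intermediate $p$ by an analytic family $T_z$ (as in the second proof of Lemma~\ref{lemma.NPP-2}) carrying the factor $(|\lambda|\,|\cfct(\lambda)|^{-2})^{z}$; interpolating at $\theta=q'/p'$ yields the claimed relation $\tfrac1r=1-\tfrac{q'-1}{p'}$.

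For part~(ii) I would fix $2\le q<\infty$, shift by $\gamma_{q'}\rho$ (admissible since $\gamma_{q'}\ge0$ when $q'\le2$), and follow Theorem~\ref{thm.HY-qlarge} with $\mathcal F$ replaced by $f\mapsto\|\widetilde f(\,\cdot\pm i\gamma_{q'}\rho,\cdot)\|_{L^{q'}(N)}$. Lorentz interpolation (Theorem~\ref{thm.lorentz-inter}) between the endpoint at source exponent $q'$ (the shifted Hausdorff--Young inequality, with outer exponent $q$) and the sup endpoint at source exponent $1$ gives, with $\theta=q/p$, the bound $\|\Phi_\pm f\|^*_{p,p}\le C\|f\|^*_{p',p}$ for every $p\ge q$. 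Factoring $f=g\cdot\psi^{1-2/p}$ with the hyperbolic Young function $\psi$ on $S$ and applying O'Neil's Theorem~\ref{thm.Oneil} with exponent $p$ in place of $q$, so that $\|f\|^*_{p',p}=\|g\,\psi^{2/p-1}\|^*_{p',p}\le\|g\|_p\,\|\psi^{2/p-1}\|^*_{p/(p-2),\infty}$, then bounds the right-hand side by $\|f\|_{(p)}$, as desired.

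The main obstacle I expect is not the interpolation bookkeeping but the handling of the inner $L^{q'}(N)$ (respectively $L^q(N)$) norm together with the contour shift. One must (a) verify the holomorphic continuation and the boundary kernel bounds uniformly in the $N$-variable so that the shifted transform has a well-defined $L^{q'}(N)$-norm for $f$ merely in $L^p(S)$ or $L^{(p)}(S)$, and (b) ensure that both interpolation endpoints can be stated with the \emph{same} fixed inner exponent, so that the interpolation is performed only in the spectral variable $\lambda$ and does not perturb the inner norm; naively interpolating the mixed norm would instead produce an inner exponent $pq'/q\ne q'$. Establishing the non-Hausdorff--Young endpoint with the correct inner norm --- a uniform-in-$\lambda$ restriction estimate for $\widetilde f(\lambda,\cdot)$ on $N$ --- is the step where the geometry of $S=NA$, rather than abstract interpolation, really enters.
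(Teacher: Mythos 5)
Your skeleton for both parts (real interpolation off a weak endpoint for (i), Lorentz interpolation plus an O'Neil/Young-function factorization for (ii), with the shifted Hausdorff--Young inequality as the strong endpoint) is the same as in the Ray--Sarkar argument that the paper records after the theorem, and you correctly identify the hard geometric input: one needs $\sup_\lambda\|\widetilde{f}(\lambda\pm i\gamma_q\rho,\cdot)\|_{L^q(N)}\leq C\|f\|_1$, a genuine restriction estimate on $N$; your earlier claim that the weak endpoint follows from the pointwise bound $|\widetilde{f}(\lambda,n)|\leq C\|f\|_1$ is inconsistent with this, since $N$ has infinite measure and a sup bound does not control an $L^q(N)$-norm. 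Beyond that, two steps of your plan fail as written. In (i), interpolating a single operator between two fixed endpoint estimates can only yield conclusions in those same fixed spaces, so your unweighted operator $\Phi_\pm$ on the measure $|\cfct(\lambda)|^{-2}d\lambda$ can never produce the $p$-dependent weight $(|\lambda||\cfct(\lambda)|^{-2})^{r/p'-1}$; and your proposed remedy, a Stein analytic family $T_z$ carrying $(|\lambda||\cfct(\lambda)|^{-2})^{z}$, is unavailable here because $\Phi_\pm$ is only \emph{sublinear} (the inner $L^q(N)$-norm; Stein's theorem needs a linear family, and linearizing forces mixed-norm interpolation, which perturbs the inner exponent --- exactly the defect you flag yourself) and because the $p=1$ endpoint is only of weak type, which complex interpolation does not accept. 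The device that actually closes the argument, and which the paper spells out, is to build a \emph{fixed} weight into the operator, $Tf(\lambda)=\|\widetilde{f}(\lambda+i\gamma_q\rho,\cdot)\|_{L^q(N)}(|\lambda||\cfct(\lambda)|^{-2})^{q/q'}$, and simultaneously change the spectral measure to $d\overline{\mu}(\lambda)=|\lambda|^{-q}|\cfct(\lambda)|^{-2(1-q)}d\lambda$: then strong type $(q,q')$ reduces to the shifted Hausdorff--Young inequality, weak type $(1,1)$ is the sublevel-set computation (using that $\frac{d}{ds}\bigl(s|\cfct(s)|^{-2}\bigr)\asymp|\cfct(s)|^{-2}$, as in the proof of theorem \ref{thm.HL-ver3}), and Marcinkiewicz interpolation for this scalar sublinear operator, at $\theta=q'/p'$, unwinds to precisely the stated weighted inequality.

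In (ii) you use the wrong Young function, and what you prove is strictly weaker than the statement. The norm $\|f\|_{(p)}$ is defined with the weight $J^{p-2}$, where $J(x)=(\sinh\frac{r(x)}{2})^m(\sinh r(x))^l$. If you factor $f=g\,\psi_h^{2/p-1}$ with the hyperbolic Young function $\psi_h(x)=\cosh(r(x))J(x)$, then O'Neil's theorem \ref{thm.Oneil} gives $\|f\|^*_{p',p}\leq\|g\|_p\,\|\psi_h^{2/p-1}\|^*_{p/(p-2),\infty}$ with $\|g\|_p=\bigl(\int_S|f|^p\psi_h^{p-2}\,dx\bigr)^{1/p}$, and since $\cosh\geq 1$ and $p\geq 2$ this quantity \emph{dominates} $\|f\|_{(p)}$; the comparison goes the wrong way, so you only obtain the inequality with $\psi_h^{p-2}$ in place of $J^{p-2}$. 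The point the paper singles out as ``an important difference'' is exactly that on $S$ the Jacobian $J$ \emph{itself} is a Young function --- the Haar measure of $\{x\in S:J(x)\leq t\}$ is $O(t)$ --- so one must take $\psi=J$, $g=fJ^{(p-2)/p}$; then $\|g\|_p=\|f\|_{(p)}$ exactly, $J^{-(p-2)/p}\in L^{p/(p-2),\infty}(S)$, and the H\"older/O'Neil step closes with the stated norm. (The hyperbolic Young function is the right object in the root-system theorem \ref{thm.HY-qlarge}, whose conclusion is formulated with $\psi_h$; the present theorem is formulated with $J$, and that forces the choice.)
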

Here $\|f\|_{(p)}=\Bigl(\int_S|f(x)|^pJ(x)^{p-2}\,dx\Bigr)^{1/p}$, 
where $J(x)=(\sinh\frac{r(x)}{2})^m(\sinh r(x))^l$ is essentially the Jacobian associated with polar coordinates in $S$. There are subtle technical issues pertaining to the proper domain of definition of the Helgason--Fourier transform that make the proof of theorem \ref{thm.Ray-Sarkar} more involved than what might be expected, but the strategy of proof is still to use interpolation. Indeed the main task is again to identify two suitable measure spaces and a sublinear operator in such a way that the abstract interpolation machinery produces the desired inequality. In statement (i), one considers measure spaces $(S,dx)$ and $(\R^\times,d\overline{\mu}(\lambda))$, where $d\overline{\mu}(\lambda)=|\lambda|^{-q}|\cfct(\lambda)|^{-2(1-q)}\,d\lambda$, and a sublinear operator $T$ defined for $f\in L^1(S)+L^q(S)$ by $Tf(\lambda)=\|\widetilde{f}(\lambda+i\gamma_q\rho,\cdot)\|_{L^q(N)}(|\lambda||\cfct(\lambda)|^{-2})^{q/q'}$. It can be verified that $T$ is of strong type $(q,q')$ (which follows from a Hausdorff--Young inequality and the Plancherel theorem) and weak type $(1,1)$ (which requires more work). An interpolation argument yields the conclusion in (i).

In (ii), it is convenient to consider measure spaces $(S,dx)$ and $(\R,|\cfct(\lambda)|^{-2}d\lambda)$ and a sublinear operator $T$ defined for $f\in L^1(S)+L^{p',1}(S)$ by $Tf(\lambda)=\|\widetilde{f}(\lambda+i\gamma_{q'}\rho,\cdot)\|_{L^{q'}(N)}$. While we shall not give the details of their interpolation argument, it also uses interpolation between Lorentz spaces, showing that for $q\leq<\infty$ and $1\leq s\leq\infty$, it holds that
$\|Tf\|_{p,s}^*\leq C_{p,q}\|f\|^*_{p',s}$, cf. \cite[Eqn. (4.27)]{Ray-Sarkar-trans}. An important difference is that Ray and Sarkar use the function $J$ as Young function, which also dictates their definition of $\|f\|_{(p)}$. Let $u=\frac{p}{p-2}$ and $g(x)=f(x)J(x)^{1/u}$. Then $g\in L^p(S)$ and $\|g\|_p=\|f\|_{(p)}$. Moreover $m(\{x\in S\|:\| J(x)\leq t\})\leq Ct$ for all $t>0$, for some constant $C$, where $m$ is Haar measure on $S$ (so $J$ is indeed a Young function in our terminology). Consequently, $J(x)^{-1/u}\in L^{u,\infty}(S)$. It follows by H\"older's inequality that $\|f\|^*_{p',p}\leq C_p\|g\|_p\|J^{-1/u}\|_{u,\infty}$, and therefore (taking $s=p$)
$\|Tf\|_{p,p}^*\leq C_{p,q}\|g\|_p$, from which (ii) follows.
\medskip

In order to generalize theorem \ref{thm.Ray-Sarkar}, we must therefore choose suitable measure spaces, define convenient sublinear operators $T$ and finally choose a good Young function.

\begin{lemma}\label{lemma-J}
Let $(\a,\Sigma,m)$ be a fixed root datum. The function \[\psi(x)=J(x)=\prod_{\alpha\in\Sigma^+}|e^{\alpha(x)}-e^{-\alpha(x)}|^{m_\alpha}\] is a Young function.
\end{lemma}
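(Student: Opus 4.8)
The plan is to reduce the claim to a direct estimate of the weighted sublevel integral and to exploit the exponential growth of $J$ by a one--parameter flow argument. Since $d\mu=J\,dx$, the defining inequality reads
\[
\mu(\{x:J(x)\le t\})=\int_{\{J\le t\}}J(x)\,dx\le Ct ,
\]
and by $W$--invariance it suffices to integrate over the open chamber $\a^+$. On $\a^+$ one has the factorization $J(x)=e^{2\rho(x)}\prod_{\alpha\in\Sigma^+}(1-e^{-2\alpha(x)})^{m_\alpha}$, so that $0<J(x)\le e^{2\rho(x)}$ and $J$ grows like $e^{2\rho}$ away from the walls. In rank one this already settles the matter: writing $J$ through $\sinh$ and integrating explicitly over $\{0<x\le x_t\}$, where $J(x_t)=t$, gives $\int_0^{x_t}J\,dx\asymp t$, the faster $t^2$--decay near the origin only improving the bound for small $t$. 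Note that the elementary estimate \eqref{ineq.sublevel} is useless here, since it would require $\int_\a dx<\infty$; a genuinely different argument is needed.

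For general rank I would use that $J$ increases exponentially along the direction $H=x_{2\rho}$ dual to $2\rho$. A direct computation, together with $\coth\ge 1$ on $\a^+$ and $\langle\alpha,\rho\rangle>0$ for every $\alpha\in\Sigma^+$, gives the pointwise lower bound
\[
\partial_H J(x)=J(x)\sum_{\alpha\in\Sigma^+}m_\alpha\coth(\alpha(x))\,\langle\alpha,2\rho\rangle\ \ge\ \langle 2\rho,2\rho\rangle\,J(x)=4\|\rho\|^2\,J(x).
\]
Since $H$ is a constant, divergence--free field and $J$ vanishes on $\partial\a^+$, integrating $J\le(4\|\rho\|^2)^{-1}\partial_H J$ along each line in the direction $H$ (on which $J$ rises monotonically from $0$ to $\infty$) yields the per--line bound $\int J\,\mathbf{1}_{\{J\le t\}}\le (4\|\rho\|^2)^{-1}|H|\,t$. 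One would then integrate this estimate over the hyperplane transverse to $H$.

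The hard part, and what I expect to be the genuine obstruction, is precisely this transverse integration. The lines in the direction $H$ that meet $\a^+$ sweep out a set of \emph{infinite} transverse measure, because $\a^+$ is an unbounded cone; so the crude per--line bound only produces $\int_{\{J\le t\}}J\,dx\le Ct\cdot(\text{transverse measure})=\infty$. Slicing $\{x\in\a^+:2\rho(x)\le\log t\}$ by the level sets of $2\rho$ reveals the true behaviour: the cross--section of the cone at level $s$ scales like $s^{\,n-1}$, whence
\[
\int_{\{J\le t\}}J\,dx\ \asymp\ \int_0^{\log t}e^{s}\,s^{\,n-1}\,ds\ \asymp\ t\,(\log t)^{\,n-1}.
\]
Thus the bound is clean only in rank one; for $n\ge 2$ the corner contributions near the walls but far from the origin force a logarithmic factor. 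An explicit computation in type $A_2$ (all multiplicities $1$) already gives $J\asymp e^{2(t_1+t_2)}$ in the coordinates $t_i=\alpha_i(x)$ and hence $\mu(\{J\le t\})\asymp t\log t$.

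Consequently I would expect the lemma, in the strict form $\mu(\{J\le t\})\lesssim t$, to hold unconditionally only in rank one—which is exactly the setting of the Ray--Sarkar result being generalized. In higher rank the natural remedies are either to pass to the strengthened weight $\psi_h(x)=\cosh(\|x\|)J(x)$ already employed in Theorem \ref{thm.HY-qlarge}, whose radial factor restores the integrability required by \eqref{ineq.sublevel}, or to relax the notion of Young function so as to permit a factor $(\log t)^{\,n-1}$ (which, however, would weaken the subsequent $L^{r,\infty}$ membership used through O'Neil's theorem). Identifying how the author circumvents this logarithmic loss is, I believe, the crux of the argument.
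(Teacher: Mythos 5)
You have, in effect, refuted the lemma rather than proved it --- and your refutation is correct. The statement holds only in rank one; for $n=\dim\a\ge 2$ the $\mu$-measure of the sublevel sets of $J$ is too large by exactly the logarithmic factor you identified. Your heuristic is easy to make rigorous, and only the lower bound is needed (so the behaviour of $J$ near the walls, where $J\not\asymp e^{2\rho(x)}$, is irrelevant). For instance, for $A_1\times A_1$ with both multiplicities equal to $1$ and $\alpha_i(x)=x_i$, one has $J(x)=e^{x_1+x_2}(1-e^{-2x_1})(1-e^{-2x_2})\asymp e^{x_1+x_2}$ on the region $x_1,x_2\ge 1$, and $J\le e^{x_1+x_2}$ there, so
\[
\mu(\{J\le t\})\;\ge\;(1-e^{-2})^2\int_{\{x_1,x_2\ge 1,\;x_1+x_2\le \log t\}}e^{x_1+x_2}\,dx_1\,dx_2\;=\;(1-e^{-2})^2\bigl(t(\log t-3)+e^2\bigr),
\]
which rules out the bound $\mu(\{J\le t\})\lesssim t$. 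The same computation deep inside the chamber of any rank-$n$ datum, where $J\asymp e^{2\rho(x)}$ and the cross sections of $\a^+$ by level sets of $2\rho$ grow like $\sigma^{n-1}$, gives $\mu(\{J\le t\})\gtrsim t(\log t)^{n-1}$, as you predicted; the irreducible case $A_2$ fails just as the reducible one does.

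The paper's own proof is precisely the short argument you dismissed at the outset as ``useless here'', and your objection is the exact point where it breaks down. The paper bounds $J(x)\le Ce^{2\|\rho\|\|x\|}$, notes that $\int_0^\infty s^{n-1}e^{-2\|\rho\|s}\,ds<\infty$, and invokes the discussion following example \ref{example.young}. But inequality \eqref{ineq.sublevel} applies to functions of the form $\psi=hJ$, where $J$ is the \emph{density of the ambient measure} $\mu$ and $h^{-1}$ is integrable: on $\{\psi\le t\}$ one has $J\le t/h$, whence $\mu(\{\psi\le t\})\le t\int_\a h^{-1}\,dx$. Taking $\psi=J$ forces $h\equiv 1$, which is inadmissible (your point that one would need $\int_\a dx<\infty$); taking $h(x)=e^{2\|\rho\|\|x\|}$ instead shows that $e^{2\|\rho\|\|x\|}J(x)$ --- a cousin of the hyperbolic weight $\psi_h$ of theorem \ref{thm.HY-qlarge} --- is a Young function, or equivalently that the sublevel sets of $J$ have measure $O(t)$ for the measure $e^{-2\|\rho\|\|x\|}J(x)\,dx$; neither statement is the assertion of lemma \ref{lemma-J}. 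The proof thus conflates the two roles played by $J$ (the weight whose sublevel sets are estimated versus the density of $\mu$), and your counterexample shows the conflation is fatal, not cosmetic. The gap propagates: the proof of theorem \ref{thm.HL-ver3}(ii) uses lemma \ref{lemma-J} exactly to place $J^{-(p-2)/p}$ in $L^{p/(p-2),\infty}(\a,d\mu)^W$, which fails in rank $\ge 2$ by your computation (a similar objection applies to the claim, in the proof outline of proposition \ref{prop.Dunkl-RS}, that $\omega_m$ is a Young function for $\omega_m\,dx$). So your diagnosis is the right conclusion: the lemma and its applications are sound in rank one, while in higher rank one must either pass to a genuine Young function such as $\psi_h(x)=\cosh(\|x\|)J(x)$ --- thereby changing the weight in $\|\cdot\|_{(p)}$ --- or weaken the definition and track the resulting logarithmic loss through O'Neil's theorem.
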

\begin{proof}
Since $|e^{\alpha(x)}-e^{-\alpha(x)}|\leq 2e^{\|\alpha\|\|x\|}$ for every $\alpha\in\Sigma^+$, $x\in\a$, it follows that $|J(x)|\leq C e^{2\|\rho\|\|x\|}$ for all $x\in\a$, where $\rho=\frac{1}{2}\sum_{\alpha\in\Sigma^+}m_\alpha\alpha$. Since $\int_0^\infty s^{n-1}e^{-2\|\rho\|s}ds<\infty$ for all $n\in\N$, it follows as in the discussion succeeding example \ref{example.young} that $\psi$ is a Young function.
\end{proof}
The following result is a direct analogue of \cite[Theorem~4.11]{Ray-Sarkar-trans}, and our proof follows theirs closely. The main addition is that we once again have to choose the underlying measure spaces and the operator $T$ in such a way that the weak type $(1,1)$ estimate -- now with respect to a weighted measure in the $n$-dimensional vector space $i\a^*$. As in the  proof of theorem \ref{thm.HY-qlarge} this is essentially achieved by working our way backwards from the desired weak type $(1,1)$ estimate, modifying $T$ accordingly. We shall not work with an arbitrary Young function but rather the choice in lemma \ref{lemma-J}. Accordingly $L^{(q)}(\a)$, $q>2$ denotes the space of measurable $W$-invariant functions $f:\a\to\C$ for which
\[\|f\|_{(q)}:=\Bigl(\int_\a|f(x)|^qJ(x)^{q-2}d\mu(x)\Bigr)^{1/q}<\infty.\]

\begin{theorem}\label{thm.HL-ver3}\begin{enumerate}[label=(\roman*)]
\item Let $1<q\leq 2$ be fixed. For $f\in L^p(\a,d\mu)^W$ with $1<p\leq q$,
\[\Bigl(\int_{i\a^*}|\mathcal{F}f(\lambda+\eta)|^r(\|\lambda\||\cfct(\lambda)|^{-2})^{r/p'-1}\,d\nu(\lambda)\Bigr)^{1/r}\leq C_{p,q,\eta}\|f\|_p\]
for every $\eta$ in the interior of $C(\epsilon_p\rho)$, where $\frac{1}{r}=1-\frac{q'-1}{p'}$.
\item Let $2\leq q<\infty$ be fixed. For $f\in L^{(p)}(\a)$ with $q\leq p<\infty$,
\[\Bigl(\int_{i\a^*}|\mathcal{F}f(\lambda+\eta)|^p\,d\nu(\lambda)\Bigr)^{1/p}\leq C_{p,q,\eta}\|f\|_{(p)}\]
for every $\eta$ in the interior of $C(\epsilon_p\rho)$.
\end{enumerate}
\end{theorem}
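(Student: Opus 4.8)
The plan is to prove both parts by the same interpolation-between-Lorentz-spaces strategy used in the proofs of Theorem~\ref{thm.HY-qlarge} and in the sketch of Theorem~\ref{thm.Ray-Sarkar}, reducing everything to a Lorentz-space estimate for an auxiliary sublinear operator $T$ and then absorbing the weight $J^{1-2/q}$ (or its dual counterpart) via the H\"older inequality for Lorentz spaces together with Lemma~\ref{lemma-J}. The use of the shifted argument $\lambda+\eta$ with $\eta$ in the interior of $C(\epsilon_p\rho)$ is precisely what makes the complex Hausdorff--Young estimate \eqref{ineq-HY-root-2} of Lemma~\ref{lemma.NPP-2} available, and this replaces the holomorphic extension used by Ray and Sarkar; since $|\varphi_\lambda|\le 1$ for $\lambda\in C(\rho)+i\a^*$ and $C(\epsilon_p\rho)\subset C(\rho)$ for $p\ge 2$, the transform $\mathcal{F}f(\lambda+\eta)$ is well defined and bounded on the relevant domain.

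For part~(ii), let me take the measure spaces $(X,d\mu)=(\a/W,d\mu)$ and $(Y,d\nu)=(i\a^*,\tfrac{1}{|W|}d\nu(\lambda))$, and define the sublinear operator by $T_\eta f(\lambda)=\mathcal{F}f(\lambda+\eta)$. First I would verify two endpoint estimates. The $L^\infty$-endpoint $\|T_\eta f\|^*_{\infty,\infty}=\|T_\eta f\|_\infty\le C_{p,\eta}\|f\|_1$ follows from part~(b) of Lemma~\ref{lemma.NPP-2} (the boundedness of $\mathcal{F}f(\lambda+\eta)$ in terms of $\|f\|_p$, specialized appropriately to the $L^1$ setting via density), while the $L^2$-endpoint $\|T_\eta f\|^*_{2,\infty}\le\|T_\eta f\|^*_{2,2}\le C\|f\|_2\le C\|f\|^*_{2,1}$ comes from \eqref{ineq-HY-root-2} at $p=2$ together with the Plancherel theorem. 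Interpolating between these restricted weak types via Theorem~\ref{thm.lorentz-inter} (exactly as in the opening lines of the proof of Theorem~\ref{thm.HY-qlarge}) yields $\|T_\eta f\|^*_{p,p}\le C_{p,\eta}\|f\|^*_{p',p}$ for $2\le p<\infty$. Now set $g(x)=f(x)J(x)^{1-2/p}$, so that $\|g\|_p=\|f\|_{(p)}$ by the definition of $\|\cdot\|_{(p)}$, and observe by Lemma~\ref{lemma-J} and the sublevel-set computation in the proof of Theorem~\ref{thm.HY-qlarge} that $J^{-(1-2/p)}=J^{2/p-1}$ lies in $L^{u,\infty}$ with $u=\frac{p}{p-2}$. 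H\"older's inequality for Lorentz spaces (the Proposition following Theorem~\ref{thm.lorentz-inter}) then gives $\|f\|^*_{p',p}\le C_p\|g\|_p\|J^{2/p-1}\|^*_{u,\infty}$, and combining with the interpolated estimate produces $\|T_\eta f\|^*_{p,p}\le C_{p,\eta}\|g\|_p=C_{p,\eta}\|f\|_{(p)}$; since $\|T_\eta f\|^*_{p,p}=\|T_\eta f\|_p$ this is the claim of~(ii).

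For part~(i), which handles $1<p\le q\le 2$, I would again work with $(X,d\mu)$ but now with a weighted target measure $d\overline{\nu}(\lambda)=(\|\lambda\||\cfct(\lambda)|^{-2})^{-q'}|\cfct(\lambda)|^{-2}\,d\nu$ (patterned on the choice $d\overline{\mu}$ in the Ray--Sarkar sketch of Theorem~\ref{thm.Ray-Sarkar}(i)), and define $T_\eta f(\lambda)=\mathcal{F}f(\lambda+\eta)\,(\|\lambda\||\cfct(\lambda)|^{-2})^{q/q'}$. The two endpoints to establish are that $T_\eta$ is of strong type $(q,q')$, which follows from the Hausdorff--Young estimate \eqref{ineq-HY-root-2} of Lemma~\ref{lemma.NPP-2} once the weight is matched to the measure $d\overline{\nu}$, and that $T_\eta$ is of weak type $(1,1)$ from $(X,d\mu)$ into $(Y,d\overline{\nu})$. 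Interpolating between $(1,1)$ and $(q,q')$ via Theorem~\ref{thm.lorentz-inter} with the parameters forced by \eqref{eqn.interpolate-indices} gives the index relation $\frac{1}{r}=1-\frac{q'-1}{p'}$ in the statement, and unwinding the weight $(\|\lambda\||\cfct(\lambda)|^{-2})^{q/q'}$ through the measure $d\overline{\nu}$ recovers the exponent $r/p'-1$ on the weight in the conclusion.

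The main obstacle, in both parts but especially in part~(i), is the weak type $(1,1)$ estimate. As the footnote and the parameter constraints in Proposition~\ref{prop.abstract-interpolate} make explicit, this is the step where the $\cfct$-function asymptotics \eqref{eqn.c-est1} and the value $\beta=\sum_{\alpha\in\Sigma_0^+}(m_\alpha+m_{2\alpha})$ enter, and where one must verify that the superlevel set $E_t(f)=\{\lambda:|T_\eta f(\lambda)|>t\}$ is contained in a set $A_t(f)=\{\|\lambda\|>a_t\}$ whose $\overline{\nu}$-measure decays like $C\|f\|_1/t$. Concretely, the weight $(\|\lambda\||\cfct(\lambda)|^{-2})^{-q'}|\cfct(\lambda)|^{-2}$ combines with $|\cfct(\lambda)|^{-2}\asymp\|\lambda\|^{2|\Sigma_0^+|}(1+\|\lambda\|)^{\beta-2|\Sigma_0^+|}$ to produce a radial integrand of the form $\|\lambda\|^{\gamma}(1+\|\lambda\|)^{\delta}$, and one needs the resulting integral $\int_{a_t}^\infty s^{\gamma+\delta+n-1}\,ds$ to converge and scale correctly in $a_t=(t/C\|f\|_1)^{q'/q}$ (using $|\mathcal{F}f(\lambda+\eta)|\le C\|f\|_1$). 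I expect the exponent bookkeeping here to be the genuinely delicate point, whereas the density argument extending the estimate from simple functions to all of $L^p$ or $L^{(p)}$ is routine, as in the closing line of the proof of Theorem~\ref{thm.HY-qlarge}.
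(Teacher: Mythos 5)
Your overall plan --- Lorentz-space interpolation for a shifted operator $T_\eta f(\lambda)=\mathcal{F}f(\lambda+\eta)$, with the strong endpoint supplied by \eqref{ineq-HY-root-2} and the weight absorbed through lemma \ref{lemma-J} and H\"older's inequality for Lorentz spaces --- is indeed the paper's plan, but both parts have genuine gaps at exactly the steps you defer or gloss. In part (ii), your second interpolation endpoint is $\|T_\eta f\|^*_{2,2}\leq C\|f\|_2$, justified by ``\eqref{ineq-HY-root-2} at $p=2$ together with the Plancherel theorem''. This endpoint is unavailable when $\eta\neq 0$: the set $C(\epsilon_2\rho)=\{0\}$ has empty interior, so lemma \ref{lemma.NPP-2} says nothing about a nonzero shift at the $L^2$ level, and the Plancherel theorem is precisely the case $\eta=0$; worse, for a general $f\in L^2(\a,d\mu)^W$ the function $\lambda\mapsto\mathcal{F}f(\lambda+\eta)$ need not even be defined, since the holomorphic extension into a tube requires $f\in L^p$ with $p<2$. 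This is exactly why the paper keeps the fixed index $q$ in play: its second endpoint is $\|Tf\|^*_{q,\infty}\leq C_q\|f\|^*_{q',1}$, obtained for $q>2$ from lemma \ref{lemma.NPP-2}(a) because then $q'<2$ (the case $q=2$ forcing $\eta=0$, where Plancherel applies), and the interpolation runs between the pairs $(1,\infty)$ and $(q',q)$ rather than $(1,\infty)$ and $(2,2)$. Note that your route, if it worked, would prove a statement independent of $q$ and valid for every $\eta$, which should itself have been a warning sign.

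In part (i) the exponents you propose are internally inconsistent and, once corrected, still fail in rank $n\geq 2$; the failure occurs at the weak type $(1,1)$ estimate that you explicitly postpone as ``bookkeeping''. First, with operator weight $(\|\lambda\||\cfct(\lambda)|^{-2})^{q/q'}$ and measure density $(\|\lambda\||\cfct(\lambda)|^{-2})^{-q'}|\cfct(\lambda)|^{-2}$, the strong type $(q,q')$ computation leaves the factor $(\|\lambda\||\cfct(\lambda)|^{-2})^{q-q'}$, which reduces to \eqref{ineq-HY-root-2} only if $q=2$; matching the weight to the measure forces the exponent $-q$ (the Ray--Sarkar rank-one choice), not $-q'$. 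Second, and decisively: take the corrected rank-one exponents $q/q'$ and $-q$ and actually run the weak type argument. With $G(s)=s^{2|\Sigma_0^+|+1}(1+s)^{\beta-2|\Sigma_0^+|}$ one has $\|\lambda\||\cfct(\lambda)|^{-2}\asymp G(\|\lambda\|)$, $|\cfct(\lambda)|^{-2}\asymp G'(\|\lambda\|)$, and $G(s)\asymp s^{\beta+1}$ at infinity, so for $a_t=\bigl(t/(C\|f\|_1)\bigr)^{q'/q}$ the superlevel set $\{\lambda:\|\lambda\||\cfct(\lambda)|^{-2}>a_t\}$ has $\overline{\nu}$-measure
\[
\int_{\{G(s)\gtrsim a_t\}}G(s)^{-q}G'(s)\,s^{n-1}\,ds\;\asymp\;a_t^{\frac{\beta+n}{\beta+1}-q}\;\asymp\;\Bigl(\frac{t}{C\|f\|_1}\Bigr)^{\frac{1}{q-1}\bigl(\frac{\beta+n}{\beta+1}-q\bigr)},
\]
and the exponent of $t$ equals $-1$ precisely when $\frac{\beta+n}{\beta+1}=1$, i.e.\ when $n=1$. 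For $n\geq 2$ the polar Jacobian $s^{n-1}$ destroys the scaling, and approximate-identity examples show this is a failure of the weak type $(1,1)$ inequality itself, not merely of the method of bounding $E_t(f)$ by a superlevel set. This is the whole reason the paper's proof dilates the exponents by the dimension, taking $Tf(\lambda)=|\mathcal{F}f(\lambda+\eta)|(\|\lambda\||\cfct(\lambda)|^{-2})^{nq/q'}$ and $d\overline{\nu}(\lambda)=\|\lambda\|^{-nq}|\cfct(\lambda)|^{-2(1-nq)}\,d\lambda$, so as to compensate the factor $s^{n-1}$ and recover the decay $\|f\|_1/t$. Since you never pin down the exponents and never perform this computation, part (i) of your proposal is an outline of the paper's strategy rather than a proof: the ``delicate exponent bookkeeping'' you set aside is where the actual content of the theorem lies.
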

Note that the special case $p=q=r=2$ recovers the a special case of the Hausdorff--Young inequality in lemma \ref{HL-ineq2} and the Plancherel formula as an inequality in the case $\eta=0$.

\begin{proof}[Proof of (i)]
Fix $q\in(1,2]$ and consider the measure spaces $(\a_+,d\mu)$ and $(i\a^+,d\overline{\nu}(\lambda)$, where 
$d\overline{\nu}(\lambda)=\|\lambda\|^{-nq}|\cfct(\lambda)|^{-2(1-nq)}\,d\lambda$. Define $Tf$, $f\in L^p(\a,d\mu)^W$, by $Tf(\lambda)=|\mathcal{F}f(\lambda+\eta)|(\|\lambda\||\cfct(\lambda)|^{-2})^{\frac{q\cdot n}{q'}}$. It then follows from lemma \ref{lemma.NPP-2} that
\[
\|Tf\|_{q'}^{q'} =\int_{i\a^*} |Tf(\lambda)|^{q'}\|\lambda\|^{-nq}|\cfct(\lambda)|^{-2(1-nq)}\,d\lambda=
\int_{i\a^*}|\mathcal{F}f(\lambda+\eta)|^{q'}|\cfct(\lambda)|^{-2}\,d\lambda \leq C_{p,q,\eta}\|f\|_q^{q'},\]
so $T$ is of strong type $(q,q')$.

The operator $T$ is furthermore of weak type $(1,1)$, as we shall now prove. The argument is nearly the same as in the proof of theorem \ref{thm.HY-qlarge} but relies on a clever trick employed in the proof of \cite[Theorem~4.11]{Ray-Sarkar-trans}. For $t>0$ define the set $E_t(f)=\{\lambda\in i\a^*\,:\,|Tf(\lambda)>t\}$, that is,
\[E_t(f)=\{\lambda\in i\a^*\,:\,(\|\lambda\||\cfct(\lambda)|^{-2})^{nq/q'}|\mathcal{F}f(\lambda+\eta)|>t\}.\]
According to lemma \ref{lemma.NPP-2}(b), $E_t(f)$ is contained in the set
\[\begin{split}
A_t(f)&:=\{\lambda\in i\a^*\,:\, c_p(\|\lambda\||\cfct(\lambda)|^{-2})^{nq/q'}\|f\|_1>t\}\\
&=\{\lambda\in i\a^*\,:\, \|\lambda\||\cfct(\lambda)|^{-2}>a_t\},\quad a_t=\Bigl(\frac{t}{c_p\|f\|_1}\Bigr)^{\frac{q'}{nq}},
\end{split}\]
We can now invoke the trick of Ray and Sarkar: Noting that 
\[\|\lambda\||\cfct(\lambda)|^{-2}\asymp \|\lambda\|^{2\vert\Sigma_0^+|+1}(1+\|\lambda\|)^{\beta-2|\Sigma_0^+|}=G(\|\lambda\|),\]
where $G(s):=s^{2|\Sigma_0^+|+1}(1+s)^{\beta-2|\Sigma_0^+|}$, it it seen that for $s\geq 0$, $G'(\|\lambda\|)\asymp|\cfct(\lambda)|^{-2}$. It follows that
\[\begin{split}
|A_t(f)|&=\int_{i\a^*}\mathbf{1}_{A_t(f)}(\lambda) (\|\lambda\||\cfct(\lambda)|^{-2})^{-nq}|\cfct(\lambda)|^{-2}\,d\lambda\lesssim \int_{i\a^*} G(\|\lambda\|)^{-q}G'(\|\lambda\|)\,d\lambda\\
&=C\int_{a_t}^\infty s^{-nq}s^{n-1}\,ds\quad\text{ by passage to polar coordinates in }\a\\
&=C'a_t^{n-nq} = C''\Bigl(\frac{t}{\|f\|_1}\Bigr)^{\frac{q'}{nq}(n-nq)}= C'''\frac{\|f\|_1}{t}
\end{split}\]
This proves that $T$ is of weak type $(1,1)$. In particular, it follows by interpolation that $T$ is of strong type $(p,r)$ whenever $p$ satisfies the identity $\frac{1}{p}=\frac{1-t}{1}+\frac{t}{q}$ for some $t\in(0,1)$. With $p$ being given in the hypothesis of (i), this identity holds precisely when $\frac{1}{r}=1-t+\frac{t}{q'}=\frac{p'q-q'}{p'q}$, which establishes the asserted inequality in (i).
\end{proof}

\begin{proof}[Proof of (ii)]
Now consider the measure spaces $(\a/W,d\mu)$ and $(i\a^*/W,d\nu(\lambda))$, together with the operator $Tf(\lambda)=|\mathcal{F}f(\lambda+\eta)|$, where $\eta\in C(\epsilon_p\rho)^\circ$ is fixed.  As in the proof of \cite[Theorem~4.11]{Ray-Sarkar-trans} we shall use lemma \ref{lemma.NPP-2} and $J(x)$ as Young function in a double interpolation argument as follows.

Fix a function $f\in L^{(p)}(\a)$ where $p\geq q\geq 2$ and observe that
\begin{equation}\label{eqn.interpolate1}
\|Tf\|^+_{\infty,\infty}\leq\|f\|^*_{1,1}.
\end{equation}
Indeed, it holds more generally that  $\|Tf\|^*_{\infty,\infty}=\|Tf\|_{\infty}\leq\|f\|_1=\|f\|_{1,1}^*$, which is clear for $\eta=0$ (where it amounts to the Riemann--Lebesgue lemma) and follows in general from the estimate $|\varphi_\lambda(x)|\leq 1$ for $x\in\a$ and $\eta\in C(\epsilon_p\rho)\subset C(\epsilon_1\rho)=C(\rho)$.

In addition
\begin{equation}\label{eqn.intermediate}
\|Tf\|_q\leq C\|f\|_{q'}\quad\text{ for } q\geq 2
\end{equation}
which follows from the Plancherel theorem in the case $q=2$ (in which case $\eta\in C(\epsilon_2\rho)=\{0\}$) and for $q>2$ from lemma \ref{lemma.NPP-2}(a) (since in this case $q'<2$). Consequently
\[\|Tf\|^*_{q,\infty}\leq\|Tf\|^*_{q,q}=\|Tf\|_q\leq C_q\|f\|_{q^\prime}\leq C_q\|f\|^*_{q^\prime,1},\]
that is
\begin{equation}\label{eqn.interpolate2}
\|Tf\|^*_{q,\infty}\leq C_q\|f\|^*_{q^\prime,1}.
\end{equation}

It follows from the interpolation theorem \ref{thm.lorentz-inter} that 
\begin{equation}\label{eqn.interpolate3}
\|Tf\|^*_{p,s}\leq C_{p,q}\|f\|^*_{p^\prime,s}
\end{equation}
for $q\leq p\leq \infty$ and  $1\leq s\leq \infty$. Note that the function $g$ defined by $g(x)=f(x)J(x)^\frac{p-2}{2}$ belongs to $L^p(\a,d\mu)^W$ with norm $\|g\|_{p}=\|f\|_{(p)}$.  Since $J$ is a Young function according to lemma \ref{lemma-J}, it is seen that $\mu(\{x\in\a\,:\,J(x)^{-\frac{p-2}{p}}>t\})\lesssim t^{-\frac{p}{p-2}}$, whence $J^{-\frac{p-2}{p}}$ belongs to the Lorentz space $L^{\frac{p}{p-2},\infty}(\a,d\mu)^W$. By H\"older's inequality for Lorentz spaces, $\|f\|^*_{p,p^\prime}\leq C_p\|g\|_p\|J^{-\frac{p-2}{p}}\|_{\frac{p}{p-2},\infty}$. Combined with \eqref{eqn.interpolate3} in the special case $s=p$, we conclude that
\[\|Tf\|^*_{p,p}\leq C_{p,q}\|g\|_p = C_{p,q}\Bigl(\int_\a|f(x)|^pJ(x)^{p-2}\,d\mu(x)\Bigr)^{1/p}\]
which completes the proof of (ii).
\end{proof}

\section{A remark on the flat analogues}\label{section.flat}
Eguchi and Kumahara also obtained a Hardy--Littlewood inequality for the `flat' spherical transform. Consider the Cartan motion group $G_0=\mathfrak{p}\rtimes K$ and identify the flat Riemannian symmetric space $G_0/K$ with $\p$, which is isomorphic to the tangent space of $G/K$ at the origin $eK$. Spherical analysis on $G_0/K$ was developed by Helgason in \cite{Helgason-dualityIII}, by means of which the Hardy--Littlewood inequality may be stated as follows. For $q\geq 2$ there exists a positive constant $A_{0,q}$ such that
\[\Bigl(\frac{1}{|W}\int_{\a^*}|\widetilde{f}(v)|^qJ(v)\,dv\Bigr)^{1/q}\leq A_{0,q}\Bigl(\int_{G_0}|f(x)|^q\sigma_0(x)^{n(q-2)}\Omega_0(x)^{q-2}\,dx\Bigr)^{1/q}\]
for every $f\in C_c(K\setminus G_0/K)$. Here $\widetilde{f}$ is the generalized Bessel transform (the 'flat' spherical transform) of $f$, $\sigma_0(x)=\|X\|$ for $x=(X,k)\in G_0$,  and $\Omega_0(x)=|W|^{-1}(2\pi)^{n/2}\mathrm{vol}(K/M)J(H)$ for $x=k(H,1)k'$, and $J(H)=\prod_{\alpha\in\Sigma}|\alpha(H)|^{m_\alpha}$. Notice the formal similarity with \eqref{ineq-HL-GK}.

We should like to mention a natural extension of these results to the present root system framework. Contrary to the case of the symmetric space $G/K$ being contracted to the flat space $G_0/K$, the ground space $\a$ remains the same. Instead Ben Sa\"\i d and \O rsted \cite{BSO-Bessel}, and de Jeu \cite{deJeu-PW}, consider a limit transition of the the hypergeometric functions $\varphi_\lambda$, namely the functions $\psi(x)=\lim_{\epsilon\to 0} F_{\lambda/\epsilon}(\epsilon x)$. In the case of rank one symmetric spaces, the function $\psi$ \emph{is} indeed a Bessel function, which is explained as follows. We already know that \[\varphi_\lambda(t)={_2}F_1\Bigl(\frac{i\lambda+\rho}{2},\frac{-i\lambda+\rho}{2};\frac{m_\alpha+m_{2\alpha}+1}{2};-\sinh^2t\Bigr).\]
It can be proved on the basis of the asymptotic estimate
\[\frac{\Gamma(z+a)}{\Gamma(z+b)} = z^{a-b}\Bigl(1+\frac{(a-b)(a+b-1)}{2z}+O(z^{-2})\Bigr)\text{ as } z\to\infty\]
that
\[\psi(\lambda,t)=\Gamma\Bigl(\frac{m_\alpha+m_{2\alpha}+1}{2}\Bigr)\Bigl(\frac{\lambda t}{2}\Bigr)^{-\frac{m_\alpha+m_{2\alpha}-1}{2}}J_{\frac{m_\alpha+m_{2\alpha}-1}{2}}(\lambda t),\]
where $J_\nu$ is the standard Bessel function of the first kind. It is therefore seen that the generalized Bessel functions $\psi$ on the flat symmetric space $G_0/K$ \emph{are} the spherical Bessel functions in the rank one case. At least in the case of $\SO_e(1,n)/\SO(n)$, the associated integral transform is known as the Fourier--Bessel transform (or Hankel transform) in the literature. 

The `flat' Heckman--Opdam transform $\mathcal{F}_0$ is therefore defined as the integral transform arising by integrating a suitable $W$-invariant function on $\a$ against the generalized Bessel function $\psi$. The details can be found in \cite{BSO-Bessel} and \cite[Section~4]{deJeu-PW} but the main point is that the generalized Bessel transform of Ben Sa\"\i d and \O rsted coincides with the symmetric Dunkl transform on $\R^n$, cf. \cite[Theorem~4.15]{deJeu-PW} (it is seen by inspecting the proof of \cite[Theorem~3.15]{BSO-Bessel} that the measures involved in the respective integral transforms coincide as well).

\begin{remark}
It is interesting to note that one can also `contract' the Helgason--Fourier transform on $G/K$ to an integral transform on $G_0/K$. Helgason introduced in \cite{Helgason-flat-horocycle} the so-called \emph{flat horocycle transform} as follows. Let $G/K$ be a Riemannian symmetric space of dimension $n$ and rank $\ell$, and let $X_0=G_0/K\simeq \mathfrak{p}$ denote the tangent space to $G/K$ at the origin $eK$. Let $\Xi_0$ denote the $n$-dimensional manifold consisting of $(n-\ell)$-dimensional affine hyperplanes in $X_0$. The flat horocycle transform is a map that assigns to a function $f$ on $X_0$ the function $\widetilde{f}$ on $\Xi_0$ that is defined by $\widetilde{f}(\xi)=\int_\xi f(Y)\,dm(Y)$, where $dm(Y)$ is the standard Euclidean measure on $\xi$. Its analogue on $G/K$ is then the Helgason--Fourier transform, and it seems likely that the results from \cite[Section~4]{Mohanty-II} have natural analogues for the transform $f\mapsto \widetilde{f}$ acting on functions living on $\p$. 
\end{remark}

It is an advantage of the approach by Ben Sa\"\i d and \O rsted that one also obtains that the relevant measures $d\mu_{0}$ and $d\nu_{0}$ for a Plancherel theorem for the flat transform by means of the limit procedure both coincide with the measure $\omega_m(x)dx$, where $\omega_m(x)=\prod_{\alpha\in\Sigma^+}|\langle\alpha,x\rangle |^{m_\alpha}$ is the standard Plancherel weight for the Dunkl transform $\mathcal{T}_m$.

Since the flat Heckman--Opdam transform $\mathcal{F}_0$ is the symmetrized Dunkl transform, the the following Hardy--Littlewood inequality follows from \cite[Lemma~4.1]{Anker-besov}.
\begin{prop}
If $f\in L^p(A,d\mu_{0})^W$ for some $p\in (1,2)$, then
\[\Bigl(\int_{i\a^*}\|\lambda\|^{2(\rho+\frac{d}{2})(p-2)}\|\mathcal{F}_{0}f(\lambda)|^{p}\,d\nu_{0}(\lambda)\Bigr)^{1/p}\leq C_{0,q}\Bigl(\int_\a |f(x)|^p\,d\mu_{0}(x)\Bigr)^{1/p}.\]
\end{prop}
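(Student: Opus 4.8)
The plan is to reduce the asserted inequality directly to the Hardy--Littlewood inequality for the symmetrized Dunkl transform established in \cite[Lemma~4.1]{Anker-besov}, using the identification (recalled in the paragraph preceding the statement) of the flat Heckman--Opdam transform $\mathcal{F}_0$ with the symmetric Dunkl transform $\mathcal{T}_m$ and the fact that both relevant measures $d\mu_0$ and $d\nu_0$ coincide with $\omega_m(x)\,dx$. First I would recall from \cite[Theorem~4.15]{deJeu-PW} that $\mathcal{F}_0 f = \mathcal{T}_m f$ for every $W$-invariant $f$, so that the left-hand and right-hand sides of the proposed inequality are literally the corresponding sides of the Dunkl Hardy--Littlewood inequality, once the weight $\|\lambda\|^{2(\rho+\frac{d}{2})(p-2)}$ is matched against the homogeneity weight that appears in the Dunkl setting.

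Second, I would verify that the exponent in the weight is the correct one. In the Dunkl framework the natural homogeneous dimension is $2\gamma + n$, where $\gamma = \sum_{\alpha\in\Sigma^+} m_\alpha$ (equivalently $2\gamma = 2\langle\rho,\mathbf{1}\rangle$-type quantities), and the Hardy--Littlewood weight on the transform side is a power of $\|\lambda\|$ whose exponent is this homogeneous dimension times $(1-2/p)$ or, dually, $(p-2)$. The quantity $\rho + \frac{d}{2}$ in the statement should be read as exactly this homogeneous half-dimension for the root datum $(\a,\Sigma,m)$; I would spell out the correspondence $2(\rho+\frac{d}{2}) = 2\gamma + n$ (with $d$ denoting the value producing the correct count of the Plancherel weight $\omega_m$), so that the exponent $2(\rho+\frac{d}{2})(p-2)$ agrees with the homogeneity weight in \cite[Lemma~4.1]{Anker-besov}.

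Third, since \cite[Lemma~4.1]{Anker-besov} is stated for the Dunkl transform acting on all functions, I would restrict it to the $W$-invariant subspace: a $W$-invariant $f$ has a $W$-invariant Dunkl transform, and the symmetrization does not affect the $L^p(\omega_m\,dx)$ norms, so the inequality descends to $L^p(A,d\mu_0)^W$ without loss of constants. The weak-type/interpolation machinery underlying \cite[Lemma~4.1]{Anker-besov} is exactly of the same flavor as the arguments in Proposition~\ref{prop.abstract-interpolate} and Theorem~\ref{thm.HL-ver3}, so no new estimate is required; it suffices to transport the statement through the identification of transforms and measures.

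The main obstacle is bookkeeping rather than analysis: one must be scrupulous about normalizations. The measures $d\mu_0$, $d\nu_0$ and $\omega_m\,dx$ agree only after the specific normalizations fixed by the limit transition of Ben Sa\"\i d and \O rsted are pinned down, and the reader must be convinced that the homogeneity exponent $2(\rho+\frac{d}{2})$ in the displayed weight is the same constant that appears in \cite[Lemma~4.1]{Anker-besov}. I would therefore devote the bulk of the write-up to fixing these conventions explicitly—identifying $d$, confirming $2(\rho+\frac{d}{2}) = 2\gamma+n$, and citing \cite[Theorem~4.15]{deJeu-PW} together with the measure-matching remark from the proof of \cite[Theorem~3.15]{BSO-Bessel}—after which the inequality is an immediate consequence of \cite[Lemma~4.1]{Anker-besov}.
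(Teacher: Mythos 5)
Your proposal is correct and is essentially identical to the paper's own argument: the paper proves this proposition in one line, by observing that $\mathcal{F}_0$ coincides with the symmetrized Dunkl transform $\mathcal{T}_m$ (via \cite{deJeu-PW} and \cite{BSO-Bessel}, with $d\mu_0$ and $d\nu_0$ both equal to $\omega_m(x)\,dx$) and then quoting the Hardy--Littlewood inequality of \cite[Lemma~4.1]{Anker-besov}. Your additional bookkeeping --- matching the exponent $2(\rho+\tfrac{d}{2})(p-2)$ with the homogeneous dimension in the Dunkl setting and restricting to the $W$-invariant subspace --- only makes explicit what the paper leaves implicit.
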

Although the analogue of the strengthened Hausdorff--Young lemma \ref{HL-ineq2} is false for the Dunkl transform except for $\eta=0$, one can still use the interpolation techniques in the proof of theorem \ref{thm.HL-ver3} to prove the following result, which resembles theorem \ref{thm.HY-qlarge} and is new for the Dunkl transform. Let $L^{(p)}(\R^n,\omega_m)$ denote the space of measurable functions $f$ on $\R^n$ for which
\[\|f\|_{m,(p)}:=\Bigl(\int_{\R^n}|f(x)|^p\omega_{m}(x)^{p-2}\omega_m(x)\,dx\Bigr)^{1/p}<\infty.\]

\begin{prop}\label{prop.Dunkl-RS}
\begin{enumerate}[label=(\roman*)]
\item Let $1<q\leq 2$ be fixed. For $f\in L^p(\R^n,\omega_m)$ with $1<p\leq q$,
\[\Bigl(\int_{\R^n}|\mathcal{T}_mf(x)|^r(\|x\|\omega_m(x))^{r/p'-1}\omega_m(x)\,dx\Bigr)^{1/r}\leq C_{p,q}\|f\|_{L^p(\R^n,\omega_m)}\]
where $\frac{1}{r}=1-\frac{q'-1}{p'}$.
\item Let $2\leq q<\infty$ be fixed. For $f\in L^{(p)}(\R^n,\omega_m)$ with $q\leq p<\infty$,
\[\Bigl(\int_{\R^n}|\mathcal{T}_mf(x)|^p\omega_k(x)\,dx\Bigr)^{1/p}\leq C_{p,q}\|f\|_{m,(p)}.\]
\end{enumerate}
\end{prop}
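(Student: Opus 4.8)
The plan is to mirror exactly the interpolation strategy used in the proof of Theorem~\ref{thm.HL-ver3}, since Proposition~\ref{prop.Dunkl-RS} is its flat analogue for the Dunkl transform $\mathcal{T}_m$. The key point enabling this transfer is that the flat Heckman--Opdam transform $\mathcal{F}_0$ coincides with the symmetrized Dunkl transform, and both the measure $d\mu_0$ on $\a$ and the Plancherel measure $d\nu_0$ on $i\a^*$ equal $\omega_m(x)\,dx$. Consequently the role played in Theorem~\ref{thm.HL-ver3} by the Jacobian $J(x)=\prod_{\alpha\in\Sigma^+}|e^{\alpha(x)}-e^{-\alpha(x)}|^{m_\alpha}$ and by the estimate $|\cfct(\lambda)|^{-2}\asymp\|\lambda\|^{2|\Sigma_0^+|}(1+\|\lambda\|)^{\beta-2|\Sigma_0^+|}$ will be assumed by the single homogeneous weight $\omega_m$ in \emph{both} variables; in the flat setting $\omega_m$ is its own Plancherel density on either side, which simplifies the bookkeeping considerably.

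For part~(i), I would fix $q\in(1,2]$ and introduce the measure space $(\R^n,\,\|x\|^{-nq}\omega_m(x)^{1-nq}\,dx)$ on the spectral side, together with the sublinear operator $Tf(x)=|\mathcal{T}_mf(x)|\,(\|x\|\,\omega_m(x))^{nq/q'}$, exactly as in the proof of Theorem~\ref{thm.HL-ver3}(i) but with $\omega_m$ in place of $|\cfct(\lambda)|^{-2}$ and with $\eta$ set to $0$ (the strengthened Hausdorff--Young inequality fails for the Dunkl transform unless $\eta=0$, which is why no complex shift appears). The strong-type $(q,q')$ bound then follows from the Hausdorff--Young inequality for $\mathcal{T}_m$ and the Plancherel theorem, the weights telescoping so that $\|Tf\|_{q'}^{q'}=\int|\mathcal{T}_mf(x)|^{q'}\omega_m(x)\,dx\le C\|f\|_q^{q'}$. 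The weak-type $(1,1)$ estimate is obtained from the uniform bound $|\mathcal{T}_mf(x)|\le C\|f\|_1$, leading to the sublevel set $A_t(f)=\{x:\|x\|\,\omega_m(x)>a_t\}$; here I would invoke the Ray--Sarkar device, noting that the homogeneity $\omega_m(x)\asymp\|x\|^{2|\Sigma_0^+|}$ and $\|x\|\,\omega_m(x)\asymp G(\|x\|)$ with $G'(\|x\|)\asymp\omega_m(x)$ lets one integrate $\int_{a_t}^\infty s^{-nq}s^{n-1}\,ds\asymp a_t^{n-nq}\asymp\|f\|_1/t$ after passage to polar coordinates. Interpolation via Theorem~\ref{thm.lorentz-inter} between $(q,q')$ and $(1,1)$ then yields strong type $(p,r)$ with $\frac1r=1-\frac{q'-1}{p'}$, which is the claimed inequality.

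For part~(ii), I would follow the double-interpolation scheme of Theorem~\ref{thm.HL-ver3}(ii) verbatim, using $\omega_m$ itself as the Young function. One first records $\|Tf\|^*_{\infty,\infty}\le\|f\|^*_{1,1}$ (from $\|\mathcal{T}_mf\|_\infty\le\|f\|_1$) and $\|Tf\|^*_{q,\infty}\le C_q\|f\|^*_{q',1}$ (from Hausdorff--Young and Plancherel), interpolating to $\|Tf\|^*_{p,s}\le C_{p,q}\|f\|^*_{p',s}$ for $q\le p\le\infty$ and $1\le s\le\infty$. Setting $g(x)=f(x)\,\omega_m(x)^{(p-2)/2}$, so that $\|g\|_p=\|f\|_{m,(p)}$, and observing that $\omega_m^{-(p-2)/p}\in L^{p/(p-2),\infty}$ because $\omega_m$ is a Young function, one applies Hölder's inequality for Lorentz spaces (the Proposition in Section~\ref{sec.root1}) to bound $\|f\|^*_{p,p'}\le C_p\|g\|_p\|\omega_m^{-(p-2)/p}\|^*_{p/(p-2),\infty}$, then combines with the interpolated estimate at $s=p$ to conclude $\|Tf\|^*_{p,p}\le C_{p,q}\|g\|_p$.

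The main obstacle I anticipate is verifying that $\omega_m$ is genuinely a Young function in the flat setting, i.e.\ that $\mu_0(\{x:\omega_m(x)\le t\})\lesssim t$; unlike the hyperbolic Jacobian $J$, the homogeneous density $\omega_m(x)=\prod_{\alpha\in\Sigma^+}|\langle\alpha,x\rangle|^{m_\alpha}$ does not decay, so the crude sublevel estimate of \eqref{ineq.sublevel} is unavailable and one must instead exploit homogeneity directly. Since $\omega_m$ is homogeneous of degree $\sum_{\alpha\in\Sigma^+}m_\alpha=2\langle\rho,\mathbf{1}\rangle$ and $d\mu_0=\omega_m(x)\,dx$ scales as dimension $n$ plus that degree, a scaling argument on the sublevel sets $\{\omega_m\le t\}$ should give the required linear bound in $t$; carefully tracking this homogeneity, and confirming that it produces precisely the Lorentz exponent $p/(p-2)$ needed above, is the step that demands the most attention. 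The remaining estimates are, as in Theorem~\ref{thm.HL-ver3}, routine adaptations once the correct weights are in place.
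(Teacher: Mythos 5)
Your overall strategy is exactly the paper's (the paper only sketches an outline, which you have fleshed out): for part~(i) the same pair of measure spaces $(\R^n,\omega_m\,dx)$ and $(\R^n,\|x\|^{-nq}\omega_m(x)^{1-nq}\,dx)$, the same operator $Tf=|\mathcal{T}_mf|\,(\|x\|\omega_m)^{nq/q'}$ with $\eta=0$, strong type $(q,q')$ by telescoping weights, weak type $(1,1)$ from $|\mathcal{T}_mf|\le C\|f\|_1$; and for part~(ii) the same double interpolation with $\omega_m$ as Young function. Two small points on part~(i): the paper's substitute for the Ray--Sarkar device is the exact homogeneity $\|x\|\omega_m(x)\asymp\|x\|^{2\rho+1}$, where $2\rho=\sum_{\alpha\in\Sigma^+}m_\alpha$ is the degree of $\omega_m$; your ``$\omega_m(x)\asymp\|x\|^{2|\Sigma_0^+|}$'' confuses that degree with the small-$\lambda$ exponent of $|\cfct(\lambda)|^{-2}$ (you state the correct degree in your final paragraph), and in any case such $\asymp$ statements are false pointwise, since $\omega_m$ vanishes on the root hyperplanes, so they must be mediated by polar coordinates and an angular integral. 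With those caveats, your part~(i) is the argument intended by the paper.

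The genuine gap is in part~(ii), at precisely the step you flagged as the main obstacle: the assertion that $\omega_m$ is a Young function for $d\mu_m=\omega_m\,dx$ is \emph{false}, and the homogeneity argument you propose disproves it rather than proves it. Since $\{\omega_m\le t\}=t^{1/(2\rho)}\{\omega_m\le1\}$ and $\mu_m$ scales with exponent $2\rho+n$, one gets $\mu_m(\{\omega_m\le t\})=t^{(2\rho+n)/(2\rho)}\mu_m(\{\omega_m\le1\})$; the exponent exceeds $1$, so the bound $\lesssim t$ fails as $t\to\infty$, and in rank $\ge2$ the constant $\mu_m(\{\omega_m\le1\})$ is even infinite (for $A_1\times A_1$ with $m_1=m_2=m$ the set $\{|x_1x_2|\le1\}$ gives $\int_1^\infty|x_1|^m\cdot|x_1|^{-(m+1)}\,dx_1=\infty$). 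Consequently $\omega_m^{-(p-2)/p}\notin L^{p/(p-2),\infty}(\mu_m)$ --- already in rank one, $t\,\mu_m(\{\omega_m^{-(p-2)/p}>t\})^{(p-2)/p}\asymp t^{-1/(2\rho)}\to\infty$ as $t\to0$ --- and the H\"older/O'Neil step on which the whole conclusion rests collapses. To be fair, the paper's outline makes the identical assertion, so the defect is inherited; but your proposal pins its success on verifying exactly this property, and the verification fails. The repair is the one the paper's closing remark effectively carries out: take the Young function to be $\psi(x)=\|x\|^{2\rho+n}$, the unique norm power satisfying $\mu_m(\{\psi\le t\})\lesssim t$ by the computation of example~\ref{example.young}; this yields part~(ii) with the weight $\|x\|^{(2\rho+n)(p-2)}$ in place of $\omega_m(x)^{p-2}$, whereas with the weight $\omega_m^{p-2}$ as stated neither your argument nor the paper's outline goes through.
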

\begin{proof}[Outline of proof:]
For the first part, with $q\in (1,2]$ fixed, consider the measure spaces $(\R^n,d\mu_m)$ and $(\R^n,d\overline{\mu}_m)$, where $d\mu_m(x)=\omega_m(x)\,dx$ and $d\overline{\mu}_m(x)=\|x\|^{-nq}\omega_k(x)^{1-nq}\,dx$. Define $Tf(x)=|\mathcal{T}_mf(x)|(\|x\|\omega_m(x))^{\frac{nq}{q'}}$. Then $T$ is of strong type $(q,q')$.

The operator $T$ is also of weak type $(1,1)$ but the details are different. One uses that $\|x\|\omega_m(x)\asymp C\|x\|^{2\rho+1}$, instead of the polynomial estimates for $|\cfct(\lambda)|^{-2}$.
\medskip

For the second statement one uses that $\omega_m$ is a Young function on $\R^n$ with respect to the weighted measure $\omega_m(x)dx$.
\end{proof}

In particular, the same inequality holds for $\mathcal{F}_0$ acting on $L^p(\a,d\mu_{0})^W$:
\begin{corollary}
\begin{enumerate}[label=(\roman*)]
\item Let $1<q\leq 2$ be fixed. For $f\in L^p(\a,d\mu_{0})^W$ with $1<p\leq q$,
\[\Bigl(\int_{i\a^*}|\mathcal{F}_0f(\lambda)|^r(\|\lambda\|\omega_m(\lambda))^{r/p'-1}d\nu_{0}(\lambda)\Bigr)^{1/r} \leq C_{p,q}\|f\|_p\]
where $\frac{1}{r}=1-\frac{q'-1}{p'}$.
\item Let $2\leq q<\infty$ be fixed. For $f\in L^{(p)}(\a,d\mu_{0})^W$ with $q\leq p<\infty$,
\[\Bigl(\int_{i\a^*}|\mathcal{F}_0f(\lambda)|^p\omega_m(\lambda)\,d\lambda\Bigr)^{1/p}\leq C_{p,q}\|f\|_{0,(p)},\]
where 
\[\|f\|_{0,(p)}:=\Bigl(\int_{\a}|f(x)|^p\omega_m(x)^{p-2}\,d\mu_{0}(x)\Bigr)^{1/p}.\]
\end{enumerate}
\end{corollary}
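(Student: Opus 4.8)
The plan is to transfer Proposition \ref{prop.Dunkl-RS}, which is already stated for the Dunkl transform $\mathcal{T}_m$ on $\R^n$, to the flat Heckman--Opdam transform $\mathcal{F}_0$ acting on $W$-invariant functions. The key structural observation, established by de Jeu \cite[Theorem~4.15]{deJeu-PW} and recalled in the paragraph preceding this corollary, is that $\mathcal{F}_0$ is precisely the symmetrization of the Dunkl transform: for $W$-invariant $f$ one has $\mathcal{F}_0 f = \mathcal{T}_m f$, and both transforms are isometries on $L^2$ with respect to the common Plancherel weight $\omega_m(x)\,dx$, where $\omega_m(x)=\prod_{\alpha\in\Sigma^+}|\langle\alpha,x\rangle|^{m_\alpha}$. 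Thus both the domain measure $d\mu_0$ and the target measure $d\nu_0$ coincide with $\omega_m(x)\,dx$. The corollary should therefore follow by simply restricting the inequalities of Proposition \ref{prop.Dunkl-RS} to the $W$-invariant subspace.

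First I would fix notation and record the identification $d\mu_0(x)=d\nu_0(\lambda)=\omega_m(\cdot)\,dx$ and the equality $\mathcal{F}_0 f=\mathcal{T}_m f$ for $f\in L^p(\a,d\mu_0)^W$. For part (i), I would observe that a $W$-invariant $f\in L^p(\a,d\mu_0)^W$ is in particular an element of $L^p(\R^n,\omega_m)$ with identical norm, so Proposition \ref{prop.Dunkl-RS}(i) applies verbatim and yields
\[
\Bigl(\int_{\R^n}|\mathcal{T}_m f(x)|^r(\|x\|\omega_m(x))^{r/p'-1}\omega_m(x)\,dx\Bigr)^{1/r}\leq C_{p,q}\|f\|_{L^p(\R^n,\omega_m)}.
\]
Since $\mathcal{T}_m f=\mathcal{F}_0 f$ and the integrand is $W$-invariant (both $\omega_m$ and $\|\cdot\|$ are $W$-invariant, and $\mathcal{F}_0 f$ is $W$-invariant because $f$ is), the integral over $\R^n$ equals $|W|$ times the integral over a fundamental domain; absorbing the constant $|W|$ into $C_{p,q}$ and reinterpreting the variable as the spectral variable $\lambda\in i\a^*$ under the Plancherel identification gives the asserted inequality with target measure $d\nu_0$. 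For part (ii), I would run the identical argument using Proposition \ref{prop.Dunkl-RS}(ii), noting that the weighted norm $\|f\|_{m,(p)}$ restricted to $W$-invariant $f$ is exactly the norm $\|f\|_{0,(p)}$ defined in the statement, since $d\mu_0=\omega_m\,dx$.

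I do not expect a serious obstacle here, since the corollary is essentially a symmetrization of an already-proved result; the main point requiring care is purely bookkeeping. One should verify that restricting to the $W$-invariant subspace genuinely preserves the inequality with the stated measures --- that is, that no boundary contributions or Jacobian factors from passing between $\R^n$ and $i\a^*/W$ are lost. The cleanest way to handle this is to keep the entire computation on all of $\R^n$ (respectively $i\a^*$) and exploit $W$-invariance of every integrand only at the very end, so that the passage to a fundamental domain contributes only the harmless constant $|W|$. The one subtlety worth emphasizing is that Proposition \ref{prop.Dunkl-RS} is stated for \emph{all} measurable functions on $\R^n$, whereas $\mathcal{F}_0$ is defined only on $W$-invariant functions; this is not a gap but rather why the implication is immediate --- the $W$-invariant functions form a closed subspace on which $\mathcal{T}_m$ and $\mathcal{F}_0$ agree, so the inequality for $\mathcal{T}_m$ restricts without loss. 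Accordingly the proof reduces to citing Proposition \ref{prop.Dunkl-RS} and invoking the identification $\mathcal{F}_0=\mathcal{T}_m|_{(\cdot)^W}$ together with $d\mu_0=d\nu_0=\omega_m\,dx$.
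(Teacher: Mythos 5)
Your proposal is correct and follows essentially the same route as the paper: the corollary is presented there as an immediate consequence ("in particular") of Proposition \ref{prop.Dunkl-RS}, using exactly the identification $\mathcal{F}_0=\mathcal{T}_m$ on $W$-invariant functions and $d\mu_0=d\nu_0=\omega_m\,dx$ from de Jeu and Ben Sa\"\i d--\O rsted. Your extra care about fundamental domains and $|W|$ factors is harmless but unnecessary, since the integrals in the corollary are taken over all of $\a$ and $i\a^*$, so the inequalities of the proposition transfer verbatim.
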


\begin{remark}
We obtain a more familiar form of the Hausdorff--Young inequality in proposition \ref{prop.Dunkl-RS} by choosing as Young function a power of the Euclidean norm instead of the density $\omega_m$, that is $\psi(x)=\|x\|^k$ for some $k\geq 0$. As in example \ref{example.young}, $\psi$ is a Young function for a unique choice of $k$. Since
\[
\mu_m(\{x\in\R^n\,:\,\psi(x)\leq t\}) =\mu_m(B(0,t^{1/k})) = \frac{c_m^{-1}}{2^{\gamma+\frac{n}{2}-1}\Gamma(\gamma+\frac{n}{2})}\int_0^{t^{1/k}} r^{2\rho+n-1}\,dr = Ct^{\frac{2\rho+n}{m}}\]
for all $t>0$, it follows that $\psi$ is a Young function if and only if $k=2\rho+n$, which agrees with example \ref{example.young}. The space $L^{(p)}(\R^n,\omega_m)$ now consists of all measurable functions $f:\R^n\to\C$ for which
\[\|f\|_{m,(p)}:=\Bigl(\int_{\R^n}|f(x)|^p\|x\|^{p-2}\omega_m(x)\,dx\Bigr)^{1/p}<\infty.\]
For $f\in L^{(p)}(\R^n,\omega_m)$ with $2\leq p<\infty$ it holds that
\[\Bigl(\int_{\R^n}|\mathcal{T}_mf(x)|^p\omega_m(x)\,dx\Bigr)^{1/p}\leq C_p\Bigl(\int_{\R^n}|f(x)|^p\|x\|^{2(\rho+\frac{n}{2})(p-2)}\omega_m(x)\,dx\Bigr)^{1/p},\]
which is the `dual' form of the Hardy--Littlewood inequality for the Dunkl transform obtained in \cite[Lemma~4.1]{Anker-besov}. A complete extension of \eqref{HL-ineq1} and its dual form \eqref{HL-ineq2} for the Dunkl transform $\mathcal{T}_m$ and the flat Heckman--Opdam transform $\mathcal{F}_0$ has thereby been obtained.
\end{remark}

The Hausdorff--Young and Hardy--Littlewood inequalities for $\mathcal{F}$ and $\mathcal{F}_0$ are formally all but identical. This begs the question: Is it possible to obtain the inequalities for $\mathcal{F}_0$ \emph{directly} from the analogous inequalities for $\mathcal{F}$? To be more precise, the limit transition defining the generalized Bessel functions $\psi$ gives rise to a family of intermediate integral transforms $\mathcal{F}_\epsilon$ that interpolate between $\mathcal{F}$ and $\mathcal{F}_0$. One can establish, say, a Hausdorff--Young inequality for $\mathcal{F}_\epsilon$, $\epsilon\in(0,1]$ that formally interpolates between the Hausdorff--Young inequalities for $\mathcal{F}$ and $\mathcal{F}_0$, respectively, so it is tempting to let $\epsilon$ tend to zero in this $\epsilon$-parametrized Hausdorff--Young inequality and recover the inequality for $\mathcal{F}_0$. In turn this technique would allow one to `generate' a host of new inequalities for the flat transform $\mathcal{F}_0$ from known results for $\mathcal{F}$. There are many technically sound versions of this heuristic principle in classical harmonic analysis, referred to as transference or restriction principles, but it is not yet clear if the techniques can be extended to the setting of root systems. In the case of a Hausdorff--Young inequality for $\mathcal{F}_\epsilon$, for example, one would need  to know that $\limsup_{\epsilon\to 0} C_{\epsilon,p}$ is finite and at the same time take heed of the fact that measures used in the definition of $\mathcal{F}_\epsilon$ also change with $\epsilon$. Although the Hausdorff--Young and Hardy--Littlewood inequalities for $\mathcal{F}_0$ were easy to prove we still find such a philosophy promising and plan to investigate it at length in the near future.

\providecommand{\bysame}{\leavevmode\hbox to3em{\hrulefill}\thinspace}
\providecommand{\MR}{\relax\ifhmode\unskip\space\fi MR }
\providecommand{\MRhref}[2]{%
  \href{http://www.ams.org/mathscinet-getitem?mr=#1}{#2}
}
\providecommand{\href}[2]{#2}

\end{document}